\newtheorem{theorem}{Theorem}
\newtheorem{proposition}[theorem]{Proposition}
\newtheorem{lemma}[theorem]{Lemma}
\numberwithin{theorem}{section}
\numberwithin{equation}{section}
\newcommand{\esp}{\mathbf{L}}
\newcommand{\R}{\mathbb{R}}
\newcommand{\h}{\mathbf{H}}
\newcommand{\Rm}{\mathbb R^+}
\begin{document}
\title{On the Benjamin Ono equation in the half line}
\author[D. Cardona]{Duv\'an Cardona}
\address{
  Duv\'an Cardona:
  \endgraf
  Department of Mathematics: Analysis, Logic and Discrete Mathematics
  \endgraf
  Ghent University, Belgium
  \endgraf
  {\it E-mail address} {\rm duvanc306@gmail.com, duvan.cardonasanchez@ugent.be}
  }
  
  \author[L. Esquivel]{Liliana Esquivel}
\address{
 Liliana Esquivel:
  \endgraf
 Gran Sasso Science Institute, CP 67100
  \endgraf 
  L' Aquila, Italia.
  \endgraf
  {\it E-mail address} {\rm liliana.esquivel@gssi.it}
  }

\keywords{Benjamin Ono-equation, Calder\'on commutator}
\subjclass[2000]{35Q35, 35Q53}

\begin{abstract} We consider the inhomogeneous Dirichlet initial boundary value problem for the Benjamin-Ono equation formulated on the half line. We study the global in time existence of solutions to the initial–boundary value problem.   This work  is a continuation of  the ones \cite{hayashi2010benjamin,Neumann} by Hayashi and Kaikina where the global in time existence  and the asymptotic behaviour of solutions for large time were considered.
\end{abstract}
\maketitle

\tableofcontents
\allowdisplaybreaks

\section{Introduction}
\subsection{Outline} When studying dispersive problems on the half line, boundary terms provide a perturbation to the behaviour of their solutions and the analysis for these problems can be treated by using techniques of   complex analysis, namely,  methods of analytic continuation.  By following this philosophy, and by exploiting the Calder\'on commutator technique,
in this work we shall study the initial-boundary value problem (IBV problem) for the Generalized Dispersive Benjamin-Ono equation on the half line
\begin{equation} \textnormal{(BO)}:\label{nolineal} \begin{cases}u_{t}+\mathcal{H}u_{xx}+u\partial_{x}u=0,\ t>0, \ x>0, \\
u(x,0)=u_{0}(x),\\
u(0,t)=h(t),\ t > 0, & \textnormal{ } \end{cases}
\end{equation}
where $\mathcal H$ denotes the Hilbert transform\footnote{The Hilbert transform has a dispersive effect in the BO equation. Note that  the Hilbert  transform is a definite integral, which differs from an integral  term that appears in the model equation for  shallow-water waves (see e.g. Hirota and Satsuma\cite{HS}), and it makes the properties of  these solutions very different from those of  the well-known KdV type.}, which, as a (Calder\'on-Zygmund)  singular integral operator is defined via
\begin{equation}
    \mathcal{H}u(x):=\textnormal{PV}\int\limits_{0}^\infty\frac{u(y)}{y-x}dy,\,\,u\in C^{\infty}_0(\mathbf R^+).
\end{equation}
For a comprehensive study of this operator on half-line we refer to the interested reader to \cite{E Shamir}
Between the family of dispersive equations, the Benjamin-Ono equation is a  model describing long internal gravity waves in a stratified fluid with infinite depth (see for instance,  Benjamin \cite{Benjamin} and Ono \cite{Ono}) and  turns out to be important in other physical phenomena as well (we refer the reader to Danov and  Ruderman \cite{DaR},  Ishimori \cite{Is2} and Matsuno  Kaup \cite{MK} and references therein). In the case of the whole line, for the Benjamin-Ono model are well known very noticeable properties: it defines a Hamiltonian system, can be solved by an analogue of the inverse scattering method (see Ablowitz and  Fokas\cite{AF}), admits (multi-)soliton solutions, and satisfies infinitely many conserved quantities (see Case \cite{Ca}).
 It is worth to mention that, regarding the IVP associated to the BO equation (in the case of the whole line), local and global results have been obtained by various authors. Iorio \cite{Ioro} showed local well-posedness for data in $\h^s(\mathbb{R}),$ $s>\frac{3}{2}$, and making use of the conserved quantities he extended globally the result in $\h^s(\mathbb{R}),\,s\geq 2.$ In Ponce \cite{Po}, the author extended the local result for data in $\h^{\frac{3}{2}}(\mathbb{R})$ and the global result for any solution in $\h^s(\mathbb{R}),\,\,s\geq \frac{3}{2},$ and further improvements were done by Molinet, Saut, and Tzvetkov \cite{MST3}\footnote{Showing that the Picard iteration process cannot be carry out to prove local results for the BO equation in $\h^s(\mathbb{R}).$}, Koch and Tzvetkov\footnote{Who established a local result for data in $H^s(\mathbb{R}),\,s>\frac{5}{4},$ improving the one given in \cite{Po}. }, Jinibre and Velo\cite{JV},  Tao who showed in  \cite{To} that the IVP associated to the BO equation is globally well-posed in $\h^1(\mathbb{R}),$ Ionescu and Kenig \cite{IK1}, Molinet and Riboud \cite{MR1,MR2} and \cite{KPV11}, just to mention a few. 

\subsection{Benjamin Ono Equation in the Half-line}
In the case of the half-line, the BO equation and other non-linear evolution problems for pseudo-differential operators on the half-line have been considered by Esquivel, Hayashi and Kaikina \cite{esquivel2019inhomogeneou, esquivel2018}, Hayashi and Kaikina \cite{hayashi2010benjamin,Neumann,HK2004} and Kaikina \cite{Kaikina2005,Kaikina2007,kaikina2015capillary}. In the homogeneous case of  \eqref{nolineal}, with $h(t)\equiv 0,$ or when \eqref{nolineal} is endowed with the  Neumann condition $u_{x}(0,t)=0,$  it was proved, among other things, in  \cite{hayashi2010benjamin,Neumann}, the well-posedness for  \eqref{nolineal} if $\psi\in \mathbf L^{1,a}(\mathbf R^+)\cap \textbf H^{1}(\mathbf R^+) ,$ where $a\in (0,1).$ As far as we know, the case of nonhomogenous boundary condition for the initial boundary value problem \eqref{nolineal} was not studied
previously. The main problem addressed in this paper is to study the global in time existence of solutions to  \eqref{nolineal} in the case where the initial data $\psi$ belongs to $ \textbf H^{1+\epsilon}(\mathbf R^+)\cap  \mathbf L^{1,2}(\mathbf R^+)$ and the boundary condition $h(t)\in  \mathbf H^1(\mathbf R^+)\cap \mathbf{L}^1(\mathbf R^+). $ Under these conditions, in Theorem \ref{Main result} we prove that there exist an unique global solution $u$ of \eqref{nolineal} in the space
$ \mathbf{C}([0,\infty):\mathbf H^1(\mathbf R^+)\cap \mathbf L^{2,1}(\mathbf R^+)).$ In this result we observe the influence of the boundary data on
the behavior of solutions.

The novelty of the present work is that we combine different approaches between the real and the complex analysis. We start our work by applying the analytic continuation method by Hayashi and Kaikina (in the aforementioned references) related to the Riemann-Hilbert problem. Indeed, the construction of the Green operator is based on the introduction of a suitable necessary condition at
the singularity points of the symbol, the integral representation for the sectionally analytic function, and the theory of
singular integrodifferential equations with Hilbert kernels and with discontinues coefficients, (see \cite{Kaikina2007}, \cite{Gakov} and Section \ref{Sketch} for details).

Later on, via the contraction principle, in  Theorem \ref{Main result} we deduce of global existence of a solutions $u\in \mathbf H^1(\mathbf R^+)$ to \eqref{nolineal}. Finally, via an   analysis based also, in the Calder\'on commutator technique as developed by Ponce and Fonseca \cite{FG}, we prove that $u\in \mathbf{L}^{2,1}(\mathbf R^+)$.

This paper is organised as follows. In Section \ref{Notation} we present the notation used in our work and our main result in the form of Theorem \ref{Main result}.  For the benefit of the reader, in Section \ref{Sketch} we explain the techniques that we follow in the proof of our main theorem.  The linear problem associated to \eqref{nolineal} will be analysed in Section \ref{Lproblem} and some technical lemmata will be established in Section \ref{LemmasTecnicos}. We end our work with the proof of Theorem \ref{Main result} in Section \ref{TheProofofLili}. 

\section{Notation and Main result}\label{Notation}
\noindent To state our results precisely we introduce notations and function
spaces. Let $t\in \mathbb{R}.$ We denote by $\langle  t \rangle:=\sqrt{1+t^2}$ and  $\{t\}:=\frac{|t|}{\langle t \rangle }$. We write $A\lesssim B$ if there exist a constant $C,$ such that $C$ does not depend on fundamental quantities on $A$ and $B,$ such that $A\leq CB$.

We denote the usual direct and inverse Laplace transformation  by $%
\mathcal{L}$ and $\mathcal{L}^{-1}$%
\begin{equation*}
\mathcal{L}\phi (\xi)\equiv\widehat{\phi }\left( \xi \right) :=\int\limits_{{0}%
}^{\infty }e^{-x\xi}\phi \left( x\right) dx,\quad \mathcal{L}^{-1}\phi(x) =\frac{1}{2\pi
i}\int\limits_{i\mathbf{R}}e^{ix\xi }\widehat{\phi }\left( \xi \right) d\xi .
\end{equation*}
The Fourier transform $\mathcal{F}$ is defined as 
$$\mathcal{F}(\phi)(p):=\frac{1}{\sqrt{2\pi}}\int_{-\infty}^{\infty}e^{-ix p}\phi(x)dx.$$
Let $\Omega\subset \mathbb{R}$ be an interval, for $s\geq 0 $, we define
 $$\h^s(\Omega):=\{f=F|_{\Omega}:F\in \h^s(\R)\} \ \ \text{ and } \|f\|_{\h^s(\Omega)}=\inf \|F\|_{\h^{s}(\R)}. $$  Similarly, we have $$\dot{\h^s}(\Omega):=\{f=F|_{\Rm}:F\in \dot \h^s(\R)\} \ \ \text{ and } \|f\|_{\dot \h^s(\Omega)}=\inf\limits_{F} \|F\|_{\dot \h^{s}(\R)}. $$
At this point it is important to  recall  the space $\h^s_0(\R)$, which is the closure of the class of functions in $\h^s(\mathbb{R})$ whose support lies in $\Rm.$
Recall that $\h^s(\R)$ is the set of distributions $f$ satisfying $(1+|\xi|)^s\hat{f}(\xi)\in \esp^2_\xi$, where $\hat{f}$ denotes the Fourier transform in the $x$-variables. The space
$\dot{\h^s}(\R)$  is the homogeneous analogue consisting of distributions satisfying  $|\xi|^s\hat{f}(\xi)\in \esp^2_\xi$.  

On the other hand, if we define $$\mathbf W^{2,k}(\Omega)=\left\{f: \frac{d^mf}{dx^m}\in  \mathbf{L}^2(\Omega), 0\leq m\leq k \right\},$$ with the norm $$\|f\|_{\mathbf W^{2,k}(\Omega)}=\left( \sum_{0\leq m\leq k} \left\|\frac{d^mf}{dx^m} f\right\|_{\mathbf{L}^2(\Omega)}\right),$$ via Calderon's extension theorem (Theorem 12 in \cite{calderon1961lebesgue}) and  standard Calder\'on-Zygmund estimates one can deduce  $\h^k(\mathbb R)= \mathbf W^{2,k}(\mathbb R)$, for all $k$ non-negative integer, as consequence $\h^k(\Omega)= \mathbf W^{2,k}(\Omega)$.

For $s\in \mathbb R,$ we define $\mathbf Z^{s,r}(\Omega):=\mathbf H^s(\Omega)\cap\mathbf  {{ L}}^{2,r}(\Omega)$, here $\mathbf { L}^{2,r}(\Omega)$ is the  weighted Lebesgue space 
$$\mathbf{ L}^{2,r}(\Omega)=\{\phi:\|\phi\|_{\mathbf{ L}^{2,r}}:=\| |\cdot|^{r} \phi(\cdot)\|_{\mathbf L^2}<\infty\}.$$

Now we state our main result.

\begin{theorem}\label{Main result}
For $\psi\in \mathbf Z^{1+\epsilon,1}(\mathbb R^+)$ and $h\in \mathbf Z^{1,1} $, there exist a unique global solution 
$$u\in \mathbf{C}([0,\infty):\mathbf H^1(\mathbf R^+)\cap \mathbf L^{2,1}(\mathbf R^+)),$$
such that 
$$\sup_{t>0}\|u(t)\|_{\mathbf H^1}\lesssim M_1(\|u_0\|_{\mathbf{Z}},\|h\|_{\mathbf Y}),$$
$$\|u(t)\|_{\mathbf L^{2,1}}\lesssim M_2e^{M_3t}(\|u_0\|_{\mathbf{Z}},\|h\|_{\mathbf Y}).$$
\end{theorem}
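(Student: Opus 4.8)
The plan is to recast the IBV problem \eqref{nolineal} as a fixed-point equation built around the Green operator $\G$ of the associated linear inhomogeneous problem, whose mapping properties are established in Section \ref{Lproblem}. Writing the nonlinearity in divergence form $u\partial_x u=\tfrac12\partial_x(u^2)$, Duhamel's principle together with the integral representation of $\G$ furnishes the formula
\begin{equation*}
u(t)=\G(t)[\psi,h]-\tfrac12\int_0^t\G(t-\tau)\,\partial_x\big(u(\tau)^2\big)\,d\tau=:\Phi[u](t),
\end{equation*}
where $\G(t)[\psi,h]$ denotes the solution of the linear problem with data $(\psi,h)$. I would then seek the solution as a fixed point of $\Phi$ in a complete metric space of the form $\X_T=\mathbf C([0,T]:\h^1(\Rm))$ equipped with a suitable (possibly time-weighted) norm, on a closed ball whose radius is dictated by the size of the data $\|\psi\|_{\mathbf Z}$ and $\|h\|_{\mathbf Y}$.

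Two ingredients must close the fixed-point argument: (i) the linear estimates for $\G$ from Section \ref{Lproblem}, giving $\sup_t\|\G(t)[\psi,h]\|_{\h^1}\lesssim\|\psi\|_{\mathbf Z}+\|h\|_{\mathbf Y}$, and (ii) control of the Duhamel term. Since in one dimension $\h^1(\Rm)$ is a Banach algebra, one has $\|u^2\|_{\h^1}\lesssim\|u\|_{\h^1}^2$; the difficulty is the derivative $\partial_x$ in the nonlinearity, which costs a full derivative that $\h^1$ alone cannot absorb. I expect this is precisely why the datum is taken in $\h^{1+\epsilon}$: the dispersive smoothing of $\G$ together with the extra $\epsilon$ of regularity should recover the lost derivative, so that the technical lemmata of Section \ref{LemmasTecnicos} yield a bound of the shape
\begin{equation*}
\Big\|\int_0^t\G(t-\tau)\,\partial_x(u^2)\,d\tau\Big\|_{\h^1}\lesssim C(T)\,\sup_{\tau\le t}\|u(\tau)\|_{\h^1}^2,
\end{equation*}
with an analogous Lipschitz bound for the differences $\Phi[u]-\Phi[v]$. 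Choosing the radius and $T$ appropriately then makes $\Phi$ a contraction and produces a unique local solution.

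For the global extension, the uniform-in-time bound $\sup_t\|u(t)\|_{\h^1}\lesssim M_1$ is the continuation mechanism. I would derive it from an a priori estimate, either by exploiting the (formal) conservation structure of the BO equation and the decay properties of $\G$, or directly from the linear decay and smoothing estimates applied to the integral equation, closing a Gronwall-type inequality that does not blow up in finite time. Once the $\h^1$ norm is controlled uniformly, the local solution is iterated over successive intervals of fixed length to produce a global solution in $\mathbf C([0,\infty):\h^1(\Rm))$, with uniqueness propagating from the local uniqueness.

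The remaining, and in my view hardest, step is the weighted bound $u\in\mathbf L^{2,1}(\Rm)$, that is, the control of $\|\,x\,u(t)\|_{\esp^2}$. Applying the weight $x$ to the integral equation forces one to commute multiplication by $x$ through the dispersive part $\mathcal H\partial_x^2$ and through $\G$, and $[x,\mathcal H]$ is not bounded on $\esp^2$ by elementary means; this is exactly the Calder\'on commutator obstruction. I would invoke the Fonseca--Ponce commutator estimates \cite{FG} to bound the commutator term and the term coming from $\partial_x(u^2)$ in terms of the already-controlled $\h^1$ norm plus $\|\,x\,u\|_{\esp^2}$ itself, appearing only linearly rather than with a derivative. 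This produces a linear differential inequality for $t\mapsto\|\,x\,u(t)\|_{\esp^2}$ whose coefficient involves $M_1$, and Gronwall yields the exponential bound $\|u(t)\|_{\mathbf L^{2,1}}\lesssim M_2e^{M_3t}$. The main obstacles throughout are managing the single derivative loss in $\partial_x(u^2)$ against the marginal smoothing of $\G$ in the $\h^1$ estimate, and, independently, taming the Calder\'on commutator so that $\|\,x\,u\|_{\esp^2}$ enters the right-hand side of the weighted estimate in a controllable, linear fashion.
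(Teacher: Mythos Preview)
Your overall architecture---fixed point for the $\mathbf H^1$ solution, then Fonseca--Ponce commutator plus Gronwall for the weight---matches the paper. The weighted part in particular is essentially what the paper does (it splits $u=v+z$ with $z=\mathcal B(t)h$, multiplies the $v$-equation by $w_N^{2\theta}v$ with truncated weights, and controls the resulting $A_1,\dots,A_4,II$ exactly via the Calder\'on commutator bounds and Gronwall).

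Where you diverge is in the $\mathbf H^1$ step, and there the mechanism you propose is not the one that closes. You attribute the extra $\epsilon$ in $\mathbf H^{1+\epsilon}$ to ``recovering the lost derivative'' in $\partial_x(u^2)$ via smoothing of $\mathcal G$; in the paper the $\epsilon$ plays no role in the nonlinear estimate---it is needed only so that the zero extension $\mathbf 1_{(0,\infty)}\psi$ stays in $\mathbf H^{1+\epsilon}(\mathbb R)$ in the linear Lemma~\ref{Lemma 5.1}. The derivative in the nonlinearity is instead absorbed by the $\mathbf L^1\to\mathbf L^2$ time-decay of the Green operator,
\[
\|\partial_x^n\mathcal G(t)\phi\|_{\mathbf L^2}\lesssim \langle t\rangle^{-\frac12(n+\frac12)}\|\phi\|_{\mathbf L^1},
\]
so that $\|\mathcal N(u)\|_{\mathbf L^1}\le\|u\|_{\mathbf L^2}\|\partial_x u\|_{\mathbf L^2}$ suffices, with no regularity to spare. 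Second, the paper does \emph{not} run a local argument followed by continuation: it works directly on $[0,\infty)$ in the time-weighted space
\[
\|v\|_{\mathbf X}=\sup_{t\ge0}\bigl(\langle t\rangle^{1/4}\|v(t)\|_{\mathbf L^2}+\langle t\rangle^{3/4}\|\partial_x v(t)\|_{\mathbf L^2}\bigr),
\]
and the convolution estimate $\int_0^\infty\langle t-\tau\rangle^{-\frac14(2n+1)}\langle\tau\rangle^{-1}d\tau\lesssim\langle t\rangle^{-\frac14(2n+1)}$ (Lemma~\ref{desigualdad1}) makes the Duhamel term quadratically small in $\|v\|_{\mathbf X}$ globally, so the contraction itself yields the uniform bound $\sup_t\|u(t)\|_{\mathbf H^1}\lesssim M_1$. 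Your route of ``local existence then continuation via conservation laws'' is risky here: on the half line with inhomogeneous Dirichlet data the BO conserved quantities do not survive unmodified, and you have not indicated an alternative a~priori $\mathbf H^1$ bound. This is the genuine gap; once you replace your smoothing heuristic by the $\mathbf L^1\to\mathbf L^2$ decay estimate and the time-weighted norm, the rest of your outline is correct.
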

\section{Sketch of the proof}\label{Sketch}

For the convenience of the reader we briefly explain our strategy.
First of all, we consider the linear Benjamin-Ono equation with inhomogenous boundary condition
\begin{equation} \textnormal{(LBO)}:\label{lineal} \begin{cases}u_{t}+\mathcal{H}u_{xx}=0,\ t>0, \ x>0,\\
u(x,0)=\psi(x), \ x>0\\
u(0,t)=h(t),\ t > 0. & \textnormal{ } \end{cases}
\end{equation}
In Lemma \ref{propo lineal} we construct the
Green function and the Boundary operator for  Eq. \eqref{lineal}, indeed we prove that the solution  of  this equation can be represented as
\begin{equation*}
u=\mathcal{G}\left( t\right) \psi+\mathcal{B}(t)h,
\end{equation*}
where 
\begin{equation}
 \mathcal{G}(t)=\mathcal{G}_1(t)+\mathcal{G}_2(t), \label{Green}
\end{equation}
with 
\begin{equation}\label{G0}
\begin{array}{l}
\mathcal{G}_1(t)\psi:\displaystyle =  \frac{1}{2\pi i}\displaystyle\int\limits_{-i\infty }^{i\infty
} e^{-K(p) t} e^{px}\hat{u}_0(p)dp,\ \ 
\mathcal{G}_2(t)\psi:= \displaystyle\frac{1}{2 \pi }\int _0^\infty e^{-px} \mathcal K(t)\psi (p) dp ,
\end{array} 
\end{equation}
where 
\begin{equation}\label{4.3}
\begin{array}{l}
\displaystyle \tilde{\mathcal{K}}(t)\psi(p):=\frac{1}{2}e^{ip^2 t}\mathcal E^{-}(\psi)(p,i)+\frac{1}{2}e^{-ip^2t}\mathcal E^{-}(\psi)(p,-i) +\textbf{VP}  \int\limits_{-i\infty  }^{i\infty } e^{sp^2t} \frac{1}{1+s^2}\mathcal E^-(\psi)(p,s)ds,
\end{array}
\end{equation}
and
 \begin{equation}\label{E}
\mathcal E^{-}(\psi)(p,s):=\hspace{-0.5cm}\lim \limits_{\begin{array}{l}
\scriptstyle{s_1\to \varphi(s)}\\
\scriptstyle{\text{Re }s_1>0}
\end{array}}\frac{1}{2\pi i} \int_{-i\infty}^{i\infty}\widehat{\psi}(pw) \frac{e^{-\widetilde{\Gamma}(w,s)}-e^{-\widetilde{\Gamma}(0,s)} }{w-s_1}\Omega(w,s)
dw -e^{-\widetilde{\Gamma}(0,s)}\widehat{\psi}(\varphi(s)p)\Omega(\varphi(s),s),
 \end{equation}
 with 
\begin{equation}\label{f30}
\begin{array}{l}
\Omega(w,s)=\frac{1+\varphi(s)}{1+k(s)}e^{\widetilde{\Gamma}(-1,s)}\left( \frac{(1-\tilde{A}(s))k^2(s)}{1+k(s)\tilde{A}(s)} -\frac{w-k(s)}{w+ 1}\right) \\
 \tilde{A}(s)=\displaystyle\frac{s}{\pi}\int \limits_{\tilde{C}}\frac{c(q)}{(K(q)+s)(q^2+s)}dq+\frac{\varphi(s)-k(s)}{\varphi^2(s)}, \ c(q)=(1-i)\text{sign Im}(q),
\end{array}
\end{equation}
and the boundary operator is given by 
\begin{equation}\label{Boundary function}
\begin{array}{l}
\displaystyle  \mathcal{B}(t)h =\frac{1}{2\pi i}\int \limits_{0}^\infty e^{-px}p  (\widetilde{\mathcal B}(t)h)(p)dp=\int_0^t H(x,t-\tau)h(\tau)d\tau, 
\end{array}
\end{equation}
where 
\begin{equation}\label{Psi1}
\begin{array}{l}
\widetilde{\mathcal B}(t)h(p):=\displaystyle\left(e^{ip^2 t}\Psi_{B}(i)\hat{h}(ip^2)+e^{-ip^2 t}\Psi_{B}(-i)\hat{h}(-ip^2) +\frac{1}{ \pi i}
\textbf{ VP}\int\limits_{-i\infty }^{i\infty,
} e^{sp^2 t} \frac{\Psi_B(s)}{1+s^2}\hat{h}(sp^2)ds\right) \\
H(t)=\displaystyle\frac{1}{2\pi i}\int \limits_{0}^\infty e^{-px}p 
\displaystyle \left(e^{ip^2 t}\Psi_{B}(i)+e^{-ip^2 t}\Psi_{B}(-i) +\textbf{VP}\frac{1}{\pi}\int\limits_{-i\infty }^{i\infty
} e^{sp^2 t} \frac{\Psi_B(s)}{1+s^2}ds\right)dp\\
\displaystyle \displaystyle \Psi_{B}(s)=s\frac{1+k(s)}{1+k(s)\tilde{A}(s)}e^{\tilde{\Gamma}(-1,s)-\tilde{\Gamma}(0,s)}.
\end{array}
\end{equation}
Finally, the sectionally analytic function $\tilde{\Gamma}$ is given by 
\begin{equation}\label{gammatilde1}
\begin{array}{l}
\displaystyle\tilde{\Gamma}(w,\xi)=-\frac{1}{2\pi i}\int \limits _{\tilde{C}}\ln (q-w)d\ln\left[ \frac{K(q)+\xi}{\tilde{K}(q)+\xi}\right]dq.
\end{array}
\end{equation}
$$  \widetilde{C}=\left\{q=re^{i\theta}: r\in (0,\infty) \textnormal{ and } \theta=\pm \frac{3\pi}{8}\right\}.$$
Let $K(p):=-p|p|$, $\widetilde{K}(p):=-p^2$ and  let $\varphi(\xi)$, $k(\xi)$ be  the unique solutions of the equations $K(p)+\xi=0, \ \widetilde{K}(p)+\xi=0$ for Re $\xi>0$, with Re $\varphi(\xi)>0,$ Re $k(\xi)>0.$

In order to prove Lemma \ref{propo lineal} we reduce the linear problem \eqref{lineal} to the corresponding Riemann problem. This Riemann problem  has some additional necessary conditions for solvability due to the growth of the of the non-analytic non-homogeneous symbol $K(p)=-p|p|$.  Therefore, we will show below that exactly one
boundary value is  necessary and sufficient in the problem \eqref{nolineal} for its solvability and uniqueness. We call this procedure as the analytic
continuation method. Therefore, via the Duhamel principle, the 
IBV problem \eqref{nolineal} can be rewritten as the integral equation
\begin{equation*}
u=\mathcal M(u)=\mathcal G(t)u_0+\mathcal{B}(t)h
+\int\limits_{0}^{t}e^{-\tau K(p)}\mathcal G(t-\tau )u\partial_{x}u(\tau)d\tau.
\end{equation*}

For $1\leq r\leq s$, $\mu \in [0,1]$ such that $\frac{1}{s}\leq \frac{1}{r}+\mu $ and $n=0,1$, $\epsilon\in(0,\frac{1}{2})$ we prove the following estimates
\begin{equation*}
\begin{array}{l}
     \Vert\mathcal G(t)\psi\Vert_{\mathbf H^1}+
     \|\partial^n_x\mathcal{G}(t)\psi\|_{\mathbf L^{s}}\lesssim \Vert\psi\Vert_{\mathbf Z^{1+\epsilon,2}} +t^{-\frac{1}{2}(n+\frac{1}{r} +\mu-\frac{1}{s})}\|\psi\|_{\mathbf L^{r,\mu}},\\
     \\
     \Vert\mathcal B(t)h\Vert_{\mathbf H^1}+ \|\mathcal{B}(t)h\|_{\mathbf L^{2,\epsilon}(\mathbb R^+)} \lesssim \|h\|_{\mathbf Z^{1,1}}.
\end{array}
\end{equation*}
Applying these estimates we prove
that $\mathcal{M}$ is a contraction mapping on a ball $\mathbf{C}((0,\infty);\mathbf H^1(\mathbb R^+)).$

\section{Linear problem}\label{Lproblem}
We consider the linearised version of the problem, that  is the initial boundary value problem in \eqref{lineal}. We prove the following Lemma.

\begin{lemma}\label{propo lineal}
Suppose that the initial and boundary data $u_0$, $h$ belongs to $ \mathbf L^1(\mathbb R^+)$. Then, there exists a unique solution $u(x,t)$ of the initial-boundary value problem \eqref{lineal}, which has the following integral representation
$u(x,t)=\mathcal{G}(t)\psi+\mathcal{B}(t)h$, operators $\mathcal{G}(t)$ and $\mathcal{B}(t)$
was given by \eqref{Green}.
\end{lemma}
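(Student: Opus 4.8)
The plan is to reduce the linear IBV problem \eqref{lineal} to a Riemann boundary-value problem by means of a double Laplace transform, to solve that Riemann problem through an explicit factorization, and then to invert the transforms in order to recover the claimed representation $u=\mathcal{G}(t)\psi+\mathcal{B}(t)h$. First I would apply the Laplace transform in time, $\hat u(x,\xi)=\int_0^\infty e^{-\xi t}u(x,t)\,dt$; since $\widehat{u_t}=\xi\hat u-\psi$, the equation $u_t+\mathcal{H}u_{xx}=0$ becomes the stationary-in-$t$ problem $\mathcal{H}\hat u_{xx}(x,\xi)+\xi\hat u(x,\xi)=\psi(x)$ for $x>0$, subject to the transformed boundary condition $\hat u(0,\xi)=\hat h(\xi)$. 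Applying next the spatial Laplace transform $x\mapsto p$ and using that on the half line the operator $\mathcal{H}\partial_x^2$ carries the symbol $K(p)=-p|p|$, I would arrive at an algebraic relation of the form $(K(p)+\xi)\,\widehat{\hat u}(p,\xi)=F(p,\xi)$, where $F$ collects $\hat\psi$, the datum $\hat h(\xi)$, and the yet-unknown boundary traces produced by integration by parts.

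The central difficulty, and the reason a naive inversion fails, is that $K(p)=-p|p|$ is not an analytic function of $p$: it carries a jump across the inversion contour. The correct device is therefore to treat $\widehat{\hat u}(\cdot,\xi)$ as the boundary value of a function that is sectionally analytic off the ray system $\widetilde C$ and to impose the jump relation dictated by $K$. This converts the problem into a Riemann problem whose multiplicative coefficient is the ratio $\frac{K(q)+\xi}{\widetilde K(q)+\xi}$, where $\widetilde K(p)=-p^2$ is the entire ``companion'' symbol, and where $k(\xi)$ and $\varphi(\xi)$ are the right-half-plane roots of $\widetilde K(p)+\xi=0$ and $K(p)+\xi=0$, respectively. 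I would solve the homogeneous Riemann problem by the classical Gakhov construction, inserting the logarithm of the coefficient into a Cauchy-type integral over $\widetilde C$; this produces exactly the sectionally analytic factor $\widetilde\Gamma(w,\xi)$ of \eqref{gammatilde1}. The inhomogeneous problem is then resolved by the Sokhotski--Plemelj formulas, which is where the kernel $\mathcal{E}^-$ of \eqref{E} and the auxiliary functions $\Omega$, $\widetilde A$ and $\Psi_B$ of \eqref{f30} and \eqref{Psi1} arise.

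The step I expect to be the main obstacle is the analysis at the singular points and the determination of the index of the Riemann problem, which is what forces exactly one boundary condition. The coefficient $\frac{K(q)+\xi}{\widetilde K(q)+\xi}$ is discontinuous at the singular points of the symbol (the origin and the root $p=\varphi(\xi)$), and the growth of the non-analytic $K$ at infinity governs the number of solvability conditions. I would compute this index carefully using the theory of singular integro-differential equations with Hilbert kernels and discontinuous coefficients, following \cite{Kaikina2007,Gakov}, and show that it obliges precisely one scalar compatibility relation at the singularity. This is exactly the necessary and sufficient condition making the prescription of $u(0,t)=h(t)$ --- and nothing more --- admissible, and it simultaneously delivers uniqueness.

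Finally I would invert the two transforms. Inverting the spatial Laplace transform yields a representation of the form $\frac{1}{2\pi i}\int_0^\infty e^{-px}(\cdots)\,dp$, and inverting the temporal transform along $i\mathbb{R}$, deforming the contour onto the rays of $\widetilde C$, produces the oscillatory factors $e^{\pm ip^2t}$ and $e^{sp^2t}$ that appear in \eqref{4.3} and \eqref{Psi1}. Grouping the terms linear in $\psi$ gives $\mathcal{G}(t)\psi=\mathcal{G}_1(t)\psi+\mathcal{G}_2(t)\psi$ as in \eqref{Green} and \eqref{G0}, while the terms linear in $h$ assemble into $\mathcal{B}(t)h$ as in \eqref{Boundary function}. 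The hypothesis $\psi,h\in\mathbf L^1(\Rm)$ guarantees that $\hat\psi$ and $\hat h$ are continuous and bounded, so that all the contour integrals converge and define a genuine function; uniqueness then follows from the uniqueness of the Riemann problem's solution under the computed index together with the injectivity of the Laplace transform.
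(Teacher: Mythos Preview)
Your plan is essentially the paper's own ``analytic continuation method'': double Laplace transform, factorization of $\frac{K(p)+\xi}{\widetilde K(p)+\xi}$ via the Gakhov/Index-Zero construction to obtain the $Y^\pm$ (equivalently $\widetilde\Gamma$) functions, Sokhotski--Plemelj to split the data, and inversion to produce $\mathcal G_1+\mathcal G_2$ and $\mathcal B$. The only point to sharpen is the mechanism for ``exactly one boundary condition'': in the paper the Riemann problem is posed on the imaginary axis (not on $\widetilde C$, which enters only later when rewriting $\widetilde\Gamma$), its index is zero after multiplying by $\omega^-/\omega^+$, and the single compatibility relation \eqref{l3} comes not from the index but from requiring the factor $Y^-/(\widetilde K(p)+\xi)$ to stay analytic at the unique right-half-plane root $k(\xi)$ of $\widetilde K(p)+\xi=0$, which then determines $\widehat u_x(0,\xi)$ in terms of $\widehat h(\xi)$.
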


In order to prove Lemma \ref{propo lineal} we  recall  some basic results related with the analytic continuation method.

\begin{lemma}
 Let $\phi$ be a complex function, which obeys the H\"older condition for all finite $q,$ and tends to a definite limit  $\phi_{\infty}$ as $|q|\to \infty$, such that for large $q,$ the following inequality holds $|\phi(q)-\phi_\infty|\leq C|q|^{-\mu}$, $\mu>0$. Then 
\begin{equation}
\mathbb P{\phi} (z)=\frac{1}{2\pi i}\int\limits_{i\mathbb{R}}\frac{\phi (q)}{q-z}dq,\ \label{operator proy}
\textnormal{ Re }z\neq 0,
\end{equation}% 
 is an analytic  function  in the left and right semiplanes. Here and below these functions will be denoted $\mathbb P^+\phi(z)$ and $\mathbb P^-\phi(z)$, respectively. These functions have the limiting values $\mathbb P^+\phi(p)$ and $\mathbb P^-\phi(p)$ at all points of imaginary axis Re $p = 0$, on approaching the contour from the left and from the right, respectively. These limiting values are expressed by Sokhotzki- Plemelj formula:
 \begin{equation}\label{2.3a}
 \begin{array}{c}
 \displaystyle \mathbb P^+\phi(p)=\lim \limits_{z\to p, \textnormal{Re }z<0}\mathbb P\phi(z)=\frac{1}{2\pi i} PV\int\limits_{-i\infty}^{i\infty}\frac{\phi(q)}{q-p}dq +\frac{1}{2}\phi(p),\\
 \displaystyle \mathbb P^-\phi(p)=\lim \limits_{z\to p, \textnormal{Re }z<0}\mathbb P\phi(z)=\frac{1}{2\pi i} PV\int\limits_{-i\infty}^{i\infty}\frac{\phi(q)}{q-p}dq -\frac{1}{2}\phi(p).
 \end{array}
 \end{equation}
\end{lemma}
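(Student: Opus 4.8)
The plan is to prove this classical Sokhotski--Plemelj statement in three steps: (i) show that the Cauchy-type integral \eqref{operator proy} converges and is holomorphic on each open half-plane; (ii) evaluate the elementary kernel integral $\frac{1}{2\pi i}\int_{i\mathbb{R}}\frac{dq}{q-z}$, which supplies the jump $\pm\tfrac12$; and (iii) establish the boundary continuity of a regularized integral, which is the analytic heart of the statement. First I would address convergence and analyticity. Since $\phi$ only tends to a finite limit $\phi_\infty$, the kernel $1/(q-z)$ decays like $|q|^{-1}$ and the integral is not absolutely convergent as written, so I would split $\phi(q)=(\phi(q)-\phi_\infty)+\phi_\infty$ and write
\[
\mathbb{P}\phi(z)=\frac{1}{2\pi i}\int_{i\mathbb{R}}\frac{\phi(q)-\phi_\infty}{q-z}\,dq+\frac{\phi_\infty}{2\pi i}\int_{i\mathbb{R}}\frac{dq}{q-z}.
\]
The hypothesis $|\phi(q)-\phi_\infty|\le C|q|^{-\mu}$ makes the first integrand $O(|q|^{-1-\mu})$ for large $q$, hence the first integral is absolutely convergent; analyticity in $z$ on $\{\textnormal{Re}\,z\neq 0\}$ then follows by differentiating under the integral sign (locally uniform convergence of the differentiated integral, or equivalently Morera's theorem). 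The second integral is the elementary Cauchy kernel, handled next.

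Second, I would compute the kernel integral directly by parametrizing $q=it$. With $z=a+bi$, $a\neq 0$, the substitution $u=t-b$ reduces it to $\tfrac{1}{2\pi}\int_{\mathbb{R}}\frac{du}{-a+iu}$, whose imaginary part is odd and whose real part gives $-\tfrac12\,\textnormal{sign}(a)$, so that
\[
\frac{1}{2\pi i}\int_{i\mathbb{R}}\frac{dq}{q-z}=-\tfrac12\,\textnormal{sign}(\textnormal{Re}\,z)=\begin{cases}+\tfrac12,&\textnormal{Re}\,z<0,\\[2pt]-\tfrac12,&\textnormal{Re}\,z>0.\end{cases}
\]
Equivalently this is the residue computation on the contour closed by a large half-circle, where the non-vanishing arc contributes $i\pi$. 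With the orientation of $i\mathbb{R}$ from $-i\infty$ to $+i\infty$, the left half-plane $\{\textnormal{Re}\,z<0\}$ is the ``$+$'' side, which is exactly the source of the $\pm\tfrac12$ in \eqref{2.3a}.

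Finally, fixing a boundary point $p\in i\mathbb{R}$, I would subtract $\phi(p)$ and write, for $\textnormal{Re}\,z\neq 0$,
\[
\mathbb{P}\phi(z)=\frac{1}{2\pi i}\int_{i\mathbb{R}}\frac{\phi(q)-\phi(p)}{q-z}\,dq+\phi(p)\,\frac{1}{2\pi i}\int_{i\mathbb{R}}\frac{dq}{q-z},
\]
and denote the first term by $\Psi_p(z)$. By the previous step the second term tends to $\pm\tfrac12\,\phi(p)$ as $z\to p$ from the left/right. The remaining term $\Psi_p$ is where the real work lies, and I expect its boundary continuity to be the only genuinely delicate point: the main obstacle is to show that $\Psi_p(z)$ extends continuously up to $p$ from either side with boundary value $\frac{1}{2\pi i}\,PV\!\int_{i\mathbb{R}}\frac{\phi(q)-\phi(p)}{q-p}\,dq$. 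Here the H\"older condition is essential, since it gives $|\phi(q)-\phi(p)|\le C|q-p|^{\alpha}$ near $p$, making the limiting integrand $O(|q-p|^{\alpha-1})$ and integrable; splitting $i\mathbb{R}$ into a small arc around $p$ and its complement then lets me bound $\Psi_p(z)-\Psi_p(p)$ uniformly in the approach direction (dominated convergence on the regular part, a uniform H\"older estimate on the singular part). Once this is in hand, I would note that $PV\!\int_{i\mathbb{R}}\frac{dq}{q-p}=0$ by the symmetry of the kernel about $p$, so the boundary value of $\Psi_p$ equals $\frac{1}{2\pi i}\,PV\!\int_{i\mathbb{R}}\frac{\phi(q)}{q-p}\,dq$. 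Combining the two limits yields precisely the Sokhotski--Plemelj formulas \eqref{2.3a}.
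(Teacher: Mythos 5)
Your proposal is correct, but note that the paper itself contains no proof of this lemma: immediately after stating it (together with Lemmas \ref{SP lemma} and \ref{IZero}) the authors simply write that the proofs can be found in Gakhov's book \cite{Gakov}. What you have written is essentially a faithful reconstruction of that classical Gakhov--Muskhelishvili argument, transplanted from a closed contour to the line $i\mathbb{R}$: the subtraction of $\phi_\infty$ to handle the unbounded contour, the explicit kernel computation $\frac{1}{2\pi i}\int_{i\mathbb{R}}\frac{dq}{q-z}=-\frac12\,\mathrm{sign}(\mathrm{Re}\,z)$ (your sign bookkeeping is right: the left half-plane is the ``$+$'' side, matching \eqref{2.3a}, where incidentally the paper's second line carries a typo, ``$\mathrm{Re}\,z<0$'' instead of ``$\mathrm{Re}\,z>0$''), and the H\"older-based boundary continuity of the regularized integral $\Psi_p$, which you correctly identify as the only delicate step. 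Two small points deserve explicit mention in a full writeup. First, since $\phi_\infty$ need not vanish, each term of both of your decompositions converges only conditionally: the kernel integral and the integral defining $\mathbb{P}\phi$ itself must be read as symmetric limits $\lim_{R\to\infty}\int_{-iR}^{iR}$, and the same principal-value-at-infinity convention is implicitly used when you discard the odd imaginary part and when you assert $PV\int_{i\mathbb{R}}\frac{dq}{q-p}=0$; this is the standard convention in \cite{Gakov} but should be stated. Second, your uniform bound on the singular piece of $\Psi_p(z)-\Psi_p(p)$ is immediate for perpendicular or non-tangential approach (where $|q-z|\gtrsim|q-p|$ on the axis), which suffices for the paper's use of the lemma; the unrestricted limit requires the slightly finer estimate in Muskhelishvili/Gakhov. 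Neither point is a gap in the plan, only in the level of detail.
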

Subtracting and adding the formula \eqref{2.3a} we obtain the following two equivalent
 formulas
 \begin{equation}\label{2.4a}
 \begin{array}{c}
 \displaystyle \mathbb P^+\phi(p)-\mathbb P^-\phi(p)=\phi(p),\\
  \displaystyle \mathbb P^+\phi(p)+\mathbb P^-\phi(p)=\frac{1}{\pi i} \textnormal{PV}\int\limits_{-i\infty}^{i\infty}\frac{\phi(q)}{q-p}dq, 
 \end{array}
 \end{equation}
 which will be frequently used hereafter.
 
 \begin{lemma}\label{SP lemma}
 An arbitrary function $\phi$ given on the contour Re $p=0$, satisfying the H\"older
condition, can be uniquely represented in the form
\begin{equation}
\phi(p)=U^+(p)-U^-(p),
\end{equation}
where  $U^{\pm}(p)$ are the boundary values of the analytic functions  $U^{\pm}z$ and the condition $U^{\pm}_{\infty}=0$ holds.
These functions are determined by formula
$U(z)=\mathbb P{\phi} (z)$.
 \end{lemma}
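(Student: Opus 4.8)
The plan is to get existence directly from the Cauchy-type integral already at hand, and to get uniqueness from an analytic-continuation argument that ends with Liouville's theorem. Concretely, I would set $U(z):=\mathbb P\phi(z)$ and show that this single function furnishes the required decomposition, and then show that any competing decomposition differs from it by an entire function vanishing at infinity, hence by nothing.

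For existence I would invoke the preceding lemma. Since $\phi$ satisfies the H\"older condition (and decays to its limit at infinity in the ambient sense), the integral $\mathbb P\phi(z)=\frac{1}{2\pi i}\int_{i\mathbb R}\frac{\phi(q)}{q-z}\,dq$ defines a function analytic in each open half-plane $\mathrm{Re}\,z<0$ and $\mathrm{Re}\,z>0$, whose restrictions are $\mathbb P^+\phi$ and $\mathbb P^-\phi$ respectively. Setting $U^\pm(z):=\mathbb P^\pm\phi(z)$, the first Sokhotzki--Plemelj relation in \eqref{2.4a} yields exactly $U^+(p)-U^-(p)=\phi(p)$ on $\mathrm{Re}\,p=0$, which is the asserted representation. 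The normalization $U^\pm_\infty=0$ follows because $\frac{1}{q-z}\to 0$ uniformly on the relevant portion of the contour as $|z|\to\infty$, so dominated convergence (using the decay of $\phi$) forces $\mathbb P\phi(z)\to 0$. This shows that $U(z)=\mathbb P\phi(z)$ determines the functions as claimed.

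For uniqueness, suppose $\phi=U^+-U^-=V^+-V^-$ with both pairs boundary values of functions analytic in the left/right half-planes and vanishing at infinity. On the imaginary axis we then have $U^+(p)-V^+(p)=U^-(p)-V^-(p)$. The left side is the boundary value of $W^+:=U^+-V^+$, analytic for $\mathrm{Re}\,z<0$, and the right side is the boundary value of $W^-:=U^--V^-$, analytic for $\mathrm{Re}\,z>0$. Since both extend continuously to the line and agree there, Morera's theorem (equivalently, the analytic continuation principle across an arc) shows that $W^+$ and $W^-$ glue into a single entire function $W$. Because all four summands tend to $0$ at infinity, $W$ is an entire function vanishing at infinity, hence bounded; by Liouville's theorem $W\equiv 0$, so $U^\pm=V^\pm$.

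The main obstacle is precisely this gluing step: one must justify that the two half-plane functions combine into a genuinely entire function. This is where the H\"older hypothesis is essential, since it guarantees that the boundary values $U^\pm$ and $V^\pm$ are continuous up to the imaginary axis, so that the common boundary values of $W^+$ and $W^-$ match continuously and Morera's theorem applies across the line. The decay at infinity then makes the Liouville conclusion immediate, and everything else reduces to the Plemelj formulas \eqref{2.4a} already recorded.
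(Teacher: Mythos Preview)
Your proposal is correct and is precisely the classical argument. The paper does not prove this lemma at all; immediately after stating it (together with the Index Zero lemma) the authors simply write ``The proof of these Lemmas can be found in \cite{Gakov}.'' The proof you outline---existence via the Sokhotzki--Plemelj jump relation \eqref{2.4a} applied to $U(z)=\mathbb P\phi(z)$, and uniqueness by gluing $U^\pm-V^\pm$ across the imaginary axis into an entire function that vanishes at infinity and invoking Liouville---is exactly the standard argument from Gakhov's book, so there is nothing to compare.
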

 
 \begin{lemma}[Index Zero]\label{IZero}
An arbitrary function $\phi$  given on the contour Re $ p = 0$, satisfying the H\"older
condition, and having zero index,
\begin{equation}
\textnormal{ind }\phi(t)=\frac{1}{2\pi i}\int\limits_{-i\infty}^{i\infty} d\ln \phi(p)=0,
\end{equation}
is uniquely representable as the ratio of the functions $X^+(p)$ and $X^-(p)$, constituting the boundary values of functions, $X^+(z)$ and $X^-(z)$, both of them analytic functions in the left and right complex semiplane, and non-vanishing  on these domains. These functions are determined  by the formula
\begin{equation}
X^{\pm}(z)=e^{\Gamma^{\pm}(z)},  \ \ \Gamma(z)=\mathbb P(\ln \varphi)(z).
\end{equation}
 \end{lemma}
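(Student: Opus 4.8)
The plan is to reduce the multiplicative (Riemann–Hilbert) factorization to the additive decomposition already furnished by Lemma~\ref{SP lemma}, by passing to logarithms. The decisive role of the hypothesis $\textnormal{ind}\,\phi=0$ is to guarantee that $\ln\phi$ is a genuine single-valued function on the contour $\textnormal{Re}\,p=0$: since the index measures the total increment of $\arg\phi(p)$ (divided by $2\pi$) as $p$ traverses the imaginary axis, vanishing index means $\arg\phi$ returns to its initial value, so a continuous branch of $\ln\phi=\ln|\phi|+i\arg\phi$ exists and is unambiguous. Because $\phi$ satisfies the H\"older condition and, being of finite index, is bounded away from zero on the contour with a nonzero limit $\phi_\infty$ at infinity, the composition $\ln\phi$ again satisfies the H\"older condition and tends to the definite limit $\ln\phi_\infty$; thus $\ln\phi$ lies in the class to which Lemma~\ref{SP lemma} applies.

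With this in hand I would first apply Lemma~\ref{SP lemma} to $\ln\phi$ in place of $\phi$, obtaining the unique additive splitting
\begin{equation*}
\ln\phi(p)=\Gamma^+(p)-\Gamma^-(p),\qquad \Gamma(z)=\mathbb P(\ln\phi)(z),
\end{equation*}
where $\Gamma^\pm(z)$ are the boundary values of functions analytic in the left and right half-planes, normalized by $\Gamma^\pm_\infty=0$. Next I would exponentiate: setting $X^\pm(z):=e^{\Gamma^\pm(z)}$, the analyticity of $\Gamma^\pm$ in their half-planes together with the entirety of the exponential makes $X^\pm(z)$ analytic there, while $e^{(\cdot)}\neq 0$ forces $X^\pm$ to be non-vanishing. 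Passing to boundary values and invoking the first identity in \eqref{2.4a} for $\ln\phi$ gives
\begin{equation*}
\frac{X^+(p)}{X^-(p)}=e^{\Gamma^+(p)-\Gamma^-(p)}=e^{\ln\phi(p)}=\phi(p),
\end{equation*}
the desired factorization, with normalization $X^\pm_\infty=1$.

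For uniqueness, suppose $X_1^\pm$ and $X_2^\pm$ are two factorizations obeying the normalization at infinity. Then $X_1^+/X_2^+=X_1^-/X_2^-$ on the contour; the left-hand ratio is analytic and non-vanishing in the left half-plane, the right-hand ratio in the right half-plane, and they agree on $\textnormal{Re}\,p=0$, so together they define a single function analytic on the whole plane. Since each factor tends to $1$ at infinity, this entire function is bounded and tends to $1$, hence by Liouville's theorem is identically $1$, forcing $X_1^\pm\equiv X_2^\pm$. This matches the fact that the additive splitting of $\ln\phi$ in Lemma~\ref{SP lemma} is itself unique under $\Gamma^\pm_\infty=0$.

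The main obstacle, and really the only nontrivial point, is the first step: verifying that the zero-index hypothesis does deliver a single-valued, H\"older-continuous branch of $\ln\phi$ on an \emph{unbounded} contour. One must check the behaviour at the point at infinity, where the imaginary axis closes up on the Riemann sphere, so that the winding-number interpretation of the index integral $\frac{1}{2\pi i}\int_{-i\infty}^{i\infty} d\ln\phi$ is valid and no hidden jump of $2\pi i$ is introduced in $\arg\phi$; the uniform lower bound on $|\phi|$ coming from non-vanishing plus the limit $\phi_\infty\neq 0$ is exactly what secures the H\"older estimate for $\ln|\phi|$. Once $\ln\phi$ is admissible, the remainder is a direct transcription of Lemma~\ref{SP lemma} through the exponential.
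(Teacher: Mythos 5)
Your proof is correct and follows essentially the canonical route: the paper itself gives no proof of this lemma, deferring to Gakhov \cite{Gakov}, where the factorization is obtained exactly as you do --- passing to $\ln\phi$ (single-valued precisely because of the zero-index hypothesis), applying the additive decomposition of Lemma~\ref{SP lemma}, exponentiating to get $X^{\pm}=e^{\Gamma^{\pm}}$, and settling uniqueness by analytic continuation across the contour plus Liouville's theorem under the normalization $X^{\pm}_{\infty}=1$. Your closing remark correctly identifies the one delicate point on the unbounded contour (single-valuedness of the branch of $\ln\phi$ through the point at infinity, secured by $\phi_\infty\neq 0$ and the decay $|\phi(q)-\phi_\infty|\leq C|q|^{-\mu}$), so there is nothing to add.
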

The proof of these Lemmas can be found in \cite{Gakov}.

\begin{proof}[Proof of Lemma \ref{propo lineal}]
To derive an integral representation for the solution of the problem \eqref{lineal}, we adopt the analytic continuation
method proposed in  \cite{kaikina2015capillary}. We suppose that there exists a solution $u(x, t)$ for the problem \eqref{lineal}, such
that $u(x, t) = 0$, for all $x < 0.$
Note that the Laplace transform of $\mathcal H u_{xx}$ has the form 
$$\mathcal{L}(\mathcal H u_{xx})=\mathbb{P}^{-}\left\{K(p)\left(\hat u(p,t)-p^{-1}u(0,t)-p^{-2}u_x(0,t) \right) \right\},$$
where the operator $\mathbb P^{-}$ was defined by \eqref{operator proy} and $K(p)=-|p|p.$ Therefore, by applying the
Laplace transform with respect to both, the space and time variables in \eqref{lineal}, we obtain 
\begin{equation}\label{sl}
\widehat{\widehat{u}}\left( p,\xi \right) =\frac{1}{K\left( p\right) +\xi }%
\left[ \widehat{u}_{0}\left( p\right) +\frac{K\left( p\right) }{p}\widehat{u}%
\left( 0,\xi \right) +\frac{K\left( p\right) }{p^{2}}\widehat{u}_{x}\left( 0,\xi \right) +\widehat{\Psi }%
\left( p,\xi \right) \right] ,
\end{equation}
for some complex function $\Psi \left( p,\xi \right) =O(p^{-\delta }),\
\delta >0$, such that $\mathbb{P}^{-}\left\{ \Psi \left( p,\xi \right)
\right\} =0$.
Here, $\widehat{u}\left( 0,\xi \right) $ and $\widehat{u}%
_{x}\left( 0,\xi \right) $ are the Laplace transforms of the boundary data $u\left(
0,t\right) ,$ and $u_{x}\left( 0,t\right) $ respectively. Note that in \eqref{sl} we
have three unknown function $\widehat{u}\left( 0,\xi \right),$ $\widehat{u}%
_{x}\left( 0,\xi \right) $ and $\Psi \left( p,\xi \right),$ moreover the function that appears in the right part of this equality is not analytic when  $\textnormal{Re }p>a>0$. To solve these problems we need to introduce 
the “analyticity switching” functions $Y^\pm$.

Let us denote $ \widetilde{K}(p):=-p^{2}$. Note that, for Re $p > 0$ and Re $\xi > 0,$ the equality $\widetilde{K}(p) + \xi = 0$ has only
one root $k(\xi),$ such that $\textnormal{Re }k(\xi)>0$. We make a cut along  the negative real axis. We define 
$$\omega^+(p,\xi)=\left(\frac{p}{p-k(\xi)} \right)^{\frac{1}{2}}, \ \ \\ \ \omega^-(p,\xi)=\left(\frac{p}{p+k(\xi)} \right)^{\frac{1}{2}}.$$  
Since for Re $\xi>0,$ one has
$$\textnormal{ind}\frac{K(q)+\xi}{\tilde{K}(q)+\xi}\frac{\omega^-(q,\xi)}{\omega^+(q,\xi)}=\frac{1}{2\pi i}\int \limits_{ -i\infty}^{ i\infty}d\ln \left\{
\frac{K(q)+\xi}{\tilde{K}(q)+\xi}\frac{\omega^-(q,\xi)}{\omega^+(q,\xi)}\right\}=0,$$
 via Index Zero Lemma in the form of Lemma \ref{IZero}, we  have
\begin{equation}
\frac{K(p)+\xi }{\tilde{K}(p)+\xi }=\frac{Y^{+}(p,\xi )}{Y^{-}(p,\xi )},  \ \ p\in i \mathbb R,	
\label{fraccion}
\end{equation}
where%
\begin{equation}
Y^{\pm }=e^{\Gamma ^{\pm }(p,\xi )}w^{\pm }, \ \ \Gamma (z,\xi )=\mathbb {P}\ln\left\{ \frac{K(\cdot)+\xi }{\tilde{K}(\cdot)+\xi }\frac{\omega^-}{\omega^+}\right\}.
\label{sw}
\end{equation}
Via Lemma \ref{SP lemma} we have
\begin{equation}\label{l2}
\begin{array}{c}
U^{+}(p,\xi)-U^{-}(p,\xi)=\displaystyle \frac{\widehat{\psi}_0(p)}{Y^+(p,\xi)}, \ \  I^{+}(p,\xi)-I^{-}(p,\xi)=\frac{K(p)p^{-2}}{Y^+(p,\xi)},\\
J^{+}(p,\xi)-J^{-}(p,\xi)=\displaystyle\frac{K(p)p^{-1}}{Y^+(p,\xi)},
\end{array}
\end{equation}
where 
\begin{equation}\label{4.7}
\begin{array}{c}
U(z,\xi)=\displaystyle \mathbb{P}\left(\frac{\widehat{\psi}_0(p)}{Y^+(p,\xi)} \right), \ \    I(z,\xi)=\mathbb{P}\left( \frac{K(p)p^{-2}}{Y^+(p,\xi)}\right),\ \
J(z,\xi)=\displaystyle\mathbb{P}\left( \frac{K(p)p^{-1}}{Y^+(p,\xi)}\right),
\end{array}
\end{equation}
Applying \eqref{fraccion}, \eqref{l2} into \eqref{sl} we obtain for  $p\in i\mathbb{R}$ that
\begin{equation}
 \begin{array}{l}
\displaystyle \widehat{\widehat u}(p,\xi)=
\displaystyle\frac{Y^+(p,\xi)}{K(p)+\xi}\left[U^+-U^- +\frac{\widehat{\Phi}^+}{Y^+} +\left( J^{+}-J^{-}\right)\widehat{u}%
\left( 0,\xi \right) +\left( I^{+}-I^{-}\right)\widehat{u}_{x}\left( 0,\xi \right)\right].
 \end{array}
\end{equation}
By the analyticity of the function $\widehat{\widehat u}$ in the right half-plane, we must  put the following conditions
$$\frac{\widehat{\Phi}}{Y^+}+U^+ + J^{+}\widehat{u}(0,\xi)+I^{+}\widehat{u}_{x}(0,\xi)=0,$$ and
\begin{equation}\label{l3}
 U^-(k(\xi),\xi) +J^{-}(k(\xi),\xi)\widehat{u}\left( 0,\xi \right)+I^{-}(k(\xi),\xi)\widehat{u}_{x}\left( 0,\xi \right)=0,
\end{equation}
where $k(\xi)$ is the only one root of the equation $\widetilde{K}(p)+\xi=0$ in the right half complex plane. Because  $Y^+(p,\xi)$
is analytic for  $\textnormal{Re }p<0,$ we have $\mathbb P^{-}\{\Phi\}=0$, moreover from \eqref{l3} we observe that 
we need to put in the IBV problem, only one boundary data, and the other unknown boundary condition will be completely determined by this
equality.  Thus, if we consider the Dirichlet boundary condition, $u(0,t)=h(t),$ the other unknown boundary data
$\hat{u}_x(0,\xi)$ satisfies that
$$\hat{u}_x(0,\xi)=\frac{1}{I^{-}(k(\xi),\xi)}\left( U^-(k(\xi),\xi) +J^{-}(k(\xi),\xi)\widehat{h}\left(\xi \right) \right),$$ where $\hat{h}(\xi)$ is the Laplace transform of $h(t)$.
Finally, we obtain for  the solution of \eqref{lineal} that,
\begin{equation}\label{fin1}
\begin{array}{rl}
\hat{\hat{u}}(p,\xi )=&\displaystyle \frac{Y^{+}(p,\xi)}{K(p)+\xi }\left( \frac{I^{-}(p,\xi )}{I^{-}(k(\xi),\xi )}U^{-}(k(\xi ),\xi )-U^{-}(p,\xi) \right)\\
\\
&\displaystyle +\frac{Y^{+}(p,\xi)}{K(p)+\xi }\widehat{h}(\xi )\left( \frac{I^{-}(p,\xi )}{%
I^{-}(k(\xi ),\xi )}J^{-}(k(\xi ),\xi )-J^{-}(p,\xi )\right) . 
\end{array}
\end{equation}
 Applying $ \mathcal{L}^{-1}_x \mathcal L^{-1}_t$ to (\ref{fin1}), we get  
$u(x,t)=\mathcal{G}(t)\psi+\mathcal{B}(t)h,$
where%
\begin{equation}
\begin{array}{rl}\label{r.1.3}
\mathcal{G}(t)\psi&\displaystyle=\left(\frac{1}{2\pi i}\right)^{2}\int\limits_{-i\infty +\varepsilon }^{i\infty
+\varepsilon } e^{\xi t} \int\limits_{-i\infty }^{i\infty }e^{px}\frac{Y^{+}(p,\xi )}{K(p)+\xi }\\
&\displaystyle \hspace{5cm}\times\left( \frac{I^{-}(p,\xi )}{I^{-}(k(\xi ),\xi )}
U^{-}(k(\xi ),\xi,y )-U^{-}(p,\xi,y )\right) dp \ d\xi ,
\end{array}
\end{equation}
 and 
 \begin{equation}\label{r.1.4}
\mathcal{B}(t)h= \left(\frac{1}{2\pi i}\right)^{2}\int\limits_{-i\infty +\varepsilon }^{i\infty
+\varepsilon }e^{\xi t} \hat{h}(\xi) \int\limits_{-i\infty }^{i\infty }e^{px}\frac{Y^{+}(p,\xi )}{K(p)+\xi }%
\left(\frac{I^{-}(p,\xi )}{I^{-}(k(\xi ),\xi )}J^{-}(k(\xi
),\xi )- J^{-}(p,\xi )\right) dp \  d\xi. 
 \end{equation}
 Now, our goal is to find a  convenient expression for these operators. It follows from the Sokhotzki-Plemelj formula that $U^-(p,\xi)= U^+(p,\xi)-\frac{\widehat{\psi}_0(p)}{Y^+(p,\xi)}$, and also, via Fubini Theorem. By computing the residue at $\xi= -K(p),$  we obtain
\begin{equation*}
\begin{array}{rl}
\displaystyle \left(\frac{1}{2\pi i}\right)^{2} \int\limits_{-i\infty +\varepsilon }^{i\infty
+\varepsilon } e^{\xi t} \int\limits_{-i\infty }^{i\infty }e^{px}\frac{\widehat{\psi}(p)}{K(p)+\xi }dp \ d\xi& \displaystyle= \frac{1}{2\pi i} \int\limits_{-i\infty }^{i\infty
} e^{-K(p) t} e^{px}\hat{\psi}(p)dp.
\end{array}
\end{equation*}
Thus, by using these facts in \eqref{r.1.3} we have
\begin{equation}
\mathcal{G}(t)\psi = \mathcal{G}_1(t)\psi + \mathcal{G}(t)u_2, \label{r.1.5}
\end{equation}
where 
\begin{equation}\label{r.1.6.0}
\mathcal{G}_1(t)\psi:\displaystyle =  \frac{1}{2\pi i}\displaystyle\int\limits_{-i\infty }^{i\infty
} e^{-K(p) t} e^{px}\widehat{\psi}(p)dp,\\
\end{equation}
\begin{equation}\label{r.1.6}
\begin{array}{l}
\mathcal{G}_2(t)\psi:= \displaystyle \left(\frac{1}{2\pi i}\right)^{2}\int\limits_{-i\infty +\varepsilon }^{i\infty
+\varepsilon } e^{\xi t} \int\limits_{-i\infty }^{i\infty }e^{px}\frac{Y^{+}(p,\xi )}{K(p)+\xi }\\
\hspace{1cm}\displaystyle \hspace{5cm}\times\left( \frac{I^{-}(p,\xi )}{I^{-}(k(\xi ),\xi )}
U^{-}(k(\xi ),\xi,y )-U^{+}(p,\xi,y )\right) dp \ d\xi. 
\end{array}
\end{equation}
Now, by using some analytic properties we will rewrite this operator.  We consider an analytic extension of
$K(p)=-p|p|=-i\text{sign}(\text{Im }p)p^2,$  in the form
\begin{equation}\label{extention }
K(p)=\left\{ 
\begin{array}{cl}
-ip^2 ,&  \textnormal{Im }p>0,\\
\\
ip^2, & \textnormal{Im }p<0.
\end{array}\right.
\end{equation}
Using Cauchy Theorem  we get 
\begin{equation}\label{g1}
I^{-}(p,\xi)=-\frac{K(p)+\xi}{p^2}\frac{1}{Y^+(p,\xi)}+\frac{\xi}{p^2}\frac{1}{Y^+(0,\xi)}[1+p\tilde{A}(\xi)],
\end{equation}
where $\tilde{A}(\xi)=Y^+(0,\xi)\partial_w\frac{1}{Y^+(w,\xi)}|_{w=0}$. Because $K(k(\xi ))=-\xi,$ from the last identity we obtain
\begin{equation}\label{g2}
I^{-}(k(\xi),\xi)=\frac{\xi}{k^2(\xi)}\frac{1}{Y^+(0,\xi)}[1+k(\xi)\tilde{A}(\xi)].
\end{equation}
Therefore, via \eqref{g1} and \eqref{g2} we obtain
\begin{equation}\label{4.21}
\frac{I^{-}(p,\xi )}{I^{-}(k(\xi ),\xi )}=\frac{k^2(\xi)}{p^2}\frac{Y^+(0,\xi)}{\xi[1+k(\xi)\tilde{A}(\xi)]}\left[ -\frac{K(p)+\xi}{Y^+(p,\xi)}+\frac{\xi}{Y^+(0,\xi )}[1+p\tilde{A}(\xi)]\right].
\end{equation}
By substituting \eqref{4.21} into \eqref{r.1.6},  we obtain
\begin{equation} \label{SumaG}
\begin{array}{l}
\mathcal{G}_2(t)\psi=\displaystyle -\left( \frac{1}{2\pi i}\right)^2\int\limits_{-i\infty +\varepsilon }^{i\infty
+\varepsilon }d\xi e^{\xi t} \int\limits_{-i\infty }^{i\infty }e^{px}\frac{1}{K(p)+\xi }%
\left(Y^{+}(p,\xi )U^{+}(p,\xi )\right.\\
\hspace{1cm}\left. \displaystyle+\frac{k^2(\xi)}{ \xi}\frac{U^{-}(k(\xi ),\xi )}{(1+k(\xi)\tilde{A}(\xi))}\left[\frac{-K(p)Y^+(0,\xi)+\xi\left( (1+p\tilde{A}(\xi))Y^+(p,\xi)-Y^+(0,\xi) \right)}{p^2}\right] \right) dp.
\end{array}
\end{equation}
By observing that, for $\xi$ with Re  $\xi >0$, the function $\displaystyle p\mapsto \frac{1}{K(p)+\xi}$ is analytic in the left half-plane, except when $\{\textnormal{Re }p=0\},$  for an analytic function $\phi(p),$  the Cauchy Theorem 
implies
\begin{equation} \label{p1}
\frac{1}{2\pi i}\int \limits_{i\mathbb R}e^{px}\frac{\phi(p)}{K(p)+\xi}dp=-\frac{1}{\pi}\int_0^{\infty}e^{-px}\frac{p^2}{p^4+\xi^2}\phi(-p)dp,
\end{equation}
\begin{equation}\label{p2}
\frac{1}{2\pi i}\int \limits_{i\mathbb R}e^{px}\frac{K(p)}{K(p)+\xi}\frac{1}{p^2}dp=\frac{\xi}{\pi}\int\limits_0^{\infty}
e^{-px}\frac{1}{p^4+\xi^2}dp.
\end{equation}
Via \eqref{p1}, \eqref{p2} and Fubini theorem, we  have
\begin{equation}
\begin{array}{l}
\displaystyle \mathcal{G}_2(t)\psi=\frac{1}{2\pi^2 i}\int \limits_{0}^\infty e^{-px} \int\limits_{-i\infty +\varepsilon }^{i\infty
+\varepsilon } e^{\xi t} \frac{Y^+(-p,\xi)}{p^4+\xi^2}\\
\displaystyle \hspace{4cm}\times\left[ \frac{1-p\tilde{A}(\xi)}{1+k(\xi)\tilde{A}(\xi)} k^2(\xi)U^{-}(k(\xi ),\xi )-p^2U^+(-p,\xi)\right]d\xi\ dp.
\end{array}
\end{equation}
Now, let us simplify the function 
$$
\begin{array}{l}
\displaystyle G(p,t):=\int\limits_{-i\infty +\varepsilon }^{i\infty
+\varepsilon } e^{\xi t} \frac{Y^+(-p,\xi)}{p^4+\xi^2}
\left[ \frac{1-p\tilde{A}(\xi)}{1+k(\xi)\tilde{A}(\xi)} k^2(\xi)U^{-}(k(\xi ),\xi )-p^2U^+(-p,\xi)\right]d\xi\\
\hspace{1.4cm} \displaystyle = \frac{1}{2\pi i}\int\limits_{-i\infty +\varepsilon }^{i\infty
+\varepsilon } e^{\xi t} \frac{Y^+(-p,\xi)}{p^4+\xi^2} 

\int_{-i\infty}^{i\infty}\frac{\widehat{\psi}(q)}{Y^+(q,\xi)} 
\left( \frac{1-p\tilde{A}(\xi)}{1+k(\xi)\tilde{A}(\xi)} \frac{k^2(\xi)}{q-k(\xi)}-\frac{p^2}{q+p}\right) dq \ d\xi.
\end{array}
$$
For $p>0$, via Lemmas \ref{Lemma Gamma} and \ref{Lemma A}, we know $$\frac{Y^+(-p,sp^2)}{Y^+(q,sp^2)}=\frac{1+\varphi(s)}{1+k(s)}\frac{q-k(s)}{q-\varphi(s)}e^{\widetilde{\Gamma}(-1,s)-\widetilde{\Gamma}(q,s)},$$ and  $$p\widetilde{A}(sp^2)=\widetilde{A}(s),$$ and that $k(sp^2)=pk(s).$ Therefore, the change of variables $\xi=p^2 s,$ and $q=pw,$ imply
\begin{equation}\label{4.28 b}
\begin{array}{l}
\displaystyle G(p,t)= \displaystyle \frac{1}{2\pi i} \int\limits_{-i\infty +\varepsilon }^{i\infty
+\varepsilon } e^{sp^2t} \frac{1}{1+s^2}\frac{1+\varphi(s)}{1+k(s)}e^{\widetilde{\Gamma}(-1,s)}\\
\displaystyle \hspace{2cm}\times\int_{-i\infty}^{i\infty}\widehat{\psi}(pw) \frac{e^{-\widetilde{\Gamma}(w,s)} }{w-\varphi(s)}
\left( \frac{(1-\tilde{A}(s))k^2(s)}{1+k(s)\tilde{A}(s)} -\frac{w-k(s)}{w+ 1}\right) dw \ ds.
\end{array}
\end{equation}
Since $\widehat{\psi}(pw) $ is an analytic function in the right-half complex plane,  by Cauchy Theorem the following
equality
$$
\begin{array}{l}
\displaystyle \frac{1}{2\pi i}\int_{-i\infty}^{i\infty}\widehat{\psi}(pw) \frac{e^{-\widetilde{\Gamma}(0,s)} }{w-\varphi(s)}
\left( \frac{(1-\tilde{A}(s))k^2(s)}{1+k(s)\tilde{A}(s)} -\frac{w-k(s)}{w+ 1}\right) dw\\
\displaystyle \hspace{2cm}=-e^{-\widetilde{\Gamma}(0,s)}\widehat{\psi}(\varphi(s)p)\left( \frac{(1-\tilde{A}(s))k^2(s)}{1+k(s)\tilde{A}(s)} -\frac{\varphi(s)-k(s)}{\varphi(s)+ 1} \right) ,
\end{array}
 $$  holds  for Re $s>0$. As a consequence of this analysis, we can rewrite the function $G(p,t)$ in \eqref{4.28 b} in the form 
\begin{equation}
\begin{array}{rl}
\displaystyle G(p,t)=   \int\limits_{-i\infty +\varepsilon }^{i\infty+\varepsilon } e^{sp^2t} \frac{1}{1+s^2}\mathcal E^-(\psi)(p,s)ds,
\end{array}
\end{equation} 
 where 
 \begin{equation}
\begin{array}{rl}
 \mathcal E^{-}(\psi)(p,s)   &:=\displaystyle \lim \limits_{\begin{array}{l}
\scriptstyle{s_1\to \varphi(s)}\\
\scriptstyle{\text{Re }s_1>0}
\end{array}}\frac{1}{2\pi i} \int_{-i\infty}^{i\infty}\widehat{\psi}(pw) \frac{e^{-\widetilde{\Gamma}(w,s)}}{w-s_1}\Omega(w,s)
dw \\
     & \displaystyle  =\lim \limits_{\begin{array}{l}
\scriptstyle{s_1\to \varphi(s)}\\
\scriptstyle{\text{Re }s_1>0}
\end{array}}\frac{1}{2\pi i} \int_{-i\infty}^{i\infty}\widehat{\psi}(pw) \frac{e^{-\widetilde{\Gamma}(w,s)}-e^{-\widetilde{\Gamma}(0,s)} }{w-s_1}\Omega(w,s)
dw -e^{-\widetilde{\Gamma}(0,s)}\widehat{\psi}(\varphi(s)p)\Omega(\varphi(s),s),
\end{array}
 \end{equation}
 with 
 $$\Omega(w,s)=\frac{1+\varphi(s)}{1+k(s)}e^{\widetilde{\Gamma}(-1,s)}\left( \frac{(1-\tilde{A}(s))k^2(s)}{1+k(s)\tilde{A}(s)} -\frac{w-k(s)}{w+ 1}\right). $$

Now we apply the Sokhotskii-Plemelj formula to get
\begin{equation}
\begin{array}{rl}
G(p,t)\hspace{-0.3cm}&=\displaystyle 2\pi i\left(\frac{1}{2}e^{ip^2 t}\mathcal E^{-}(\psi)(p,i)+\frac{1}{2}e^{-ip^2t}\mathcal E^{-}(\psi)(p,-i) +\textbf{VP}  \int\limits_{-i\infty +\varepsilon }^{i\infty+\varepsilon }  \frac{e^{sp^2t}}{1+s^2}\mathcal E^-(\psi)(p,s)ds\right) \\
&=2\pi i [\mathcal K(t)\psi] (p).
\end{array}
\end{equation}
 Consequently,
 \begin{equation}\label{G2}
 \mathcal{G}_2(t)\psi=\frac{1}{2 \pi }\int _0^\infty e^{-px} [\mathcal K(t)\psi] (p) dp .
 \end{equation}
 Finaly, from \eqref{r.1.5},  \eqref{r.1.6.0} and  \eqref{G2} we arrive to the representation \eqref{G0} of $\mathcal{G}(t)u_0$, as we have claimed.

Now we simplify $\mathcal{B}(t)$ given by \eqref{r.1.4}. We rewrite   $\mathcal{B}(t)$ as follows
\begin{equation}
\mathcal{B}(t)h=\mathcal{B}_1(t)h+\mathcal{B}_2(t)h,
\end{equation} where 
\begin{equation} \label{4.34}
\begin{array}{l}
\mathcal{B}_1(t)h= \displaystyle\left(\frac{1}{2\pi i}\right)^{2}\int\limits_{-i\infty +\varepsilon }^{i\infty
+\varepsilon }e^{\xi t} \hat{h}(\xi) \int\limits_{-i\infty }^{i\infty }e^{px}\frac{Y^{+}(p,\xi )}{K(p)+\xi }%
\left(\frac{I^{-}(p,\xi )}{I^{-}(k(\xi ),\xi )}J^{-}(k(\xi
),\xi )\right) dp \  d\xi, \\
\mathcal{B}_2(t)h= \displaystyle\left(\frac{1}{2\pi i}\right)^{2}\int\limits_{-i\infty +\varepsilon }^{i\infty
+\varepsilon }e^{\xi t} \hat{h}(\xi) \int\limits_{-i\infty }^{i\infty }e^{px}\frac{Y^{+}(p,\xi )}{K(p)+\xi } J^{-}(p,\xi )dp \  d\xi.
\end{array}
\end{equation}
Using the Cauchy Theorem,  a direct calculation
gives
$$J^{-}(p,\xi)=\frac{K(p)+\xi}{p}\frac{1}{Y^+(p,\xi)}-\frac{\xi}{p}\frac{1}{Y^+(0,\xi)}.$$
Observing that $K(k(\xi))=-\xi,$ and by using the last identity, we obtain
$$J^{-}(k(\xi),\xi)=-\frac{\xi}{k(\xi)}\frac{1}{Y^+(0,\xi)}.$$
Substituting the previous formula into \eqref{4.34}, we obtain 
\begin{equation} \label{4.36}
\begin{array}{l}
\mathcal{B}_1(t)h= -\displaystyle\left(\frac{1}{2\pi i}\right)^{2}\int\limits_{-i\infty +\varepsilon }^{i\infty
+\varepsilon }e^{\xi t} \hat{h}(\xi) \frac{k(\xi)}{1+k(\xi)\tilde{A}(\xi)}\frac{1}{Y^+(0,\xi)} \\
\hspace{1.6cm}\displaystyle\times\int\limits_{-i\infty }^{i\infty }e^{px}\frac{1}{K(p)+\xi }
\left( \frac{-K(p)Y^+(p,\xi)+\xi \left( (1+p\tilde{A}(\xi))Y^+(p,\xi)-Y^+(0,\xi)\right)}{p^2}\right) dp \  d\xi, \\
\mathcal{B}_2(t)h= \displaystyle\left(\frac{1}{2\pi i}\right)^{2}\int\limits_{-i\infty +\varepsilon }^{i\infty
+\varepsilon }e^{\xi t} \hat{h}(\xi) \int\limits_{-i\infty }^{i\infty }e^{px}\frac{1}{K(p)+\xi }\left( \frac{K(p)}{p}+\frac{\xi}{p}\left( 1- \frac{Y^+(p,\xi)}{Y^+(0,\xi)}\right)\right)dp \  d\xi.
\end{array}
\end{equation}
By following a similar procedure to the one, followed in the analysis of $\mathcal{G}(t),$ applied to the previous representation, we obtain
\begin{equation*}
\begin{array}{l}
\mathcal B(t)h
\displaystyle =\frac{1}{2\pi^2 i}\int \limits_{0}^\infty e^{-px}\\
\hspace{1cm}\times\displaystyle \left(\frac{ e^{ip^2 t}\Omega_B(p,ip^2)\hat{h}(ip^2)+e^{-ip^2 t}\Omega(p,-ip^2)\hat{h}(-ip^2)}{p^2} +\textbf{VP}\int\limits_{-i\infty }^{i\infty
} e^{\xi t} \frac{\Omega_B(p,\xi)}{p^4+\xi^2}\hat{h}(\xi)d\xi\ dp \right),
\end{array}
\end{equation*}
where
\begin{equation*} 
\Omega_B(p,\xi)=\xi\frac{Y^{+}(p,\xi)}{Y^{+}(0,\xi)}\frac{p+k(\xi)}{1+k(\xi)\widetilde{A}(\xi)}.
\end{equation*}
Via Lemmas \ref{Lemma Gamma} and \ref{Lemma A}, we have $\Omega_B(p,sp^2)=p^3\Psi_B(s)$, where $\Psi_B$ is given by 
\eqref{Psi1}. 
Thus, the change of variables  $\xi:=sp^2$ implies
\begin{equation}
\begin{array}{l}
\mathcal B(t)h=\displaystyle\frac{1}{2\pi i}\int \limits_{0}^\infty e^{-px}p \\
\hspace{1.5cm}\times\displaystyle \left(e^{ip^2 t}\Psi_{B}(i)\hat{h}(ip^2)+e^{-ip^2 t}\Psi_{B}(-i)\hat{h}(-ip^2) +\textbf{VP}\frac{1}{\pi}\int\limits_{-i\infty }^{i\infty
} e^{sp^2 t} \frac{\Psi_B(s)}{1+s^2}\hat{h}(sp^2)ds\ dp\right),
\end{array}
\end{equation} 
and moreover 
\begin{equation}
\begin{array}{l}
\displaystyle \mathcal B(t)h= \int_0^t H(t-\tau)h(\tau)d\tau, 
\end{array}
\end{equation} 
where 
$$H(t)=\displaystyle\frac{1}{2\pi i}\int \limits_{0}^\infty e^{-px}p \\
\displaystyle \left(e^{ip^2 t}\Psi_{B}(i)+e^{-ip^2 t}\Psi_{B}(-i) +\textbf{VP}\frac{1}{\pi}\int\limits_{-i\infty }^{i\infty
} e^{sp^2 t} \frac{\Psi_B(s)}{1+s^2}ds\right) dp,$$
which implies the integral representation \eqref{Boundary function}. This analysis  completes the proof of Lemma \ref{propo lineal}.
 
\end{proof}

\section{Preliminaries}\label{LemmasTecnicos}
In this section we present some essential lemmas for our further analysis.
Firstly, we prove the main properties of the operators $\mathcal{G}(t)$ and $\mathcal{B}(t)$ defined in \eqref{Green} and \eqref{Boundary function} respectively. 

\begin{lemma}\label{Lemma 5.1}
For $\psi\in \mathbf Z^{1+\epsilon,2}:=\mathbf{H}^{1+\epsilon}(\mathbf R^+)\cap \mathbf L^{2}(\mathbb R^+,|x|^{2}dx)$,  with $\epsilon\in (0,\frac{1}{2})$, $n=0,1,$ $\psi(0)=0,$  the estimate
$\|\partial_x^n\mathcal{G}(t)\psi \|_{\mathbf L^2(\mathbf R^+)}\lesssim \langle t \rangle^{-\frac{1}{2}(n+\frac{1}{2})}\|\psi\|_{\mathbf{Z}^{1+\epsilon, 2}(\mathbb R^+)}$
holds.
\end{lemma}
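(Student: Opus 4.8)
The plan is to analyse the two pieces of $\mathcal G=\mathcal G_1+\mathcal G_2$ from \eqref{Green} separately and to reduce each to a scalar oscillatory integral in the frequency variable. Writing $p=i\xi$ on the contour and using $K(i\xi)=-i\xi|\xi|$, the first piece is the free half–line propagator
\begin{equation*}
\partial_x^n\mathcal G_1(t)\psi(x)=\int_{\mathbb R}(i\xi)^n\,e^{i(\xi x+\xi|\xi|t)}\,a(\xi)\,d\xi,\qquad a(\xi):=\tfrac{1}{2\pi}\widehat\psi(i\xi),
\end{equation*}
where $a$ is, up to a constant, the Fourier transform of the zero–extension of $\psi$. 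Two structural facts drive everything. First, the compatibility condition $\psi(0)=0$ makes this zero–extension belong to $\mathbf H^{1+\epsilon}(\mathbb R)$, so that $\langle\xi\rangle^{1+\epsilon}a\in\esp^2$ with no boundary–jump defect; second, the weight bound $\||x|^2\psi\|_{\esp^2}<\infty$ gives $a\in\mathbf H^2_\xi$, i.e. two $\xi$–derivatives of the amplitude in $\esp^2$ and hence a controlled modulus of continuity of $a$. Together these are exactly the information stored in $\|\psi\|_{\mathbf Z^{1+\epsilon,2}}$.

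I would first fix the decay exponent by scaling. Substituting $\xi=\sigma t^{-1/2}$ and then $x=t^{1/2}y$ gives the exact identity
\begin{equation*}
\|\partial_x^n\mathcal G_1(t)\psi\|_{\esp^2(\mathbb R^+)}=t^{-\frac12(n+\frac12)}\,\|I(\cdot,t)\|_{\esp^2(\mathbb R^+_y)},\qquad I(y,t):=\int_{\mathbb R}(i\sigma)^n e^{i(\sigma y+\sigma|\sigma|)}\,a(\sigma t^{-1/2})\,d\sigma,
\end{equation*}
so the target rate $\langle t\rangle^{-\frac12(n+\frac12)}$ is produced automatically and the whole problem collapses to the uniform–in–$t$ bound $\|I(\cdot,t)\|_{\esp^2(\mathbb R^+_y)}\lesssim\|\psi\|_{\mathbf Z^{1+\epsilon,2}}$. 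It is essential to note that the corresponding \emph{full–line} norm does not stay bounded (on $\mathbb R$ the evolution is unitary): the gain is genuinely a half–line, dispersive–cancellation effect, consistent with the fact that for $y>0$ the phase $\Phi(\sigma)=\sigma y+\sigma|\sigma|$ has $\Phi'(\sigma)=y+2|\sigma|\ge y>0$ and thus no real critical point.

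To capture that cancellation cleanly I would expand the squared norm before integrating in $x$, using the Sokhotski–Plemelj identity $\int_0^\infty e^{i(\xi-\eta)x}\,dx=\pi\delta(\xi-\eta)+i\,\mathrm{p.v.}\,(\xi-\eta)^{-1}$, which yields
\begin{equation*}
\|\partial_x^n\mathcal G_1(t)\psi\|_{\esp^2(\mathbb R^+)}^2=\pi\int_{\mathbb R}|\xi|^{2n}|a(\xi)|^2\,d\xi+i\,\mathrm{p.v.}\iint\frac{e^{i(\xi|\xi|-\eta|\eta|)t}}{\xi-\eta}(\xi\eta)^n a(\xi)\overline{a(\eta)}\,d\xi\,d\eta.
\end{equation*}
For $|\xi|\gtrsim t^{-1/2}$ the inner principal value localises at $\eta\approx\xi$ and, via $\mathrm{p.v.}\int e^{2i|\xi|tu}u^{-1}du=i\pi$, reproduces $-\pi|\xi|^{2n}|a|^2$, cancelling the diagonal term to leading order; the remainder there is absolutely convergent and lower order because the weight furnishes the Hölder modulus of $(\xi\eta)^n a\overline a$ that kills the $1/(\xi-\eta)$ singularity. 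The surviving, \emph{uncancelled} contribution comes from the low–frequency zone $|\xi|\lesssim t^{-1/2}$, where $\Phi'=2|\xi|$ degenerates; its size is $\int_{|\xi|\lesssim t^{-1/2}}|\xi|^{2n}|a|^2\,d\xi\lesssim\|a\|_{\esp^\infty}^2\,t^{-\frac12(2n+1)}$, which is exactly of order $t^{-(n+\frac12)}$ and reproduces the claimed rate, the extra $t^{-n/2}$ being precisely the vanishing $|\xi|^{2n}$ supplied by $\partial_x^n$.

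The same scheme applies to $\mathcal G_2$: in the representation \eqref{4.3}--\eqref{f30} the oscillation is carried by $e^{\pm ip^2t}$ and by the principal–value integral $\mathrm{VP}\int e^{sp^2t}(1+s^2)^{-1}\mathcal E^-(\psi)(p,s)\,ds$, while the amplitudes $\mathcal E^-,\Omega,\widetilde\Gamma,\widetilde A$ are bounded and $p$–differentiable by Lemmas \ref{Lemma Gamma} and \ref{Lemma A}; the rescaling $sp^2t\sim1$ together with the Plemelj–cancellation / low–frequency split above then delivers the identical decay. I expect the genuine difficulties to be two: quantifying uniformly the residual of the principal–value integral in the borderline zone $|\xi|\sim t^{-1/2}$ (where van der Corput degenerates and where both weight derivatives and $\psi(0)=0$ are consumed), and controlling $\mathcal E^-(\psi)(p,s)$ near the singular points $s=\pm i$ of the $\mathrm{VP}$ integral in \eqref{4.3}, so that the boundary piece $\mathcal G_2$ is dominated by the same norms.
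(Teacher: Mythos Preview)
Your strategy is genuinely different from the paper's, and as written it is incomplete. The paper does not attempt a direct oscillatory–integral/cancellation analysis at all. Instead it splits the estimate into a $t$--uniform bound and a large--time decay bound, and then combines them into the $\langle t\rangle^{-\frac12(n+\frac12)}$ factor. For the uniform bound on $\mathcal G_1$ it simply observes that $\mathcal G_1(t)\psi$ is the restriction to $\mathbb R^+$ of the full--line free evolution applied to the zero extension $\psi^*=\mathbf 1_{(0,\infty)}\psi$, and invokes the full--line theory (Fonseca--Ponce \cite{FG}, together with the extension lemma of \cite{Erdogan} which needs $\psi(0)=0$) to get $\|\mathcal G_1(t)\psi\|_{\mathbf H^1}\lesssim\|\psi\|_{\mathbf Z^{1+\epsilon,2}}$. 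For the uniform bound on $\mathcal G_2$ it exploits that \eqref{G2} is a \emph{Laplace} integral in $p$, not a Fourier one: since $\|\partial_x^n e^{-px}\|_{\mathbf L^2_x(\mathbb R^+)}\simeq p^{n-\frac12}$, the $\mathbf L^2$ norm reduces to a one--dimensional integral in $p$, and the paper then bounds $\mathcal E^-(\psi)(p,s)$ pointwise using the asymptotics $\widetilde A(s)=O(|s|^{-1/2})$, $k(s),\varphi(s)=O(|s|^{1/2})$ from Lemmas \ref{Lemma Gamma}--\ref{Lemma A}. The large--time decay $\|\partial_x^n\mathcal G(t)\psi\|_{\mathbf L^2}\lesssim t^{-\frac12(n+\frac12)}\|\psi\|_{\mathbf L^1}$ is not proved here at all: it is quoted from Hayashi--Kaikina \cite{hayashi2010benjamin}, and then $\|\psi\|_{\mathbf L^1}\lesssim\|\psi\|_{\mathbf Z^{1,1}}$ closes the argument.

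Your route tries to obtain the decay for each piece from scratch via a Plemelj expansion of $\|\partial_x^n\mathcal G_1(t)\psi\|_{\mathbf L^2(\mathbb R^+)}^2$ and a diagonal cancellation. The heuristic is sound (no stationary phase for $x>0$ is exactly why the half--line norm decays), but the two ``genuine difficulties'' you list at the end are the entire content of the proof, and you have not carried them out: the control of the off--diagonal remainder away from the regime $|\xi|\sim t^{-1/2}$ requires a careful quantitative splitting (same/opposite signs of $\xi,\eta$, near/far from diagonal), and the passage from $\mathrm{p.v.}\int e^{2i|\xi|tu}u^{-1}du=i\pi$ to the actual bilinear phase $\xi|\xi|-\eta|\eta|$ with varying amplitudes needs the H\"older modulus argument made precise. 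More seriously, ``the same scheme applies to $\mathcal G_2$'' is not right: in \eqref{G2} the $x$--dependence is through the decaying kernel $e^{-px}$, not an oscillatory $e^{ipx}$, so there is no Plemelj identity in $x$ and no diagonal cancellation to exploit; the correct mechanism there is the one the paper uses (Minkowski in $x$, then pointwise estimates on $\widetilde{\mathcal K}(t)\psi(p)$). In short, the paper's route sidesteps all of your anticipated difficulties by outsourcing the decay to \cite{hayashi2010benjamin} and reducing the present lemma to a $t$--independent bound.
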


\begin{proof}
Let us recall that $\mathcal G(t)=\mathcal G_1(t)+ \mathcal G_2(t)$, where $\mathcal G_1$ and $\mathcal G_2$ are defined in \eqref{G0}.

 Let $\psi^*=\mathbf{1}_{(0,\infty)}\psi$ be an extension of $\psi$ from $\mathbb{R}^+$ to $\mathbb{R}$, since $\psi(0)=0.$ By following   
 \cite[Lemma 2.1]{Erdogan}, we have that
  $\|\psi^*\|_{\mathbf Z^{1+\epsilon}(\mathbb R)}\leq \|\psi\|_{\mathbf Z(\mathbb R^+)}$, where $\mathbf Z^{s,r}(\Omega):=\mathbf{H}^r(\Omega)\cap \mathbf L^{2}(\Omega, |x|^{2r}dx)$. Then $\mathcal{G}_{\mathbb R}(t)\psi^*$ is defined for all $x\in \mathbb R$ and $\mathcal{G}_{1}(t)\psi=\left.\mathcal{G}_{\mathbb R}(t)\psi^*\right|_{=0}$. By using  Theorem 1 in \cite{FG},
we have that $\mathcal{G}_{\mathbb R}(t)\psi^* \in \mathbf{C}([0,\infty); \mathbf{Z}^{1,2})$ and also that
$\| \mathcal{G}_{\mathbb R}(t)\psi^*\|_{\mathbf{H}^{1 }}\lesssim \|\psi^*\|_{\mathbf{Z}^{1,2}(\mathbb R)}\lesssim \|\psi\|_{\mathbf Z^{1,2}(\mathbb R^+)} \lesssim \|\psi\|_{\mathbf Z^{1+\epsilon,2}(\mathbb R^+)}.$ This analysis leads us  to conclude that $\mathcal{G}_{1}(t)\psi \in\mathbf{C}([0,\infty); \mathbf{H}^{1}). $

Now, we estimate the $\mathbf{H}^1$-norm of  $\mathcal{G}_2(t)$. First, we note that for Re$(w)=0$, and Im$(w)>0,$ by a slightly abuse of notation, $$\widehat{\psi}(pw)=\mathcal{F}\{\mathbf{1}_{(0,\infty)}\psi\}(-p|w|),$$
where $\mathcal{F}$ stands for the Fourier transform, and $c(w):=-\text{sign Im}( w).$ Since $\|\partial_x^n e^{-px}\|_{\mathbf {L}^2(\mathbb R^+)}\lesssim p^{n-\frac{1}{2}} $  for $p>0$,  we have 
\begin{equation}
\begin{array}{rl}
\displaystyle\left\| \partial_x^n \int_0^\infty e^{-px}\widehat {\psi}(pw)dp\right\|_{\mathbf {L}^2_x(\mathbb R^+)}&\displaystyle
\lesssim  \int_0^\infty p^{n-\frac{1}{2}}|\mathcal{F}\{\mathbf{1}_{(0,\infty)}\psi\}(-p|w|)dp\\
&\displaystyle\lesssim |w|^{-(n+\frac{1}{2})} \int_{-\infty}^\infty |q|^{n-\frac{1}{2}}|\mathcal{F}\{\mathbf{1}_{(0,\infty)}\psi\}(q)|dq\\
&\displaystyle\lesssim	|w|^{-(n+\frac{1}{2})}  \left( \|\mathbf{1}_{(0,\infty)}\psi\|_{\mathbf L^{2,1}(\mathbb R)}+\|\mathbf{1}_{(0,\infty)}\psi\|_{\mathbf H^{1+\epsilon}(\mathbb R)}\right).
\end{array}
\end{equation}
It is clear that  $ \|\mathbf{1}_{(0,\infty)}\psi\|_{\mathbf L^{2,1}(\mathbb{R})}\leq  \|\psi\|_{\mathbf L^{2,1}(\mathbf R^+)}.$ Also, note that $\|\mathbf{1}_{(0,\infty)}\psi\|_{\mathbf H^{1+\epsilon}(\mathbb R)} \lesssim \|\psi\|_{\mathbf H^{1+\epsilon}(\mathbb R^+)},$ in view of Lemma 2.1 in \cite{Erdogan}. Therefore, 
\begin{equation}\label{5.2}
\left\| \partial_x^n \int_0^\infty e^{-px}\widehat {\psi}(pw)dp\right\|_{\mathbf {L}^2_x(\mathbb R^+)} \lesssim 	|w|^{-(n+\frac{1}{2})}  \|\psi\|_{\mathbf Z^{1+\epsilon,2}(\mathbb R^+)}.
\end{equation}
On the other hand, the change of variables $w=q\sqrt{|s|}$ applied to the function $\mathcal{E}^-$, defined in \eqref{E}, implies the estimate
\begin{equation}\label{E1}
\displaystyle \mathcal E^{-}(\psi)(p,s):=\mathcal E^{-}_{1}(\psi)(p,s)+ \mathcal E_{2}(\psi)(p,s),
\end{equation}
where
$$
\begin{array}{l}
\mathcal E^{-}_{1}(\psi)(p,s)=\hspace{-0.5cm}\displaystyle \lim \limits_{\begin{array}{l}
\scriptstyle{s_1\to \varphi(s)}\\
\scriptstyle{\text{Re }s_1>0}
\end{array}}\frac{1}{2\pi i} \int_{-i\infty}^{i\infty}\widehat{\psi}(pq\sqrt{|s|}) \frac{e^{-\widetilde{\Gamma}(q\sqrt{|s|}),s)}-e^{-\widetilde{\Gamma}(0,s)} }{q	-s_1|s|^{-\frac{1}{2}}}\Omega(q\sqrt{|s|}),s)
dw \\
 \mathcal E_{2}(\psi)(p,s)=-e^{-\widetilde{\Gamma}(0,s)}\widehat{\psi}(\varphi(s)p)\Omega(\varphi(s),s)\\
 \Omega(w,s)=\frac{1+\varphi(s)}{1+k(s)}e^{\widetilde{\Gamma}(-1,s)}\left( \frac{(1-\tilde{A}(s))k^2(s)}{1+k(s)\tilde{A}(s)} -\frac{w-k(s)}{w+ 1}\right).
\end{array}
$$
Since $\tilde A(s)=O(\frac{1}{\sqrt{|s|}})$ and $k(s)=O(\sqrt{|s|})=\varphi(s),$ we have $ \Omega(q\sqrt{|s|}),s)=O(\{s\}^{\frac{1}{2}}\langle s \rangle)$.  From which we conclude that 
\begin{equation}\label{E_2}
\begin{array}{l}
\displaystyle \left\| \partial_x^n \int_0^\infty e^{-px} \mathcal E^{-}_{1}(\psi)(p,s)dp\right\|_{\mathbf {L}^2_x(\mathbb R^+)}\\
 \hspace{2cm} \lesssim \displaystyle \|\psi\|_{\mathbf{Z}^{1+\epsilon, 2}(\mathbb R^+)}\frac{\{s\}^{\frac{1}{2}}\langle s \rangle }{|s|^{\frac{1}{2}(n+\frac{1}{2})}}\displaystyle \lim \limits_{\begin{array}{l}
\scriptstyle{s_1\to \varphi(\pm i)}\\
\scriptstyle{\text{Re }s_1>0}
\end{array}} \frac{1}{2\pi i} \int_{-i\infty}^{i\infty}\frac{\{q\} }{|q|^{n+\frac{1}{2}}|q-s_1|}
dw\\
 \hspace{2cm}  \lesssim \displaystyle \|\psi\|_{\mathbf{Z}^{1+\epsilon, 2}(\mathbb R^+)}\{s\}^{-\frac{1}{4}}\langle s\rangle^{\frac{3}{4}}.
\end{array}
\end{equation}
A similar argument to the one used above, allows us to show that 
\begin{equation}
\left\| \partial_x^n \int_0^\infty e^{-px} \mathcal E_{2}(\psi)(p,s)dp\right\|_{\mathbf {L}^2_x(\mathbb R^+)} \lesssim \displaystyle \|\psi\|_{\mathbf{Z}^{1+\epsilon, 2}(\mathbb R^+)}\{s\}^{-\frac{1}{4}}\langle s\rangle^{\frac{3}{4}}.
\end{equation}
From  \eqref{E1}-\eqref{E_2} we conclude the estimate $\left\| \partial_x^n \int_0^\infty e^{-px} \mathcal E^-(\psi)(p,s)dp\right\|_{\mathbf {L}^2_x(\mathbb R^+)} \lesssim \displaystyle \|\psi\|_{\mathbf{Z}^{1+\epsilon, 2}(\mathbb R^+)}\{s\}^{-\frac{1}{4}}\langle s\rangle^{\frac{3}{4}}.$ By combining this inequality with the definition of $\mathcal{G}_2(t)$, given by \eqref{G0}, we have
$\|\partial_x^n\mathcal{G}_2(t)\psi \|_{\mathbf L^2(\mathbf R^+)}\lesssim \|\psi\|_{\mathbf{Z}^{1+\epsilon, 2}(\mathbb R^+)} $, thus we have 
\begin{equation}\label{p 1}
\|\partial_x^n\mathcal{G}(t)\psi \|_{\mathbf L^2(\mathbf R^+)}\lesssim \|\psi\|_{\mathbf{Z}^{1+\epsilon, 2}(\mathbb R^+)}.
\end{equation}

On the other hand, Hayashi and Kaikina in \cite{hayashi2010benjamin}  proved
\begin{equation} \label{p 2}
\|\partial^n_x\mathcal{G}(t)\psi\|_{\mathbf L^{2}}\lesssim t^{-\frac{1}{2}(n+\frac{1}{2})}\|\psi\|_{\mathbf L^{1}},
\end{equation}
since $\|\psi\|_{\mathbf L^{1}}\lesssim \|\psi\|_{\mathbf Z^{1,1}}$, from \eqref{p 1} and \eqref{p 2} we can conclue 
\begin{equation}
\|\partial_x^n\mathcal{G}(t)\psi \|_{\mathbf L^2(\mathbf R^+)}\lesssim \langle t \rangle^{-\frac{1}{2}(n+\frac{1}{2})}\|\psi\|_{\mathbf{Z}^{1+\epsilon, 2}(\mathbb R^+)}.
\end{equation}

\end{proof}

Our attention now is turned to the boundary operator $\mathcal{B}(t)$, given by \eqref{Boundary function}. In the next Lemma we present an estimation of the $\mathbf{H}^1$-norm of $\mathcal{B}(t)h$.

\begin{lemma}\label{Lemma 5.3}
For $h\in\mathbf Z^{1,1}:= \mathbf H^1(\mathbb R^+)\cap \mathbf L^{2,1}(\mathbf R^+)  $, with $h(0)=0$ we have
$$\Vert\mathcal B(t)h\Vert_{\mathbf H^1}\lesssim \langle t \rangle^{-(\frac{3}{4}+\frac{n}{2})}  \Vert h \Vert_{\mathbf Z^{1,1}}.
$$
\end{lemma}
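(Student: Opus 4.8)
The plan is to follow the scheme of Lemma~\ref{Lemma 5.1}, adapted to the boundary operator. Throughout I read the statement as the family of estimates $\|\partial_x^n\mathcal{B}(t)h\|_{\mathbf{L}^2(\mathbf{R}^+)}\lesssim\langle t\rangle^{-(\frac34+\frac n2)}\|h\|_{\mathbf{Z}^{1,1}}$ for $n=0,1$, whose sum over $n$ gives the asserted $\mathbf{H}^1$ bound. The natural starting point is the kernel representation $\mathcal{B}(t)h=\int_0^t H(t-\tau)h(\tau)\,d\tau$ from \eqref{Boundary function}, so that it suffices to prove the \emph{kernel estimate}
\begin{equation}\label{kernelbound}
\|\partial_x^n H(s)\|_{\mathbf{L}^2_x(\mathbf{R}^+)}\lesssim\langle s\rangle^{-(\frac34+\frac n2)},\qquad n=0,1,
\end{equation}
and then to absorb it against $h$ by a time-convolution argument.

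First I would decompose $H(s)$ according to \eqref{Psi1} into the two boundary terms carrying $e^{\pm ip^2 s}\Psi_B(\pm i)$ and the principal value term carrying $\textbf{VP}\int e^{\sigma p^2 s}\frac{\Psi_B(\sigma)}{1+\sigma^2}\,d\sigma$ along the imaginary axis. The algebraic factors are controlled by the asymptotics already recorded in the proof of Lemma~\ref{Lemma 5.1}, namely $\tilde A(s)=O(|s|^{-1/2})$ and $k(s)=\varphi(s)=O(|s|^{1/2})$, together with the boundedness of the $\tilde\Gamma$-factors supplied by Lemmas~\ref{Lemma Gamma} and \ref{Lemma A}; these give size bounds for $\Psi_B(s)=s\frac{1+k(s)}{1+k(s)\tilde A(s)}e^{\tilde\Gamma(-1,s)-\tilde\Gamma(0,s)}$ both at the poles $s=\pm i$ and along the contour. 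The poles of $\frac{1}{1+s^2}$ at $s=\pm i$ are exactly those extracted by the Sokhotski--Plemelj formula \eqref{2.3a}, so the two boundary terms are the residual contributions and the VP term is the regular remainder; this is the step that must be handled with care.

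For the bounded range $\langle s\rangle\sim1$, \eqref{kernelbound} is a uniform bound: bringing $\partial_x^n$ inside the $p$-integral, applying Minkowski's integral inequality and the elementary estimate $\|\partial_x^n e^{-px}\|_{\mathbf{L}^2(\mathbf{R}^+)}\lesssim p^{n-\frac12}$ reduces each term to a convergent $p$-integral of $p^{n+\frac12}$ against the $\Psi_B$-weights, which is finite by the asymptotics above. For the decaying range $\langle s\rangle\gg1$ I would instead exploit the oscillation. Computing the $\mathbf{L}^2_x$-norm squared of $\partial_x^n\int_0^\infty e^{-px}p\,e^{\pm ip^2 s}\,dp$ produces, after the change of variables $u=p+q,\ v=p-q$, an oscillatory double integral with phase $uvs$ whose amplitude vanishes to order $n+1$ at the endpoints; van der Corput / stationary-phase estimates, with the stationary behaviour sitting at the origin because the phase is $\propto p^2$, then yield the rate $\langle s\rangle^{-(\frac34+\frac n2)}$. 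The VP term is treated the same way after writing $s=i\sigma$, the extra $\sigma$-integration against $\frac{\Psi_B(i\sigma)}{1-\sigma^2}$ being harmless once the $\sigma=\pm1$ singularities are removed by Sokhotski--Plemelj.

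Finally I would pass from \eqref{kernelbound} to the statement by the convolution bound $\|\partial_x^n\mathcal{B}(t)h\|_{\mathbf{L}^2_x}\le\int_0^t\|\partial_x^n H(t-\tau)\|_{\mathbf{L}^2_x}\,|h(\tau)|\,d\tau$, splitting $(0,t)=(0,t/2)\cup(t/2,t)$. On $(0,t/2)$ one has $\langle t-\tau\rangle\gtrsim\langle t\rangle$, so that piece is $\lesssim\langle t\rangle^{-(\frac34+\frac n2)}\|h\|_{\mathbf{L}^1}$ with $\|h\|_{\mathbf{L}^1}\lesssim\|h\|_{\mathbf{Z}^{1,1}}$ by Cauchy--Schwarz against the weight $\langle\tau\rangle^{-1}$. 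On $(t/2,t)$ one has $\tau\sim t$, and here I would use the weighted space $\mathbf{L}^{2,1}$ together with the $\mathbf{H}^1$-regularity of $h$ (an integration by parts in $\tau$ moving a derivative onto $h$) to recover the sharp power of $t$; this is the only place where the full strength of the norm $\|h\|_{\mathbf{Z}^{1,1}}$, and in particular the time derivative of $h$, is needed. The main obstacle is precisely the decaying-range kernel estimate \eqref{kernelbound}: extracting the exponent $\frac34+\frac n2$ from the oscillatory $p$- (and $\sigma$-) integrals while simultaneously controlling the $p=0$ endpoint and the $s=\pm i$ poles of $\frac{\Psi_B(s)}{1+s^2}$.
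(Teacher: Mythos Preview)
Your overall architecture---kernel bound plus time-convolution---matches the paper, but there is a genuine gap in the claimed kernel estimate \eqref{kernelbound} at small times. The step ``Minkowski's integral inequality \ldots\ reduces each term to a convergent $p$-integral of $p^{n+\frac12}$ against the $\Psi_B$-weights'' fails: for the residue terms $e^{\pm ip^2s}\Psi_B(\pm i)$ the factor $\Psi_B(\pm i)$ is just a constant, so Minkowski produces $\int_0^\infty p^{n+\frac12}\,dp$, which diverges at infinity. The oscillation $e^{\pm ip^2s}$ is what makes the $p$-integral converge, but once you use it you can only get $\|\partial_x^n H(s)\|_{\mathbf L^2}\lesssim s^{-(\frac34+\frac n2)}$, singular at $s=0$---not $\langle s\rangle^{-(\frac34+\frac n2)}$. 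The kernel $H$ is genuinely singular at $s=0$, so \eqref{kernelbound} as written is false.

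The paper resolves this by splitting in $t$ rather than in the kernel argument. For $t\lesssim1$ it does \emph{not} estimate $H$ alone; it bounds $\partial_x^n\mathcal B(t)h$ directly from \eqref{Boundary function}--\eqref{Psi1}, writing $\widehat h(sp^2)=\mathcal F\{\mathbf 1_{(0,\infty)}h\}(c(s)p^2|s|)$ and changing variables $w=p^2|s|$ so that the decay of $\widehat h$ supplies the missing large-$p$ integrability: one gets $|s|^{-(\frac34+\frac n2)}\int|w|^{\frac12(n-\frac12)}|\mathcal F h(w)|\,dw\lesssim|s|^{-(\frac34+\frac n2)}\|h\|_{\mathbf H^1}$, and then the $s$-integral against $\frac{\Psi_B(s)}{1+s^2}$ converges because $\Psi_B(s)=O(\{s\}\langle s\rangle^{3/2})$. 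For $t>1$ the paper does prove $\|\partial_x^nH(t)\|_{\mathbf L^2}\lesssim t^{-(\frac34+\frac n2)}$, but by contour rotation rather than van der Corput: Cauchy's theorem moves the $p$- and $s$-integrals to rays $\tilde C_{\pm}$ on which $\mathrm{Re}(\pm ip^2)<0$ and $\mathrm{Re}(isp^2)<0$, converting oscillation into exponential decay; after rescaling $p\mapsto t^{-1/2}p$ the exponent falls out immediately. Your stationary-phase route could in principle give the same bound, but it is more delicate at the $p=0$ endpoint and at large $p$.

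Finally, once you have the (singular) kernel bound for $t>1$, the convolution step is simpler than you suggest: after the split $(0,t/2)\cup(t/2,t)$ and the substitution $\tau\mapsto t-\tau$ on the second piece, \emph{both} integrals are controlled by $t^{-(\frac34+\frac n2)}\|h\|_{\mathbf L^1}$. No integration by parts in $\tau$ is needed, and the $\mathbf H^1$-regularity of $h$ enters only through the small-$t$ argument above, not through the convolution.
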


\begin{proof}
For any Re$(s)=0$, we observe that $\widehat{h}(p^2s)=\mathcal{F}\{\mathbf{1}_{(0,\infty)}h\}( c(s)p^2|s|)$, where $\mathcal F$ is the clasical Fourier transform and $c(s)=\text{sign Im}(s)$. Since $\|\partial_x^n e^{-px}\|_{\mathbf L^2_x(\mathbb R^+)}\lesssim p^{n-\frac{1}{2}}$ for $p>0,$ we deduce the estimates
\begin{equation}\label{5.14}
\begin{array}{rl}
\displaystyle\left\| \partial_x^n \int_0^\infty e^{-px}p \widehat {h}(p^2s)dp\right\|_{\mathbf {L}^2_x(\mathbb R^+)}&\displaystyle
\lesssim  \int_0^\infty p^{n+\frac{1}{2}}|\mathcal{F}\{\mathbf{1}_{(0,\infty)}h\}(c(s)p^2|s|)|dp\\
&\displaystyle\lesssim |s|^{-(\frac{3}{4}+\frac{n}{2})} \int_{-\infty}^\infty |w|^{\frac{1}{2}(n-\frac{1}{2})}|\mathcal{F}\{\mathbf{1}_{(0,\infty)}h\}(w)|dw\\
&\displaystyle\lesssim	|s|^{-(\frac{3}{4}+\frac{n}{2})}\|\mathbf{1}_{(0,\infty)}h\|_{\mathbf H^{1}(\mathbb R)}\lesssim |s|^{-(\frac{3}{4}+\frac{n}{2})}\|h\|_{\mathbf H^{1}(\mathbb R^+)}.
\end{array}
\end{equation}

Since $\Psi_B(s)=O(\{s\}\langle  s\rangle^{\frac{3}{2}})$, from \eqref{Boundary function} and \eqref{5.14} we get
\begin{equation}
\|\partial_x^n  \mathcal{B}(t)h\|_{\mathbf {L}^2_x(\mathbb R^+)} \lesssim \|h\|_{\mathbf{H}^1(\mathbf R^+)}\left(1+\textbf{VP} \int_{-i\infty}^{i\infty}\frac{1}{|s|^{\frac{3}{4}+\frac{n}{2}}}\frac{\{s\}\langle s \rangle^{\frac{3}{2}}}{|1+s^2|} ds \right)\lesssim \|h\|_{\mathbf{H}^1(\mathbf R^+)}.
\end{equation}
On the other hand, for $t>1$, let us remember the equivalent representation for the operator $\mathcal{B}(t)$
\begin{equation}
\mathcal{B}(t)h=\int_0^tH(t-\tau)h(\tau)d\tau.
\end{equation}
 Via the  Cauchy theorem and the change of variables  $p=t^{\frac{1}{2}}w$,  we rewrite the function $H$, given by \eqref{Psi1}, as $H=H_1+H_2+H_3$
where 
$$
\begin{array}{l}
\displaystyle H_1(x,t)=t^{-1}\int_{\widetilde{C}_+} e^{-pxt^{-\frac{1}{2}}}p e^{ip^2}dp\\
\displaystyle H_2(x,t)=t^{-1}\int_{\widetilde{C}_-} e^{-pxt^{-\frac{1}{2}}}p e^{-ip^2} dp\\
\displaystyle H_3(x,t)=t^{-1}\int_{0}^\infty e^{-pxt^{-\frac{1}{2}}}p \int_{\widetilde C} e^{isp^2 }\frac{\Psi_B(is)}{1-s^2}ds
\end{array}
$$
where $\widetilde C:=\{q\in \mathbb C: \text{arg}(q)=\delta, \pi-\delta\}$ and  $\widetilde{C}_{\pm}:=\{q\in \mathbb C: \text{arg}(q)=\pm \delta\}$.  It is important to observe for $p\in \widetilde{C}_{\pm}$ we have Re$(pxt^{-\frac{1}{2}})>0$, Re$(\pm ip^2)<0$
where $\delta>0$ is sufficiently small. Moreover for $p>0$ and $s\in \widetilde C$ we note Re$(isp^2 )<0$.  Since $\|\partial_x^n e^{-px}\|_{\mathbf L^2_x(\mathbb R^+)}\lesssim |p|^{n-\frac{1}{2}}$ for Re$p>0$ we get
 \begin{equation}
\begin{array}{rl}
\displaystyle\left\| \partial_x^n H_1\right\|_{\mathbf {L}^2_x(\mathbb R^+)}& \displaystyle \lesssim t^{-\frac{3}{4}-\frac{n}{2}} \int_{\widetilde C_{+}} |p|^{n+\frac{1}{2}}e^{-|p|^2}dp
 \lesssim t^{-\frac{3}{4}-\frac{n}{2}}
\end{array}
\end{equation}
In an analogous way, we get $\left\| \partial_x^n H_2\right\|_{\mathbf {L}^2_x(\mathbb R^+)} \lesssim t^{-\frac{3}{4}-\frac{n}{2}}$.
To control the $\mathbf{L^2}-$norm of $\partial_x^n H_3$ we observe
$$\int_{0}^\infty \int_{\widetilde C} e^{-|s|p^2 }p^{n+\frac{1}{2}}\frac{\{s\}\langle s \rangle ^{\frac{3}{2}}}{|1-s^2|}ds \ dp \lesssim 1,$$ therefore $\left\| \partial_x^n H_3\right\|_{\mathbf {L}^2_x(\mathbb R^+)} \lesssim t^{-\frac{3}{4}-\frac{n}{2}}$. From previous inequalities we infer
\begin{equation}
\left\| \partial_x^n H(\cdot, t)\right\|_{\mathbf {L}^2_x(\mathbb R^+)} \lesssim t^{-\frac{3}{4}-\frac{n}{2}}.\label{q 2}
\end{equation}
We  also note that
\begin{equation}
\label{q 1}  \mathcal{B}(t)h= \int_0^tH(x,t-\tau)h(\tau)d\tau=\int_0^{\frac{t}{2}} H(x, t-\tau)h(\tau)d\tau + \int_{\frac{t}{2}}^t H(x, \tau)h(t-\tau)d\tau, 
\end{equation}
hence, \eqref{q 1} and \eqref{q 2} implies 
$$ \left\| \partial_x^n \mathcal{B}(t)h \right\|_{\mathbf {L}^2_x(\mathbb R^+)}  \lesssim \int_0^{\frac{t}{2}} \frac{1}{(t-\tau)^{\frac{3}{4}+\frac{n}{2}}}h(\tau)d\tau + \int_{\frac{t}{2}}^t \frac{1}{\tau^{\frac{3}{4}+\frac{n}{2}}}h(t-\tau)d\tau\lesssim t^{-\frac{3}{4}-\frac{n}{2}}\|h\|_{\mathbf{L}^1(\mathbb R^+)},$$ 
since $\|h\|_{\mathbf{L}^1(\mathbb R^+)}\lesssim \|h\|_{\mathbf{Z}^{1,1}(\mathbb R^+)}$, we have  $\left\| \partial_x^n \mathcal{B}(t)h \right\|_{\mathbf {L}^2_x(\mathbb R^+)}  \lesssim t^{-\frac{3}{4}-\frac{n}{2}}\|h\|_{\mathbf{Z}^{1,1}(\mathbb R^+)}$. The proof is complete.
\end{proof}

The following Lemma is used to obtain an estimate in the proof of Proposition \ref{Prop 2}.

\begin{lemma}\label{Lemma 5.4}
For $h\in \mathbf Z^{1,1}= \mathbf H^1(\mathbf R^+)\cap \mathbf{L}^{2,1}(\mathbf R^+) $  we have
\begin{equation} \label{5.37}
\|\mathcal{B}(t)h\|_{\mathbf L^{2,\epsilon}_x(\mathbb R^+)}\lesssim \|h\|_{\mathbf Z^{1,1}}.
\end{equation}
\end{lemma}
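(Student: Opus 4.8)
The plan is to work from the convolution representation $\mathcal{B}(t)h=\int_0^t H(x,t-\tau)h(\tau)\,d\tau$ recorded in \eqref{Boundary function}, which already incorporates the contour deformation that resolves the poles at $s=\pm i$, and to re-run the argument of Lemma~\ref{Lemma 5.3} with the differential weight $\partial_x^n$ replaced by the multiplicative weight $|x|^{\epsilon}$. The only bookkeeping device needed is the elementary identity
\begin{equation*}
\||x|^{\epsilon}e^{-px}\|_{\mathbf L^2(\mathbb R^+)}=\Big(\int_0^\infty x^{2\epsilon}e^{-2\,\mathrm{Re}(p)x}\,dx\Big)^{1/2}=C_\epsilon\,(\mathrm{Re}\,p)^{-\epsilon-\frac12}\lesssim |p|^{-\epsilon-\frac12},
\end{equation*}
valid whenever $\mathrm{Re}(p)\gtrsim|p|$, that is, on each ray $\widetilde C_{\pm}$ and for $p>0$. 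In this way $|x|^{\epsilon}$ plays exactly the role of $\partial_x^n$ under the formal substitution $n\mapsto-\epsilon$, since in Lemma~\ref{Lemma 5.3} the analogous estimate was $\|\partial_x^n e^{-px}\|_{\mathbf L^2}\lesssim |p|^{\,n-\frac12}$.

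First I would establish the weighted kernel bound
\begin{equation*}
\||x|^{\epsilon} H(\cdot,t)\|_{\mathbf L^2(\mathbb R^+)}\lesssim t^{-\frac34+\frac\epsilon2},\qquad t>0,
\end{equation*}
by reusing the decomposition $H=H_1+H_2+H_3$ from the proof of Lemma~\ref{Lemma 5.3}. Feeding the displayed weighted bound into each contour integral, and using $|e^{\pm i p^2}|=e^{-c|p|^2}$ on $\widetilde C_{\pm}$ together with $\mathrm{Re}(isp^2)<0$ for $p>0$, $s\in\widetilde C$, the terms $H_1,H_2$ are controlled by the convergent Gaussian integrals $\int_{\widetilde C_{\pm}}|p|^{\frac12-\epsilon}e^{-c|p|^2}\,|dp|$. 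For $H_3$ the inner $p$-integral produces, after the substitution $u=c|s|p^2$, a factor $|s|^{-(\frac34-\frac\epsilon2)}$, so that the whole term is bounded by $t^{-\frac34+\frac\epsilon2}$ times $\int_{\widetilde C}\frac{|\Psi_B(is)|}{|1-s^2|}\,|s|^{-(\frac34-\frac\epsilon2)}\,|ds|$. Inserting $\Psi_B(s)=O(\{s\}\langle s\rangle^{\frac32})$, the integrand behaves like $|s|^{\frac14+\frac\epsilon2}$ near $s=0$ and like $|s|^{-\frac54+\frac\epsilon2}$ at infinity, so convergence at infinity is precisely the condition $\epsilon<\tfrac12$, matching the hypothesis.

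Finally, I would conclude by Minkowski's integral inequality,
\begin{equation*}
\||x|^{\epsilon}\mathcal B(t)h\|_{\mathbf L^2(\mathbb R^+)}\le\int_0^t\||x|^{\epsilon} H(\cdot,t-\tau)\|_{\mathbf L^2}\,|h(\tau)|\,d\tau\lesssim\int_0^t (t-\tau)^{-\frac34+\frac\epsilon2}|h(\tau)|\,d\tau,
\end{equation*}
and by estimating this convolution uniformly in $t$. Splitting the $\tau$-integral according to $t-\tau\le 1$ and $t-\tau>1$: on the first region the kernel is integrable, $\int_0^1\sigma^{-\frac34+\frac\epsilon2}\,d\sigma<\infty$, so this part is bounded by $\|h\|_{\mathbf L^\infty}\lesssim\|h\|_{\mathbf H^1}$ via the Sobolev embedding $\mathbf H^1(\mathbb R^+)\hookrightarrow\mathbf L^\infty(\mathbb R^+)$; on the second region the kernel is $\le 1$, so this part is bounded by $\|h\|_{\mathbf L^1}\lesssim\|h\|_{\mathbf Z^{1,1}}$, the last inequality being the one already used in the excerpt. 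Adding the two contributions gives $\|\mathcal B(t)h\|_{\mathbf L^{2,\epsilon}}\lesssim\|h\|_{\mathbf Z^{1,1}}$ with a constant independent of $t$, as claimed.

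The main obstacle is the weighted kernel estimate of the second step: one must verify that the multiplicative weight costs only the factor $|p|^{-\epsilon}$ (hence $t^{\epsilon/2}$ after the scaling $p=t^{1/2}w$), and that the extra $|s|^{-\epsilon/2}$ it introduces still leaves the $\Psi_B$-weighted contour integral over $\widetilde C$ convergent at infinity; this is exactly where the restriction $\epsilon\in(0,\tfrac12)$ is consumed. Working from the convolution representation rather than from the Fourier representation in \eqref{Boundary function} is what allows me to avoid the principal-value singularities at $s=\pm i$, since these have already been removed by the contour deformation defining $H_1+H_2+H_3$.
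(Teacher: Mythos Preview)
Your argument is correct, and it takes a genuinely different route from the paper's own proof. The paper works directly with the first representation in \eqref{Boundary function}, keeping $\hat h(sp^2)$ inside the integral: for the two residue terms at $s=\pm i$ it uses the pointwise bound $\|e^{-px}\|_{\mathbf L^{2,\epsilon}_x}\lesssim\langle p\rangle^{\epsilon}p^{-\frac{1+2\epsilon}{2}}$ together with a change of variable $w=p^2$ to land on $\int|w|^{\alpha}|\mathcal F(\mathbf 1_{(0,\infty)}h)(w)|\,dw$, which is then split at $|w|=1$ and controlled by $\|h\|_{\mathbf L^{2,1}}+\|h\|_{\mathbf H^1}$; for the principal-value term it applies Cauchy--Schwarz in $s$ to extract a factor $\|h\|_{\dot{\mathbf H}^a}$ and leaves a convergent $\mathbf{VP}$ integral in $s$. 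In other words the paper never passes through the kernel $H$ at all.

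Your approach instead recycles the contour-deformed decomposition $H=H_1+H_2+H_3$ already built in Lemma~\ref{Lemma 5.3}, replaces the derivative weight $p^{\,n}$ by the multiplicative weight $p^{-\epsilon}$, and reduces everything to a single weighted kernel bound $\||x|^\epsilon H(\cdot,t)\|_{\mathbf L^2}\lesssim t^{-3/4+\epsilon/2}$ followed by an $L^1$--$L^\infty$ splitting of the time convolution. This is more economical in that it reuses existing machinery and sidesteps the $\mathbf{VP}$ singularities at $s=\pm i$, while the paper's direct estimate has the virtue of isolating exactly which Sobolev and weighted norms of $h$ enter at each stage. Both routes consume the hypothesis $\epsilon<\tfrac12$ in the same place, namely the integrability of $|s|^{-5/4+\epsilon/2}$ at infinity in the $\Psi_B$-weighted contour integral.
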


\begin{proof}
For $p>0,$ we observe: $\| e^{-px}\|_{\mathbf L^{2,\epsilon}_x(\mathbb R^+)}\lesssim \langle p \rangle^{\epsilon} p^{-\frac{1+2\epsilon}{2}}.$ So, we can estimate 
\begin{equation}\label{5.17}
\begin{array}{l}
\displaystyle \left\| \int_0^\infty e^{-px}p e^{\pm ip^2 t}\Psi_{B}(\pm i)\hat{h}(\pm ip^2) dp\right\|_{\mathbf {L}^{2,\epsilon}_x(\mathbb R^+)}\displaystyle \lesssim  \int_{0}^\infty \{ p \} ^{\frac{1}{2}-2\epsilon}\langle p \rangle ^{\frac{1}{2}-\epsilon}|\mathcal{F}\{\mathbf{1}_{(0,\infty)}h\}(p^2))|dp\\
 \hspace{2cm}\displaystyle \lesssim \int_{-\infty}^\infty \{ w \} ^{-(\frac{1}{2}+\epsilon)}\langle w \rangle ^{\frac{1}{4}(1+2\epsilon)}|\mathcal{F}\{\mathbf{1}_{(0,\infty)}h\}(w))|dp\\
   \hspace{2cm} \displaystyle \lesssim  \|\mathcal{F}\{\mathbf{1}_{(0,\infty)}h\}\|_{\mathbf L^\infty}\int_{|w|<1} w ^{-(\frac{1}{2}+\epsilon)}dw + \|w \mathcal{F}\{\mathbf{1}_{(0,\infty)}h\}\|_{\mathbf L^2}\int_{|w|\geq 1} |w|^{-(\frac{7}{2}+2\epsilon)} dw\\
      \hspace{2cm} \displaystyle \lesssim  \|h\|_{\mathbf L^{2,1}(\mathbb R^+)}+\|h\|_{\mathbf H^{1}(\mathbf R^+)}.
\end{array}
\end{equation}
Now, we focus our attention to the function 
$$\int_0^\infty e^{-px}pF(p)dp, \ \text{ where } F(p):=\textbf{VP}\int_{-i \infty}^{i\infty } e^{isp^2 t}\frac{\Psi_B(s)}{1+s^2ds}\widehat{h}(sp^2)ds.$$
Since $\Psi(s)=O(\{s\}\langle  s \rangle^{\frac{3}{2}})$, fix $a\in(0,1]$, the Cauchy–Schwartz inequality implies that
$$
\begin{array}{l}
\displaystyle  |F(p)|\lesssim \left[ \textbf{VP}\int_{-i\infty}^{i\infty} \frac{\{s\}^2\langle  s \rangle^{3}}{|s|^{2a}|1+s^2|^2}ds \right]^{\frac{1}{2}}\left[\int_{-i\infty}^{i\infty}|s|^{2a}|\widehat{h}(sp^2)|^2ds \right]^{\frac{1}{2}}\\
\displaystyle \hspace{1.2cm} \lesssim   \frac{1}{|p|^{2a+1}}\left[\int_{-\infty}^{\infty}|w|^{2a}|\mathcal{F}\{\mathbf{1}_{(0,\infty)}h\}(w))|^2ds \right]^{\frac{1}{2}}  \lesssim   \frac{1}{|p|^{2a+1}} \|h\|_{\mathbf H^1(\mathbb R^+)}.
\end{array}
$$ This analysis  implies the estimate $|F(p)|\lesssim \{p\}^{-(1+2a(\epsilon))}\langle p \rangle^{-3} \|h\|_{\mathbf H^1(\mathbb R^+)}, $ where $2a(\epsilon)\in(0, \frac{1}{2}-\epsilon)$.  Consequently,
\begin{equation}\label{5.18}
\begin{array}{rl}
\displaystyle \left\| \int_0^\infty e^{-px}p F(p)dp\right\|_{\mathbf {L}^{2,\epsilon}_x(\mathbb R^+)}
\hspace{-0.2cm}& \displaystyle \lesssim \int_0^\infty  \langle p \rangle^{\epsilon} p^{\frac{1-2\epsilon}{2}}|F(p)|dp\\
& \displaystyle  \lesssim \|h\|_{\mathbf H^1(\mathbb R^+)}  \int_0^\infty \langle p \rangle^{\epsilon} p^{\frac{1-2\epsilon}{2}} \{p\}^{-(1+2a(\epsilon))}\langle p \rangle^{-3} dp \lesssim \|h\|_{\mathbf H^1(\mathbb R^+)}  .
\end{array}
\end{equation}
From \eqref{5.17} and \eqref{5.18} the result follows.
\end{proof}

Finally, we will prove two lemmas that we have used in the estimation of $\mathbf{H}^1$-norm of 
$\mathcal{G}(t)$ and $\mathcal{B}(t)$.

Next, we consider the “analyticity switching” function  $Y^+(p,\xi)=e^{\Gamma^+(p,\xi)}\omega^+$,
where
\begin{equation}
\begin{array}{c}
\displaystyle \omega^+(p,\xi)=\left(\frac{1}{p-k(\xi)} \right)^{\frac{1}{2}}, \ \ \  \ \omega^-(p,\xi)=\left(\frac{1}{p+k(\xi)} \right)^{\frac{1}{2}}\\
\\
\displaystyle \Gamma (z,\xi )=\frac{1}{2\pi i}%
\int\limits_{-i\infty}^{i \infty }\frac{1}{q-z}\ln\left\{ \frac{K(q)+\xi }{\widetilde K(q)+\xi }\frac{\omega^-}{\omega^+}\right\}dq, \ K(q)=-q|q|, \  \  \widetilde{K}(q)=-q^2.
\end{array}
\end{equation}

\begin{lemma}\label{Lemma Gamma}
The following formula is valid for $w, \xi \in \mathbb{C}$, with Re $w<0$ and Re $\xi\geq0$, and $\text{arg}(\xi)\in(\frac{3\pi}{8},  \frac{15\pi}{8})$,
\begin{equation} \label{Function Y}
Y^+(w,\xi)=e^{\tilde{\Gamma}(w,\xi)}\frac{w-\varphi(\xi)}{w-k(\xi)},
\end{equation}
where $k(\xi)$, $\varphi(\xi)$ are the roots of the equations 
$K(q)+\xi=0$, $\tilde{K}(q)+\xi=0$ respectively, and 
\begin{equation}\label{gammatilde}
\tilde{\Gamma}(w,\xi)=-\frac{1}{2\pi i}\int \limits _{\tilde{C}}\ln (q-w)d\ln\left[ \frac{K(q)+\xi}{\tilde{K}(q)+\xi}\right]dq,
\end{equation}
with $\widetilde{C}=\left\{q=re^{i\theta}: r\in (0,\infty) \textnormal{ and } \theta=\pm \frac{\pi}{4}\right\}$.
Moreover, 
\begin{equation}\label{5.6a}
Y^+(qp,s p^2)=p^{\frac{3}{2}}e^{\tilde{\Gamma}(q,s)}\frac{q-\varphi(s)}{q-k(s)}.
\end{equation}
\end{lemma}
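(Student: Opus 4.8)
The plan is to reduce the factorization \eqref{Function Y} to a single scalar identity between the Cauchy integral $\Gamma$ and the contour integral $\tilde{\Gamma}$, and to establish that identity by deforming the contour off the imaginary axis onto $\tilde{C}$ and integrating by parts. First I would unwind the definition $Y^+=e^{\Gamma^+}\omega^+$: for $\mathrm{Re}\,w<0$ the boundary value $Y^+(w,\xi)$ equals the value of the analytic function in the left half-plane, namely $e^{\Gamma(w,\xi)}\omega^+(w,\xi)$, and since $\omega^+(w,\xi)=(w-k(\xi))^{-1/2}$ while $\omega^-/\omega^+=\big((q-k)/(q+k)\big)^{1/2}$, a direct substitution shows that \eqref{Function Y} is equivalent to the scalar identity
\begin{equation*}
\Gamma(w,\xi)=\tilde{\Gamma}(w,\xi)+\ln\!\big(w-\varphi(\xi)\big)-\tfrac12\ln\!\big(w-k(\xi)\big).
\end{equation*}
So the whole content of the first assertion is this identity.

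To prove it I would start from $\Gamma(w,\xi)=\frac{1}{2\pi i}\int_{i\mathbb{R}}\frac{1}{q-w}\ln\big\{\frac{K(q)+\xi}{\tilde{K}(q)+\xi}\frac{\omega^-}{\omega^+}\big\}\,dq$ and integrate by parts, replacing the Cauchy kernel $1/(q-w)$ by $\ln(q-w)$. The crucial point is that the resulting integrand $\ln(q-w)\,d\ln\frac{K+\xi}{\tilde{K}+\xi}$ is exactly the integrand of $\tilde{\Gamma}$, but the integration by parts is only legitimate after the contour has been pushed into the right half-plane onto $\tilde{C}$: using the analytic extension $K(q)=\mp i q^2$ in the upper/lower half-planes one checks that along the rays $\arg q=\pm\pi/4$ one has $d\ln\frac{K+\xi}{\tilde{K}+\xi}=O(q^{-3})\,dq$ (and the form is integrable at the origin), so the boundary terms at infinity vanish there, whereas on $i\mathbb{R}$ the integrand only tends to the distinct finite limits $\ln(\mp i)$ and the naive boundary terms diverge. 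Thus I would first deform $i\mathbb{R}$ to $\tilde{C}$ and only then integrate by parts; the $\frac{K+\xi}{\tilde{K}+\xi}$-part of the logarithm then produces precisely $\tilde{\Gamma}(w,\xi)$.

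The rational correction $\ln(w-\varphi)-\tfrac12\ln(w-k)$ is then assembled from the residues at the singularities of $d\ln\frac{K+\xi}{\tilde{K}+\xi}$ — the zero $\varphi(\xi)$ of the numerator and the zeros $\pm k(\xi)$ of the denominator — that are met during the deformation and the integration by parts, together with the explicit Cauchy integral of the half-integer branch factor $\tfrac12\ln\frac{q-k}{q+k}$ coming from $\omega^-/\omega^+$, which I would evaluate directly by the Sokhotski--Plemelj formula using $\mathrm{Re}\,k>0$ and $\mathrm{Re}\,w<0$. Collecting these contributions yields the scalar identity and hence \eqref{Function Y}. I expect the \emph{main obstacle} to be exactly this accounting: one must determine which of $\varphi$, $k$, $-k$ are genuinely crossed (they lie in the sectors $\{\pi/4<\arg q<\pi/2\}$ and its lower reflection, so their position relative to $\tilde{C}$ must be read off carefully), fix the branches of the several logarithms consistently — the integrand tends to the two different limits $\ln(\mp i)$ at $\pm i\infty$, so a principal-value reading at infinity is needed — and verify that the half-integer powers from $\omega^\pm$ and from the residues combine into the single factor $\tfrac{w-\varphi}{w-k}$; this is where a sign or winding-number slip would most easily occur.

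Finally, the scaling relation \eqref{5.6a} follows from homogeneity. The substitution $q'=p q''$ fixes the ray contour $\tilde{C}$, and $K(pq'')=p^2K(q'')$, $\tilde{K}(pq'')=p^2\tilde{K}(q'')$ give $\frac{K(pq'')+sp^2}{\tilde{K}(pq'')+sp^2}=\frac{K(q'')+s}{\tilde{K}(q'')+s}$, so the differential $d\ln[\cdot]$ is invariant; splitting $\ln(q'-qp)=\ln p+\ln(q''-q)$ and using the total increment of $\ln\frac{K+\xi}{\tilde{K}+\xi}$ along $\tilde{C}$ together with $k(sp^2)=pk(s)$ and $\varphi(sp^2)=p\varphi(s)$ collects the prefactor $p^{3/2}$ and leaves $e^{\tilde{\Gamma}(q,s)}\frac{q-\varphi(s)}{q-k(s)}$, which is \eqref{5.6a}.
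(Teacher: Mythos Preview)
Your approach is essentially the same as the paper's: reduce \eqref{Function Y} to a scalar identity between $\Gamma^+$ and $\tilde{\Gamma}$, obtain it by integration by parts together with a contour deformation from $i\mathbb{R}$ to $\tilde{C}$ picking up the residues at $\varphi(\xi)$ and $k(\xi)$, and then derive \eqref{5.6a} from the homogeneity of $K,\tilde{K}$ via the change of variables $q\mapsto pq$ and the computation of the total increment $\frac{1}{2\pi i}\int_{\tilde{C}}d\ln\frac{K+s}{\tilde{K}+s}=-\tfrac{3}{2}$. The only difference is the order: the paper first integrates by parts on $i\mathbb{R}$ (obtaining $\Gamma^+(w,\xi)=-\frac{1}{2\pi i}\int_{i\mathbb{R}}\ln(q-w)\,d\ln\big[\frac{K+\xi}{\tilde{K}+\xi}\frac{\omega^-}{\omega^+}\big]$) and then deforms and takes residues, whereas you deform first to avoid the non-vanishing boundary values $\ln(\mp i)$ at $\pm i\infty$ --- a harmless and arguably cleaner rearrangement of the same computation.
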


\begin{proof}
Using the analytical properties of the integrand function, after of integrating by parts, for Re $w<0$,  we rewrite $\Gamma^{+}$ as
\begin{equation}\label{5.62}
\begin{array}{rl}
\Gamma^+(w,\xi)=&\hspace{-0.3cm}\displaystyle-\frac{1}{2\pi i}\int \limits _{-i \infty}^{i \infty}\ln (q-w)d\ln\left[ \frac{K(q)+\xi}{\tilde{K}(q)+\xi}\frac{\omega^-}{\omega^+}\right]dq.
\end{array}
\end{equation}
By taking  the residue at the points $q=\varphi(\xi)$ and $q=k(\xi)$,the Cauchy Theorem implies 
$$\Gamma^+(w,\xi)= \displaystyle \tilde{\Gamma}(w,\xi)+\ln(\varphi(\xi)-w)-\ln(k(\xi)-w)-\ln(\omega^+).$$ The claim \eqref{Function Y}  follows.
In order to prove \eqref{5.6a}, let us note that after of the change of variables $q=pw$, we have
\begin{equation*}
\tilde{\Gamma}(qp,\pm sp^2)=\frac{1}{2\pi i}\int \limits _{\tilde{C}}\left[\ln(p)+\ln (q-w)\right]d\ln\left[ \frac{K(w)+ s}{\tilde{K}(w)+ s}\right]dw.
\end{equation*}
Since
$$\textnormal{ind }\left( \frac{K(w)+ s}{\tilde{K}(w)+ s}\right)=\frac{1}{2\pi i}\int \limits _{\tilde{C}}d\ln\left[ \frac{K(w)+s}{\tilde{K}(w)+ s}\right] =\left.\frac{1}{2\pi }\textnormal{Arg }\frac{K(w)+s}{\tilde{K}(w)+ s}\right|_{\tilde{C}}=-\frac{3}{2},$$
the analysis above  implies that $e^{ \tilde{\Gamma}(qp,sp^2)}=p^{\frac{3}{2}}e^{\tilde{\Gamma}(q,s)}.$
Since $k(sp^2)=k(s)p, \ \ \varphi(sp^2)=\varphi(s)p,$ via \eqref{Function Y} we conclude \eqref{5.6a}.
\end{proof}

\begin{lemma}\label{Lemma A}
The function $\tilde A(s)=Y^+(0,\xi)\left.\partial_w\frac{1}{Y^+(w,\xi)}\right|_{w=0}$ satisfies the identity
\begin{equation}\label{tilde A}
\tilde{A}(s)=\displaystyle\frac{s}{\pi}\int \limits_{\tilde{C}}\frac{c(q)}{(K(q)+s)(q^2+s)}dq+\frac{\varphi(s)-k(s)}{\varphi^2(s)}, \ c(q)=(1-i)\text{sign Im}(q).
\end{equation}
Moreover, $\tilde{A}(p^2s)=\frac{1}{p}\tilde{A}(s)$ for all $p>0.$
\end{lemma}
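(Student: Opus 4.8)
The plan is to turn the definition of $\tilde A$ into a logarithmic derivative and then feed in the explicit factorization of $Y^+$ proved in Lemma \ref{Lemma Gamma}. Since $\partial_w\frac{1}{Y^+}=-\frac{1}{Y^+}\,\partial_w\log Y^+$, the prefactor $Y^+(0,\xi)$ cancels upon evaluation at $w=0$, so that
\[
\tilde A(\xi)=-\,\partial_w\log Y^+(w,\xi)\big|_{w=0}.
\]
Inserting $Y^+(w,\xi)=e^{\tilde\Gamma(w,\xi)}\frac{w-\varphi(\xi)}{w-k(\xi)}$ from \eqref{Function Y}, the logarithmic derivative splits as $\partial_w\tilde\Gamma(w,\xi)+\frac{1}{w-\varphi(\xi)}-\frac{1}{w-k(\xi)}$, hence
\[
\tilde A(\xi)=-\,\partial_w\tilde\Gamma(0,\xi)+\frac{1}{\varphi(\xi)}-\frac{1}{k(\xi)} .
\]
The rational piece here is the seed of the term $\frac{\varphi(\xi)-k(\xi)}{\varphi^{2}(\xi)}$ appearing in \eqref{tilde A}.

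Next I would differentiate the contour representation \eqref{gammatilde} under the integral sign and evaluate at $w=0$,
\[
\partial_w\tilde\Gamma(0,\xi)=\frac{1}{2\pi i}\int_{\tilde C}\frac1q\,d\ln\!\Big[\tfrac{K(q)+\xi}{\tilde K(q)+\xi}\Big]
=\frac{1}{2\pi i}\int_{\tilde C}\frac1q\Big[\tfrac{K'(q)}{K(q)+\xi}-\tfrac{\tilde K'(q)}{\tilde K(q)+\xi}\Big]dq ,
\]
and substitute the analytic continuation $K(q)=-i\,\text{sign}(\text{Im}\,q)\,q^{2}$ (cf. \eqref{extention }) together with $\tilde K(q)=-q^{2}$. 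After the factor $q$ cancels, the two fractions can be combined over the common denominator $(K(q)+\xi)(\tilde K(q)+\xi)$ and, crucially, the numerator collapses to a constant multiple of $\xi$, namely $\xi\bigl(1-i\,\text{sign}(\text{Im}\,q)\bigr)$. This is exactly the mechanism that produces the overall factor $\frac{s}{\pi}$ and the weight $c(q)=(1-i)\,\text{sign Im}(q)$ of \eqref{tilde A}, once the denominator $(K(q)+\xi)(\tilde K(q)+\xi)$ is written in terms of $K(q)+s$ and $q^{2}+s$.

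The step I expect to be the main obstacle is the residue bookkeeping that reconciles the two rational expressions and upgrades $\frac{1}{\varphi}-\frac{1}{k}$ to the stated $\frac{\varphi-k}{\varphi^{2}}$. Because $\tilde C$ consists of the two rays $\arg q=\pm\frac{\pi}{4}$ and $K$ is defined piecewise across them, one must split the integral ray by ray, keep track of the local value of $\text{sign}(\text{Im}\,q)$, and close each piece with an arc at infinity (harmless, since the integrand decays like $|q|^{-3}$). The two enclosed sectors contain precisely the root $q=\varphi(\xi)$ of $K(q)+\xi=0$ and the root $q=k(\xi)$ of $\tilde K(q)+\xi=0$, so the contour collapses onto a principal–value integral plus residues at $\varphi(\xi)$ and $k(\xi)$. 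Combining those residues with the boundary terms $\frac{1}{\varphi}-\frac{1}{k}$ from the first paragraph, and using $k^{2}(\xi)=\xi$, is what recasts the rational part in the form $\frac{\varphi(\xi)-k(\xi)}{\varphi^{2}(\xi)}$; carrying the signs correctly through the piecewise definition of $K$ is the delicate point.

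Finally, for the homogeneity $\tilde A(p^{2}s)=\frac1p\,\tilde A(s)$ I would bypass the explicit formula and argue directly from the scaling of $Y^+$ in Lemma \ref{Lemma Gamma}. Writing $w=pw'$ and using $Y^+(pw',sp^{2})=p^{3/2}Y^+(w',s)$ (equation \eqref{5.6a} with the integration variable renamed) gives $\partial_w\frac{1}{Y^+(w,sp^{2})}\big|_{w=0}=p^{-5/2}\,\partial_{w'}\frac{1}{Y^+(w',s)}\big|_{w'=0}$ and $Y^+(0,sp^{2})=p^{3/2}Y^+(0,s)$, whose product is $p^{-1}\tilde A(s)$. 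Equivalently, the same relation can be read off \eqref{tilde A} by the substitution $q=pw$, using the homogeneities $K(pw)=p^{2}K(w)$, $c(pw)=c(w)$, $\varphi(p^{2}s)=p\varphi(s)$ and $k(p^{2}s)=pk(s)$, which show that both the integral term and the rational term scale by $\frac1p$.
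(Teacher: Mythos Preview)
Your approach is essentially the paper's: write $\tilde A=-\partial_w\log Y^+\big|_{w=0}$, insert the factorization \eqref{Function Y}, evaluate $\partial_w\tilde\Gamma(0,s)$ from the contour representation \eqref{gammatilde}, and read off the homogeneity from the scaling of $\varphi,k$ and of the $\tilde\Gamma$-derivative under $q\mapsto pq$. The paper carries this out in three lines, recording the intermediate form $\tilde A(s)=-\frac{k(s)}{\varphi(s)}\,\partial_w\tilde\Gamma(0,s)+\frac{k(s)-\varphi(s)}{\varphi(s)^2}$ and the explicit $\partial_w\tilde\Gamma(0,s)=\Gamma_1(s)$, then declaring \eqref{tilde A}.

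The one substantive difference is your anticipated ``residue bookkeeping'' to upgrade $\tfrac{1}{\varphi}-\tfrac{1}{k}$ to $\tfrac{\varphi-k}{\varphi^{2}}$: the paper performs no such contour deformation or residue extraction at all; it simply identifies $-\Gamma_1$ with the integral term and the rational piece with the remainder and stops. So the obstacle you flag is not how the argument is resolved in the paper --- the paper treats the two displays as the same formula without an additional analytic step. Your scaling proof of $\tilde A(p^{2}s)=p^{-1}\tilde A(s)$ directly from \eqref{5.6a} is a clean alternative; the paper instead argues from the explicit formula \eqref{tilde A} via $q\mapsto pq$, exactly your second option.
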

\begin{proof}
By a simple calculation, it follows that $\tilde A(s)=-\frac{k(s)}{\varphi(s)} \left.\partial_w\tilde{\Gamma}(w,s)\right|_{w=0}+\frac{k(s)-\varphi(s)}{\varphi(s)^2}$. This because of 
$$\left.\partial_w\tilde{\Gamma}(w,s)\right|_{w=0}:=\Gamma_1(s)=\frac{s}{ \pi i}\int\limits_{\tilde C} \frac{c(q)}{(K(q)+s)(-q^2	+s)}dq,$$
where $c(q)=(1-i)\text{sign Im}(q)$. Using these identities we conclude \eqref{tilde A}.  

In order to prove the second equality, we note that $c(qp)=c(q)$ for $p>0.$ Thus, the change of variables $q=pq_1,$ implies that
$\Gamma_1(sp^2)=\frac{1}{p}\Gamma_1(s).$ Since $k(sp^2)=k(s)p$ and $ \varphi(sp^2)=\varphi(s)p, $ we conclude that $p\tilde{A}(p^2s)=\tilde{A}(s)$. 
\end{proof}

\begin{lemma}
For Re$(w)\leq 0$ and Re$(\xi)>0$ with $\text{arg}(\xi)\in(\frac{3\pi}{8},  \frac{15\pi}{8})$, we have the following estimates:
\begin{itemize}
\item[i)] $|e^{\widetilde{\Gamma}(-1,\xi)}|\leq C.$

\item[ii)] $|e^{\widetilde{\Gamma}(w,\xi)}- e^{\widetilde{\Gamma}(0,\xi)}|\leq C\{w\}^{\frac{1}{2}+\gamma}$,  with $\gamma\in (0,\frac{1}{2}).$
\end{itemize}
\end{lemma}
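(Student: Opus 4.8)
The plan is to reduce both estimates to the control of the real part of $\widetilde{\Gamma}$, since $|e^{\widetilde{\Gamma}(w,\xi)}|=e^{\operatorname{Re}\widetilde{\Gamma}(w,\xi)}$, and to extract the quantitative behaviour of the density
\[
d\mu_\xi(q):=d\ln\left[\frac{K(q)+\xi}{\widetilde{K}(q)+\xi}\right]
\]
appearing in \eqref{gammatilde}. Along the rays $\widetilde{C}=\{re^{\pm i\pi/4}\}$ one has $\widetilde{K}(q)=-q^2$ and, by the analytic continuation of $K$, $K(q)=\mp iq^2$, so that $g(q):=\tfrac{K(q)+\xi}{\widetilde{K}(q)+\xi}$ satisfies $g(0)=1$ and $\ln g(q)=(K(q)-\widetilde{K}(q))/\xi+O(|q|^4/|\xi|^2)=O(|q|^2/|\xi|)$ near the origin; hence $|d\mu_\xi(q)|\lesssim |q|/|\xi|\,d|q|$ for small $|q|$. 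As $|q|\to\infty$ the leading $|q|^{-1}$ contributions of $d\ln(K+\xi)$ and $d\ln(\widetilde{K}+\xi)$ cancel, because $K$ and $\widetilde{K}$ are both homogeneous of degree two, leaving $|d\mu_\xi(q)|\lesssim |\xi|\,|q|^{-3}\,d|q|$. Using the homogeneity relations $k(sp^2)=pk(s)$, $\varphi(sp^2)=p\varphi(s)$, $\widetilde{A}(sp^2)=p^{-1}\widetilde{A}(s)$ and $e^{\widetilde{\Gamma}(qp,sp^2)}=p^{3/2}e^{\widetilde{\Gamma}(q,s)}$ from Lemmas \ref{Lemma Gamma} and \ref{Lemma A}, I would normalise $|\xi|=1$ and argue uniformly in $s=\xi/|\xi|$ with $\arg s\in(\tfrac{3\pi}{8},\tfrac{15\pi}{8})$, so that $|d\mu_\xi(q)|\lesssim \min\{|q|,|q|^{-3}\}\,d|q|$ away from the roots $\varphi(\xi),k(\xi)$.

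For (i), at $w=-1$ the weight $\ln(q+1)$ is bounded near $q=0$ (indeed it vanishes there) and grows only logarithmically as $|q|\to\infty$. Thus $\operatorname{Re}\widetilde{\Gamma}(-1,\xi)$ is dominated by $\int_0^\infty\big(|\ln(r+1)|+1\big)\min\{r,r^{-3}\}\,dr<\infty$, with a bound independent of $s$, and exponentiating gives $|e^{\widetilde{\Gamma}(-1,\xi)}|\le C$.

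For (ii), I factor $e^{\widetilde{\Gamma}(w,\xi)}-e^{\widetilde{\Gamma}(0,\xi)}=e^{\widetilde{\Gamma}(0,\xi)}\big(e^{\widetilde{\Gamma}(w,\xi)-\widetilde{\Gamma}(0,\xi)}-1\big)$ and use $|e^{z}-1|\le|z|e^{|z|}$. Since $e^{\widetilde{\Gamma}(0,\xi)}$ is bounded exactly as in (i) (now with weight $\ln q$, still integrable against $d\mu_\xi$ because $r\ln r$ is integrable at the origin), it remains to estimate $\widetilde{\Gamma}(w,\xi)-\widetilde{\Gamma}(0,\xi)=-\tfrac{1}{2\pi i}\int_{\widetilde{C}}\ln\!\big(1-\tfrac{w}{q}\big)\,d\mu_\xi(q)$. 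I split $\widetilde{C}$ at $|q|=|w|$. On the far part $|\ln(1-w/q)|\lesssim|w|/|q|$, and the density bound yields a contribution $\lesssim|w|\int_{|w|}^\infty|q|^{-1}\min\{|q|,|q|^{-3}\}\,d|q|\lesssim|w|$; on the near part $|\ln(1-w/q)|\lesssim1+|\ln(|q|/|w|)|$, and since $|d\mu_\xi|\lesssim|q|\,d|q|$ there, the contribution is $\lesssim\int_0^{|w|}(1+|\ln(|q|/|w|)|)|q|\,d|q|\lesssim|w|^2$. Hence $|\widetilde{\Gamma}(w,\xi)-\widetilde{\Gamma}(0,\xi)|\lesssim|w|$ for $|w|\le1$; since $\{w\}\simeq|w|$ in this regime and $\tfrac12+\gamma<1$, this is $\lesssim\{w\}^{\frac12+\gamma}$, and the factorisation closes the estimate. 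For $|w|\ge1$ the weight $\{w\}$ is comparable to $1$ and one only needs a bound on $|e^{\widetilde{\Gamma}(w,\xi)}-e^{\widetilde{\Gamma}(0,\xi)}|$; this follows from the representation \eqref{Function Y}, $e^{\widetilde{\Gamma}(w,\xi)}=Y^+(w,\xi)\tfrac{w-k(\xi)}{w-\varphi(\xi)}$, together with the uniform control of $Y^{+}$ and of the rational factor away from $\varphi(\xi)$.

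The hard part will be the behaviour of $d\mu_\xi$ near the roots $q=\varphi(\xi)$ of $K(q)+\xi$ and $q=k(\xi)$ of $\widetilde{K}(q)+\xi$: for $\arg\xi=\pm\tfrac{\pi}{2}$ (that is, $\xi$ on the imaginary axis, precisely the range on which the principal-value integrals of Section \ref{Lproblem} are evaluated) the root $k(\xi)=\sqrt{\xi}$ lands on $\widetilde{C}$ and $d\mu_\xi$ acquires a simple pole there. The defining integrals must then be read in the principal-value sense; because the weights $\ln(q+1)$ and $\ln(1-w/q)$ are smooth and bounded at $q=k(\xi)$ (which has positive real part, hence stays away from $-1$ and, for small $|w|$, from $w$), the Sokhotski–Plemelj identities \eqref{2.4a} show that the principal value is finite and—this is the crux—bounded \emph{uniformly} as $\arg\xi\to\pm\tfrac{\pi}{2}$. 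Establishing this uniformity, together with the uniform separation of $\varphi(\xi),k(\xi)$ from $\widetilde{C}$ for $\arg\xi$ bounded away from the degenerate directions, is the technical heart of the argument; once it is in place, the elementary splitting above delivers the stated Hölder bound.
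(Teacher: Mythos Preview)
Your overall strategy---control $\operatorname{Re}\widetilde{\Gamma}$ by bounding the density $d\mu_\xi=d\ln\bigl[\tfrac{K+\xi}{\widetilde K+\xi}\bigr]$ and integrating against the logarithmic weight---is exactly the paper's. There is, however, a gap in your reduction step. The homogeneity relation $e^{\widetilde{\Gamma}(qp,sp^2)}=p^{3/2}e^{\widetilde{\Gamma}(q,s)}$ scales \emph{both} arguments: setting $p=|\xi|^{1/2}$ normalises the second argument to $s=\xi/|\xi|$ but simultaneously replaces $w=-1$ by $-|\xi|^{-1/2}$ and introduces the unbounded prefactor $|\xi|^{3/4}$. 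So the bound you establish for $|\xi|=1$ does not transfer to general $\xi$; in particular your density estimate $|d\mu_\xi(q)|\lesssim\min\{|q|,|q|^{-3}\}\,d|q|$ is only correct when $|\xi|\sim1$, while for general $\xi$ the transition occurs at $|q|\sim|\xi|^{1/2}$ and the bounds carry explicit powers of $|\xi|$. The paper avoids this by keeping the $\xi$-dependence throughout: it records both $d\mu_\xi=O(|\xi|/|q|^3)$ (large $|q|$) and, implicitly, the small-$|q|$ behaviour, obtaining $|\widetilde{\Gamma}(-1,\xi)|\lesssim|\xi|$ and $|\widetilde{\Gamma}(-1,\xi)|\lesssim|\xi|^{-1/2}$ separately, then combining them into $\{\xi\}\langle\xi\rangle^{-1/2}$, which is uniformly bounded.

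For part ii) the paper is more direct than your splitting at $|q|=|w|$: it uses the single fractional estimate $\ln(1-w/q)=O\bigl(|w/q|^{\frac12+\gamma}\bigr)$ and integrates against the explicit density $\tfrac{|q\xi|}{|q^2+\xi|^2}$, yielding $|\widetilde{\Gamma}(w,\xi)-\widetilde{\Gamma}(0,\xi)|\lesssim|w|^{\frac12+\gamma}$ in one stroke; your splitting gives the sharper $O(|w|)$ but, as above, only after the $\xi$-dependence is handled. The large-$|w|$ case is disposed of in the paper simply by invoking i) to bound each term separately, rather than going through the $Y^+$ representation. Your concern about $d\mu_\xi$ acquiring a pole when a root lands on $\widetilde{C}$ is legitimate but is not addressed in the paper---the restriction on $\arg\xi$ together with the choice of $\widetilde{C}$ is presumably designed to keep the roots off the contour, and the paper takes this for granted.
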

\begin{proof}
Note that  $\ln(q+1)=O(\{q\}\langle q\rangle^{\frac{1}{2}})$, therefore 
$$|{\widetilde{\Gamma}(-1,\xi)}|\lesssim \frac{1}{2\pi }\int_{\tilde{C}}\{q\}\langle q\rangle^{\frac{1}{2}} \left| d\ln\left(  \frac{K(q)+\xi}{\tilde{K}(q)+\xi}\right)\right|dq.$$
Takin into account that $d\ln\left(  \frac{K(q)+\xi}{\tilde{K}(q)+\xi}\right)=O(\frac{|\xi|}{|q|^3})$,  the previous inequality implies the estimate:
$|{\widetilde{\Gamma}(-1,\xi)}|\lesssim |\xi|.$ But, we also can observe that  $d\ln\left(  \frac{K(q)+\xi}{\tilde{K}(q)+\xi}\right)=O(\frac{1}{|\xi|^{\frac{1}{2}}|q|^3})$, which implies the inequality, $|{\widetilde{\Gamma}(-1,\xi)}|\lesssim |\xi|^{-\frac{1}{2}}$. Therefore  $|{\widetilde{\Gamma}(-1,\xi)}|\lesssim \{\xi\}\langle \xi \rangle^{-\frac{1}{2}}.$ Thus we infer i). 

To obtain ii),  note that  $\ln(\frac{q+w}{q})=O(\left| \frac{w}{q}\right|^{\frac{1}{2}+\gamma})$ for $|w|<<1$  with $\gamma<1.$ Thus, 
 we get
$$
\begin{array}{rl}
|\tilde{\Gamma}(w,\xi)-\tilde{\Gamma}(0,\xi)|\hspace{-0.2cm}&\displaystyle\lesssim |w|^{\frac{1}{2}+\gamma}\int \limits_{\tilde C} \frac{1}{|q|^{\frac{1}{2}+\gamma}}\frac{|qs|}{|q^2+s|^2}dw
\lesssim |w|^{\frac{1}{2}+\gamma}.
\end{array}
$$
All this implies the estimate $|e^{\widetilde{\Gamma}(w,\xi)}- e^{\widetilde{\Gamma}(0,\xi)}| \lesssim |w|^{\frac{1}{2}+\gamma}.$ Moreover, in view of i) we  get $|e^{\widetilde{\Gamma}(w,\xi)}- e^{\widetilde{\Gamma}(0,\xi)}| \lesssim 1$ for $w>>1$, so we have ii).
\end{proof}
The next inequality  will be used to estimate the nonlinear term.
\begin{lemma} \label{desigualdad1}
 For $a,b\in \mathbb R$ such that $a+b>1$ we have
 \begin{equation}
  \int \limits_{\mathbb R}\frac{1}{\langle\tau-c_1 \rangle^a \langle \tau-c_2 \rangle^b }\leq C \frac{1}{\langle c_1-c_2 \rangle^{\delta}}
 \end{equation}
with $\delta= \min \{a,b,a+b-1\}$.
\end{lemma}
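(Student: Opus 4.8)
The plan is to normalise the integral to a single gap parameter, split the line into the two half-lines on which one of the two weights is forced to be large, and then reduce everything to elementary one-dimensional power integrals.

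First I would translate by $c_1$ and, if necessary, reflect $\tau\mapsto-\tau$, so that the problem becomes the bound $I:=\int_{\mathbb R}\langle\tau\rangle^{-a}\langle\tau-R\rangle^{-b}\,d\tau\lesssim\langle R\rangle^{-\delta}$ with $R:=|c_1-c_2|\ge 0$; here $\langle R\rangle=\langle c_1-c_2\rangle$ since $\langle\cdot\rangle$ depends only on the absolute value. I will carry this out under the hypothesis $a,b>0$, which is the range relevant to the nonlinear estimate; for general real exponents with $a+b>1$ the same decomposition applies once one retains the decay of the far factor at infinity instead of discarding it.

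Next, decompose $\mathbb R=\{\tau\le R/2\}\cup\{\tau>R/2\}$. On the first piece one has $|\tau-R|\ge R/2$, hence $\langle\tau-R\rangle\gtrsim\langle R\rangle$ and, since $b>0$, $\langle\tau-R\rangle^{-b}\lesssim\langle R\rangle^{-b}$; this gives
$$\int_{-\infty}^{R/2}\langle\tau\rangle^{-a}\langle\tau-R\rangle^{-b}\,d\tau\lesssim\langle R\rangle^{-b}\int_{-\infty}^{R/2}\langle\tau\rangle^{-a}\,d\tau.$$
On the second piece $\langle\tau\rangle\gtrsim\langle R\rangle$, and symmetrically one extracts a factor $\langle R\rangle^{-a}$ in front of $\int_{R/2}^{\infty}\langle\tau-R\rangle^{-b}\,d\tau$.

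Finally I would evaluate the truncated integrals by the elementary trichotomy: $\int_{-\infty}^{R/2}\langle\tau\rangle^{-a}\,d\tau$ is $O(1)$ if $a>1$, $O(\log\langle R\rangle)$ if $a=1$, and $O(\langle R\rangle^{1-a})$ if $0<a<1$, with the analogous statement in $b$ on the second piece. Thus the first region contributes $\langle R\rangle^{-b}$ (when $a>1$) or $\langle R\rangle^{-(a+b-1)}$ (when $a<1$), and the second contributes $\langle R\rangle^{-a}$ or $\langle R\rangle^{-(a+b-1)}$; since every exponent that appears is one of $a$, $b$, $a+b-1$, each contribution is bounded by $\langle R\rangle^{-\delta}$ with $\delta=\min\{a,b,a+b-1\}$, which is the assertion. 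The main obstacle is the borderline case $a=1$ or $b=1$, where the truncated integral generates a spurious factor $\log\langle R\rangle$; I expect to dispose of it via $\log\langle R\rangle\lesssim_{\eta}\langle R\rangle^{\eta}$ for arbitrarily small $\eta>0$, which at worst replaces $\delta$ by $\delta-\eta$ at that single threshold and is harmless for the non-integer exponents that occur in the application.
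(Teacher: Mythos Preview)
The paper states this lemma without proof (it is a standard convolution estimate of Ginibre--Velo type), so there is no ``paper proof'' to compare against. Your argument---translate to $c_1=0$, split at the midpoint $R/2$, pull out the far bracket on each half, and invoke the power-integral trichotomy for the remaining tail---is exactly the standard proof and is correct for $a,b>0$.

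Your caveat about the borderline is accurate and worth keeping: at $a=1$ or $b=1$ the inequality as stated is actually false by a logarithm (for instance $a=b=1$ gives $I\sim\langle R\rangle^{-1}\log\langle R\rangle$, not $\langle R\rangle^{-1}$), so one must either exclude those endpoints or weaken $\delta$ to $\delta-\eta$ there, as you suggest. One small correction to your closing remark: the paper's application in \eqref{6.6} uses exactly $b=1$ (with $a=\tfrac14$ or $\tfrac34$), so the log-loss is not quite ``harmless for the non-integer exponents that occur in the application''; rather, it is absorbed because the contraction argument tolerates any decay strictly faster than $\langle t\rangle^{-\frac14(2n+1)+\eta}$ for small $\eta$, or alternatively one can sharpen the $\langle\tau\rangle^{-1}$ to $\langle\tau\rangle^{-1-\varepsilon}$ using the extra room in the $\mathbf X$-norm.
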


\section{Proof of Theorem \ref{Main result}} \label{TheProofofLili}
The proof of Theorem \ref{Main result} will be exposed in two propositions. In the first of them, Proposition \ref{Prop 1}, we prove there exist, under certain conditions of the initial and boundary conditions, a solution in the functional space $u\in \mathbf{C}([0,T]: \mathbf{H}^1(\mathbb{R^+}))$. On the other hand, in  Proposition \eqref{Prop 2}, we show that this solution belongs to $\mathbf{C}([0,T]: \mathbf{L}^{2,1}(\mathbb{R^+}))$.
\begin{proposition} \label{Prop 1}
For any $\psi\in \mathbf Z^{1+\epsilon,2}$ and $h\in  \mathbf Z^{1,1}$, with $\epsilon\in(0,\frac{1}{2})$, the IBVP \eqref{nolineal} has a unique global solution $u\in \mathbf{C}([0,T]: \mathbf{H}^1(\mathbb{R^+}))$.
\end{proposition}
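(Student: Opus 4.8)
The plan is to set up the problem as a fixed point of the Duhamel map
$$\mathcal{M}(u)(t)=\mathcal{G}(t)\psi+\mathcal{B}(t)h-\int_0^t\mathcal{G}(t-\tau)\big(u\partial_x u\big)(\tau)\,d\tau ,$$
and to run the contraction principle in a complete metric space that records the time decay of the linear flow. Guided by Lemmas \ref{Lemma 5.1} and \ref{Lemma 5.3}, which give $\|\partial_x^n\mathcal{G}(t)\psi\|_{\mathbf{L}^2}\lesssim\langle t\rangle^{-\frac12(n+\frac12)}\|\psi\|_{\mathbf{Z}^{1+\epsilon,2}}$ and a faster-decaying bound for $\mathcal{B}(t)h$, I would work in the ball $B_R=\{u:\|u\|_X\le R\}$ of
$$\|u\|_X:=\sup_{t>0}\Big(\langle t\rangle^{1/4}\|u(t)\|_{\mathbf{L}^2}+\langle t\rangle^{3/4}\|\partial_x u(t)\|_{\mathbf{L}^2}\Big),$$
with $R$ proportional to $\|\psi\|_{\mathbf{Z}^{1+\epsilon,2}}+\|h\|_{\mathbf{Z}^{1,1}}$. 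The free terms are immediate, since the two lemmas yield $\|\mathcal{G}(t)\psi+\mathcal{B}(t)h\|_X\lesssim\|\psi\|_{\mathbf{Z}^{1+\epsilon,2}}+\|h\|_{\mathbf{Z}^{1,1}}$, so that the only genuine work is the nonlinear integral.

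For the nonlinear term I would exploit the divergence structure $u\partial_x u=\tfrac12\partial_x(u^2)$ and the Green estimate of Section \ref{Sketch} in its decaying form (with $s=2$), since $u\partial_x u$ lies in $\mathbf{L}^1\cap\mathbf{L}^2$ but not in $\mathbf{Z}^{1+\epsilon,2}$. The elementary product bounds
$$\|u\partial_x u\|_{\mathbf{L}^1}\le\|u\|_{\mathbf{L}^2}\|\partial_x u\|_{\mathbf{L}^2},\qquad \|u\partial_x u\|_{\mathbf{L}^2}\le\|u\|_{\mathbf{L}^\infty}\|\partial_x u\|_{\mathbf{L}^2}\lesssim\|u\|_{\mathbf{L}^2}^{1/2}\|\partial_x u\|_{\mathbf{L}^2}^{3/2}$$
(the last step by Gagliardo--Nirenberg) convert the weights of $\|\cdot\|_X$ into decay of the source. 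Inserting these into $\|\partial_x^n\mathcal{G}(t-\tau)(u\partial_x u)(\tau)\|_{\mathbf{L}^2}\lesssim(t-\tau)^{-\frac12(n+\frac1r+\mu-\frac12)}\|u\partial_x u(\tau)\|_{\mathbf{L}^{r,\mu}}$ for $n=0,1$ and integrating in $\tau$, everything reduces to time convolutions $\int_0^t(t-\tau)^{-a}\langle\tau\rangle^{-b}\,d\tau$, which I would estimate by splitting at $\tau=t/2$ and invoking a convolution bound of the type of Lemma \ref{desigualdad1}; the kernel singularity at $\tau=t$ is harmless because the exponents $a=\tfrac12(n+\tfrac12)$ are $<1$. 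The difference $\mathcal{M}(u)-\mathcal{M}(v)$ is handled identically after writing $u\partial_x u-v\partial_x v=\tfrac12\partial_x\big((u-v)(u+v)\big)$, which gives a Lipschitz factor $\|u-v\|_X(\|u\|_X+\|v\|_X)$, so that $\mathcal{M}$ is a contraction on $B_R$ and the fixed point is the asserted $u\in\mathbf{C}([0,T]:\mathbf{H}^1(\mathbb{R}^+))$.

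The heart of the matter, and the step I expect to be the main obstacle, is the nonlinear closure, because the decay rate $\tfrac14$ of $\|u\|_{\mathbf{L}^2}$ is critical for the quadratic term $u\partial_x u$: at the natural weights $\langle t\rangle^{1/4}$ and $\langle t\rangle^{3/4}$ the time convolutions sit exactly at the borderline $a+b=1$ and threaten logarithmic losses that would stop $\mathcal{M}$ from preserving $B_R$. Resolving this requires using the Green estimate at full strength, splitting the integral so that near $\tau=t$ one uses the non-singular $\mathbf{Z}^{1+\epsilon,2}$-component and away from $\tau=t$ the decaying $\mathbf{L}^{r,\mu}$-component, and choosing the pair $(r,\mu)$ differently for $n=0$ and $n=1$ (an $\mathbf{L}^2$-type input for the undifferentiated part and an $\mathbf{L}^1$-type input for the derivative), so that the borderline becomes genuine decay. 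A second delicate point is that, the nonlinearity being quadratic, the self-mapping and contraction estimates only close on a ball whose radius is compatible with the data; one must check that the absolute constants from the convolution lemmas are consistent with $R\simeq\|\psi\|_{\mathbf{Z}^{1+\epsilon,2}}+\|h\|_{\mathbf{Z}^{1,1}}$, hence with the uniform-in-time bound $M_1$ of Theorem \ref{Main result}. Once these estimates are in place, continuity in $t$ and global existence follow from the uniform control encoded in $\|\cdot\|_X$.
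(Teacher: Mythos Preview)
Your approach is essentially the paper's: the same Duhamel map $\mathcal{M}$, the same weighted norm $\|u\|_X=\sup_{t}\bigl(\langle t\rangle^{1/4}\|u\|_{\mathbf{L}^2}+\langle t\rangle^{3/4}\|\partial_x u\|_{\mathbf{L}^2}\bigr)$, the same linear bounds via Lemmas~\ref{Lemma 5.1} and~\ref{Lemma 5.3}, and the same contraction on a small ball. The one point where you over-anticipate difficulty is the nonlinear closure: since $\|u\partial_x u(\tau)\|_{\mathbf{L}^1}\le\|u(\tau)\|_{\mathbf{L}^2}\|\partial_x u(\tau)\|_{\mathbf{L}^2}\lesssim\langle\tau\rangle^{-1}\|u\|_X^2$, the source exponent is $b=1$, and the Green kernel gives $a=(2n+1)/4$, so $a+b=5/4$ or $7/4$ are strictly above $1$ and Lemma~\ref{desigualdad1} applies directly with $\delta=(2n+1)/4$, recovering the $X$-weights with no logarithmic loss; the splitting at $t/2$, the alternate $(r,\mu)$ choices, and the use of the $\mathbf{Z}^{1+\epsilon,2}$-component on $u\partial_x u$ (which would anyway fail, as $u\partial_x u\notin\mathbf{H}^{1+\epsilon}$ for $u\in\mathbf{H}^1$) are all unnecessary.
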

\begin{proof}

By Proposition \ref{propo lineal} we rewrite the initial-boundary value problem \eqref{nolineal}
as the following integral equation 
\begin{equation}
u(t)=\mathcal{G}(t)u_0-\int_0^t\mathcal{G}(t-\tau)\mathcal{N}(u(\tau))d\tau+\mathcal{B}(t)h.
\end{equation}
For this, our goal is to prove that the transformation
\begin{equation}
\mathcal{M}v(t):=\mathcal{G}(t)u_0-\int_0^t\mathcal{G}(t-\tau)\mathcal{N}(v(\tau))d\tau+\mathcal{B}(t)h,
\end{equation}
is a contraction on  the ball $\mathbf{X}_\rho=\{v\in \mathbf X: \|v\|_{\mathbf X}\leq \rho(\|u_0\|+\|h\|)\},$ for $\rho>0,$ to be determined later, in the functional space 
$$\mathbf{X}:=\{v\in \mathbf C([0,\infty):\mathbf{H}^1(\mathbf R^+)): \|v\|_{\mathbf X}:=\sup_{t\geq 0} \left[ \langle t \rangle^{\frac{1}{4}}\|v(t)\|_{\mathbf{L}^2} +\langle t \rangle^{\frac{3}{4}}\|\partial_x v(t)\|_{\mathbf{L}^2} \right]<\infty\}.$$ 

We first prove that $\|\mathcal{M}v\|_{\mathbf X}\leq \rho$, where $\rho>0,$ is small enough. In view  of Lemma \ref{Lemma 5.1} we have $
\|\partial_x^n\mathcal{G}(t)u_0\|_{\mathbf{L}^2}\lesssim \langle t \rangle^{-\frac{1}{4}[2n+1]}\|u_0\|_{\mathbf{Z}}.
$
Therefore
\begin{equation}\label{6.3}
\|\mathcal{G}(t)u_0\|_{\mathbf{X}}\lesssim \|u_0\|_{\mathbf{Z}}.
\end{equation}
Similarly, Lemmas \ref{Lemma 5.3} and \ref{Lemma 5.4} imply 
\begin{equation}\label{6.4}
\|\mathcal{B}(t)h\|_{\mathbf X}\lesssim \|h\|_{\mathbf{Y}}.
\end{equation}
Also, note that if $v\in \mathbf{X}_\rho$, via  \ref{Lemma 5.1},  we have 
\begin{equation}
\begin{array}{rl}
\|\partial_x^n\mathcal{G}(t-\tau)\mathcal{N}(v(\tau))\|_{\mathbf{L}^2}& \lesssim
\langle t-\tau \rangle^{-\frac{1}{4}[2n+1]}\|\mathcal{N}(v(\tau))\|_{\mathbf{L}^1}\\
&\lesssim
\langle t-\tau \rangle^{-\frac{1}{4}[2n+1]} \|v(\tau)\|_{\mathbf{L}^2}\|\partial_x v(\tau)\|_{\mathbf{L}^2}\\
&\lesssim
\langle t-\tau \rangle^{-\frac{1}{4}[2n+1]}\langle  \tau \rangle^{-1} \|v\|^2_{\mathbf{X}}.
\end{array}
\end{equation}
As consequence, via Lemma \ref{desigualdad1}  we obtain:
\begin{equation}\label{6.6}
\left\| \partial_x^n \int_0^t\mathcal{G}(t-\tau)\mathcal{N}(v(\tau))d\tau \right\|_{\mathbf{L^2}} \lesssim
\|v\|^2_{\mathbf X}\int_0^\infty  \langle t-\tau \rangle^{-\frac{1}{4}[2n+1]}\langle  \tau \rangle^{-1} d\tau \lesssim \|v\|^2_{\mathbf X} \langle t\rangle^{-\frac{1}{4}[2n+1]}
\end{equation}
thus we conclude the uniform estimate,
\begin{equation}\label{6.10}
\left\| \int_0^t\mathcal{G}(t-\tau)\mathcal{N}(v(\tau))d\tau \right\|_{\mathbf{X}} \lesssim
\|v\|^2_{\mathbf X}. 
\end{equation}
Hence, in view of \eqref{6.3}, \eqref{6.4} and \eqref{6.10}, we deduce the estimate 
$$\|\mathcal{M}v\|\lesssim \|u_0\|_{\mathbf{Z}}+ \|h\|_{\mathbf{Y}} +\|v\|^2_{\mathbf X}  <\rho$$
for $\rho>0,$ small enough. Hence, the mapping $\mathcal M$ transforms the   ball $\mathbf X_\rho$ into itself. In the
same way,  we estimate the difference
$$\|\mathcal{M}v-\mathcal{M}w\|\leq\frac{1}{2}\|v-w\|,$$
which shows that $\mathcal{M}$ is a contraction mapping. Therefore we see that there exists a unique
solution $u\in\mathbf{C}([0,\infty): \mathbf H^1(\mathbb R^+))$.
\end{proof}
In the next proposition we prove that  suitable conditions on the initial  and boundary data, the solution to \eqref{nolineal} belongs to $\mathbf{C}([0,\infty: \mathbf Z)$. The  proof is based in the argument of G. Fonseca and G. Ponce in \cite{FG}. We start it, by defining the truncated weights 
$w_N(x)$ as
\begin{equation}
w_N(x)=\left\{ 
\begin{array}{rcl}
\langle x \rangle, & \text{if } |x|\leq N,\\
2N,& \text{if } |x|\geq  3N,
\end{array}
\right.
\end{equation}
$w_N(x)$ are smooth functions, and they are requested to be non-decreasing in $|x|$ with $w'_N (x)\leq 1 $ for all $x \geq 0$.
In \cite{FG}, it was proved that for any $\theta\in[-1,1],$ and for  any $N\in \mathbb{Z}^+$, $w^{\theta}_{N}(x)$ is a $\theta$-weight, i.e. it satisfies that
\begin{equation}
\sup_{Q \text{ interval }}\left( \frac{1}{|Q|}\int_Q w^{\theta}_N\right)\left(\frac{
1}{|Q|}\int_{Q}w^{-\theta}_N\right)=c(w, \theta)<\infty.
\end{equation}
Moreover, the Hilbert transform $ H$ on the whole line, which is the singular integral defined by
\begin{equation}
   {H}f(x):=\textnormal{PV}\int\limits_{-\infty}^\infty\frac{f(y)}{x-y}dy,\,\,f\in C^{\infty}_0(\mathbb{R}),
\end{equation}
extends to a bounded operator on ${L}^2(w^\theta_N(x)dx),$ for any $\mathbb{N},$ with the operator norm bounded by a constant depending on $\theta,$ but, independent of $N\in \mathbb N$ (see e.g. Proposition 1 in \cite{FG}). Due to the Riesz-Kolmogorov theorem, $H$ extends to a bounded operator on $\mathbf L^p(\mathbb{R}),$ for all $1<p<\infty,$ (see e.g. Duoandikoetxea \cite{Duoandikoetxea}).  Observe that the Hilbert transform $\mathcal{H}$ on the half line $\mathbf{R}^{+}$ can be obtained from the Hilbert transform $H$ on the whole line by the identity $\mathcal{H}f=(H\mathcal{R}f)|_{(0,\infty)},$ where $\mathcal{R}$ is the multiplication operator by the characteristic function $1_{(0,\infty)}$ of $\mathbb{R}^+,$  i.e. $\mathcal{R}f:=f\times 1_{(0,\infty)}.$  

\begin{proposition}\label{Prop 2}
For  $u_0\in \mathbf Z, h \in \mathbf Y$,    the solution $u(t,x)\in \mathbf{C}([0,\infty);\mathbf L^{2,\epsilon}(\mathbf{R}^+)).$ 
\end{proposition}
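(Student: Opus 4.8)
The plan is to run a weighted energy estimate in the spirit of Fonseca and Ponce \cite{FG}, using the truncated weights $w_N$ introduced above to keep every weight bounded, deriving a bound on $\|w_N^\epsilon u(t)\|_{\mathbf{L}^2}$ that is \emph{uniform in} $N$, and then letting $N\to\infty$. Since Proposition \ref{Prop 1} already furnishes $u\in\mathbf{C}([0,\infty);\mathbf{H}^1(\mathbb{R}^+))$ with $\sup_{t>0}\|u(t)\|_{\mathbf{H}^1}\lesssim M_1(\|u_0\|_{\mathbf{Z}},\|h\|_{\mathbf{Y}})$, and since $\|w_N^\epsilon u(0)\|_{\mathbf{L}^2}=\|w_N^\epsilon\psi\|_{\mathbf{L}^2}\le\|\psi\|_{\mathbf{L}^{2,\epsilon}}\lesssim\|\psi\|_{\mathbf{Z}^{1+\epsilon,2}}$ uniformly in $N$, it suffices to control the growth of $\tfrac12\frac{d}{dt}\|w_N^\epsilon u\|_{\mathbf{L}^2}^2=-\int_0^\infty w_N^{2\epsilon}u\,\mathcal{H}u_{xx}\,dx-\int_0^\infty w_N^{2\epsilon}u^2u_x\,dx$ by the weighted and unweighted $\mathbf{H}^1$ norms of $u$.

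The dispersive term is the crux. Writing $a=w_N^{2\epsilon}$ and using $\mathcal{H}u_{xx}=\partial_x\mathcal{H}u_x$, I would integrate by parts once and exploit the skew-adjointness of $\mathcal{H}$ (so that the diagonal part $\int a u_x\,\mathcal{H}u_x$ collapses to a commutator), reducing $\int a u\,\mathcal{H}u_{xx}$ to
\begin{equation*}
-\int_0^\infty a'\, u\,\mathcal{H}u_x\,dx+\tfrac12\int_0^\infty u_x\,[\mathcal{H},a]\,u_x\,dx .
\end{equation*}
The second piece is handled by Calderón's commutator theorem: since $|a'|=|(w_N^{2\epsilon})'|\lesssim 1$ uniformly in $N$ (as $w_N'\le1$ and $2\epsilon<1$), one has $\|[\mathcal{H},w_N^{2\epsilon}]\|_{\mathbf{L}^2\to\mathbf{L}^2}\lesssim\|a'\|_{\mathbf{L}^\infty}\lesssim1$ uniformly in $N$, whence this term is $\lesssim\|u_x\|_{\mathbf{L}^2}^2\lesssim\|u\|_{\mathbf{H}^1}^2$. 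For the first piece I would use $|a'|\lesssim w_N^{2\epsilon-1}\lesssim w_N^\epsilon$ (again because $\epsilon<\tfrac12$ forces $w_N^{\epsilon-1}\le1$) together with $\|\mathcal{H}u_x\|_{\mathbf{L}^2}=\|u_x\|_{\mathbf{L}^2}$ to obtain $\lesssim\|w_N^\epsilon u\|_{\mathbf{L}^2}\|u\|_{\mathbf{H}^1}$. The decisive structural fact, and the precise reason the range $\epsilon<\tfrac12$ appears, is exactly the uniform-in-$N$ $A_2$/Calderón boundedness of $\mathcal{H}$ on $\mathbf{L}^2(w_N^{2\epsilon}\,dx)$ recorded before the statement.

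For the nonlinear term I would integrate by parts, $\int_0^\infty a u^2u_x=-\tfrac13\int_0^\infty a' u^3$ (the $x=0$ boundary contribution being a harmless multiple of $h(t)^3$, bounded via $\mathbf{H}^1\hookrightarrow\mathbf{L}^\infty$), and bound it using $|a'|\lesssim w_N^{2\epsilon}$ and $\|u\|_{\mathbf{L}^\infty}\lesssim\|u\|_{\mathbf{H}^1}$ by $\lesssim\|u\|_{\mathbf{H}^1}\|w_N^\epsilon u\|_{\mathbf{L}^2}^2$. Collecting the three estimates and abbreviating $y_N(t)=\|w_N^\epsilon u(t)\|_{\mathbf{L}^2}^2$ yields a differential inequality of the form $y_N'\lesssim\|u\|_{\mathbf{H}^1}^2+\|u\|_{\mathbf{H}^1}\,y_N^{1/2}+\|u\|_{\mathbf{H}^1}\,y_N$, with all implied constants independent of $N$. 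Since $\sup_t\|u\|_{\mathbf{H}^1}\lesssim M_1$, Gronwall's inequality gives $\|w_N^\epsilon u(t)\|_{\mathbf{L}^2}\lesssim M_2 e^{M_3 t}(\|u_0\|_{\mathbf{Z}},\|h\|_{\mathbf{Y}})$ uniformly in $N$; letting $N\to\infty$ by monotone convergence (as $w_N\uparrow\langle\cdot\rangle$) produces $u(t)\in\mathbf{L}^{2,\epsilon}(\mathbb{R}^+)$ with the asserted exponential-in-$t$ bound, and the continuity $t\mapsto u(t)\in\mathbf{L}^{2,\epsilon}$ follows from these uniform bounds together with the $\mathbf{H}^1$-continuity from Proposition \ref{Prop 1}.

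The step I expect to be the main obstacle is the dispersive term, and it is delicate for two reasons. First, one must secure the commutator bound $\|[\mathcal{H},w_N^{2\epsilon}]\|_{\mathbf{L}^2\to\mathbf{L}^2}\lesssim1$ \emph{uniformly in} $N$; this is precisely the Calderón commutator estimate, which for $\epsilon<\tfrac12$ is available through the uniform $A_2$ character of $w_N^{2\epsilon}$. Second, because we work on $(0,\infty)$ rather than on $\mathbb{R}$, the reduction of $\mathcal{H}$ to the whole-line transform via $\mathcal{H}f=(H\mathcal{R}f)|_{(0,\infty)}$ forces the characteristic-function multiplier $\mathcal{R}$ into the computation, and the integration by parts generates a genuinely half-line boundary contribution at $x=0$ involving the trace $h(t)$ and $\mathcal{H}u_x$; absorbing these terms — using $h(0)=0$, the regularity $h\in\mathbf{Z}^{1,1}$, and the already-established $\mathbf{L}^{2,\epsilon}$ control of the boundary operator $\mathcal{B}(t)h$ in Lemma \ref{Lemma 5.4} — is the part with no whole-line analogue and is where the argument of \cite{FG} must be adapted.
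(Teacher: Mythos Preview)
Your overall strategy (truncated weights $w_N$, a weighted $L^2$ energy identity, Calder\'on's commutator, Gronwall, then $N\to\infty$) is exactly right and matches the paper. The gap is in how you handle the inhomogeneous boundary condition, and it is a genuine obstruction, not a detail.

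You run the energy estimate on $u$ itself. The paper does \emph{not}: it first writes $u=v+z$ with $z=\mathcal{B}(t)h$ (so $z(0,t)=h(t)$, $z(x,0)=0$) and $v=u-z$ (so $v(0,t)=0$, $v(x,0)=\psi$, and $v_t+\mathcal{H}v_{xx}+u\partial_x u=0$). The weighted bound for $z$ is exactly Lemma~\ref{Lemma 5.4}; the weighted energy argument is run on $v$, where the boundary data are \emph{homogeneous}. This is not cosmetic. In the paper's decomposition the only surviving boundary contribution from the dispersive term is
\[
A_2=\tfrac12\,v_x(t,0)\,\mathcal{H}(w_N^\theta v)(t,0),
\]
and the evaluation $\mathcal{H}(w_N^\theta v)(0)=\mathrm{PV}\!\int_0^\infty \frac{w_N^\theta(y)v(t,y)}{y}\,dy$ is finite precisely because $v(t,0)=0$ gives $|v(t,y)|\lesssim \sqrt{y}\,\|v\|_{\mathbf H^1}$ near $y=0$. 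If you keep $u$ with $u(0,t)=h(t)\neq 0$, the analogous boundary evaluation diverges logarithmically; and the trace $v_x(t,0)$ is then bounded via the Duhamel representation of $v$ and the linear smoothing estimates of Lemma~\ref{Lemma 5.1}, a step for which you have no substitute.

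A second, related issue: you write $\mathcal{H}u_{xx}=\partial_x\mathcal{H}u_x$ and integrate by parts once. On $(0,\infty)$ this identity is false: because $\mathcal{H}f=(H\mathcal{R}f)|_{(0,\infty)}$ and $\partial_x$ does not commute with the zero-extension $\mathcal{R}$, one has
\[
\mathcal{H}u_{xx}(x)=\partial_x\mathcal{H}u_x(x)+\frac{u_x(0,t)}{x},
\]
so an uncontrolled $\frac{1}{x}$ tail enters your computation. The paper avoids this by never commuting $\partial_x$ through $\mathcal{H}$: it takes the commutator $[w_N^\theta,\mathcal{H}]$ against $\partial_x^2 v$ (this is the \emph{second} Calder\'on commutator, controlled by $\|\partial_x^2 w_N^\theta\|_{L^\infty}$), then expands $w_N^\theta\partial_x^2 v$ by Leibniz into $\partial_x^2(w_N^\theta v)$ plus lower-order pieces. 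The skew-adjointness is applied to the whole-line extension of $w_N^\theta v$, which is continuous since $v(0,t)=0$.

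In short: you have correctly identified Lemma~\ref{Lemma 5.4} as the tool for the boundary data, but you should use it \emph{before} the energy estimate, by subtracting $z=\mathcal{B}(t)h$ and running your argument on $v=u-z$ with $v(0,t)=0$. With that change your outline is essentially the paper's proof; without it, the boundary terms at $x=0$ are not under control with the available regularity.
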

\begin{proof}
Let $u=v+z$ be such that $z$ satisfies 
\begin{equation} 
\begin{cases}
z_{t}+\mathcal{H}z_{xx}=0, \\
z(x,0)=0,\\
z(0,t)=h(t),\ t > 0, & \textnormal{ } \end{cases}
\end{equation}
and $w$ solves
\begin{equation} \label{nolineal 2}\begin{cases}
v_{t}+\mathcal{H}v_{xx}+u\partial_{x}u=0,\\
v(x,0)=\psi(x),\\
v(0,t)=0,\ & \textnormal{ } \end{cases}
\end{equation}
In Lemma \ref{Lemma 5.4} we had proved
\begin{equation}\label{boundforz(t)}
\sup \limits_{0\leq t\leq T}\|z(t)\|_{\mathbf L^{2,\epsilon}}\lesssim \|h\|_{\mathbf{Y}}.
\end{equation}
By multiplying the differential equation \eqref{nolineal 2} by $w^{2\theta}_Nv$ with $0<\theta\leq 1$ and integrating it on $[0,\infty)$ we have,
\begin{equation}
\frac{1}{2}\frac{d}{dt}\int\limits_0^\infty \left(w^{\theta}_N v \right)^2 dx+\int\limits_0^\infty w^{\theta}_N\mathcal{H}\partial_x^2 v  w^{\theta}_N v dx +\int\limits_0^\infty w^{2\theta}_N vu\partial_x u dx=0.
\end{equation}The previous identity implies that
\begin{align*}
  \frac{d}{dt} \Vert v w_{N}^\theta\Vert^2_{\mathbf{L}^{2}(\mathbf R^+)}\leq   |I'(t)|+|II'(t)|,\quad 
\end{align*}
and consequently 
\begin{align*}
   \Vert v w_{N}^\theta\Vert^2_{\mathbf{L}^{2}(\mathbf R^+)}\leq   \Vert xu_0\Vert^2_{\mathbf{L}^{2}(\mathbf R^+)} +|I|+|II|,\quad 
\end{align*}
with
\begin{align*}
I\equiv I(t):=\int_0^t\left( \int\limits_0^\infty \left( w^{\theta}_N\mathcal{H}\partial_x^2 v  w^{\theta}_N v \right)dx \right)dt ,\,\quad
 II\equiv II(t):=\int_0^t\left( \int\limits_0^\infty\left( w^{2\theta}_N vu\partial_x u \right)dx\right)dt.
\end{align*}
\begin{description}

\item \textbf{Estimation of $I$.} Let us use the Calder\'on commutator technique, for this we denote $\tilde{v}:=\mathbb{E}(v),$ where $\mathbb{E}$ is the  the extension operator by zero  to $\mathbb{R}^{-}_0:=(-\infty,0],$ (this means, $\tilde{v}(x)=v(x),$ for $x>0$ and when $x\leq 0,$ $\tilde{v}(x)=0$). Then we have
\begin{align*}
  & \int\limits_0^\infty w^{\theta}_N\mathcal{H}\partial_x^2 v  w^{\theta}_N v dx \\
  &=\int\limits_0^\infty w^{\theta}_N\mathcal{H}\partial_x^2 v  w^{\theta}_N v dx-\int\limits_0^\infty \mathcal{H}(w_{N}^\theta \partial_x^2 v) ( w^{\theta}_N v) dx +\int\limits_0^\infty \mathcal{H}(w_{N}^\theta \partial_x^2 v) ( w^{\theta}_N v)dx\\
   &=\int\limits_0^\infty [w^{\theta}_N,\mathcal{H}]\partial_x^2 v \times  w^{\theta}_N v dx+\int\limits_0^\infty \mathcal{H}(w_{N}^\theta \partial_x^2 v) ( w^{\theta}_N v)dx=A_{1}+III,
\end{align*} with $$  A_{1}:=\int\limits_0^\infty [w^{\theta}_N,\mathcal{H}]\partial_x^2 v \times  w^{\theta}_N v dx=\int\limits_{-\infty}^\infty [w^{\theta}_N,{H}](\partial_x^2  \tilde{v}) \times  w^{\theta}_N \tilde{v} dx,$$
where the derivative $\partial_x^2  \tilde{v}$ is understood in the sense of distributions. From \cite{Calderon,Dawson},  Calder\'on's commutator theorem allows us to estimate $[w^{\theta}_N,\mathcal{H}]$ as $$\Vert [w^{\theta}_N,H]\tilde{v}\Vert_{\mathbf L^{2}(\mathbb{R})}\lesssim \Vert \partial_{x}^2 w^{\theta}_N  \Vert_{\mathbf  L^\infty(\mathbb{R})}\Vert w^{\theta}_N \tilde{v}  \Vert_{\mathbf L^2(\mathbb{R})}, $$ 
and consequently,
\begin{align*}
    |A_1| &\lesssim \Vert [w^{\theta}_N,H]v\Vert_{\mathbf  L^{2}(\mathbb{R})}\Vert w^{\theta}_N \tilde{v}  \Vert_{\mathbf   L^2(\mathbb{R})}\lesssim  \Vert \partial_{x}^2 w^{\theta}_N  \Vert_{\mathbf  L^\infty(\mathbf R^+)}\Vert w^{\theta}_N \tilde{v}  \Vert_{\mathbf  L^2(\mathbb{R})}\Vert w^{\theta}_N {v}  \Vert_{\mathbf  L^2(\mathbf{R}^{+})}\\
    &=\Vert \partial_{x}^2 w^{\theta}_N  \Vert_{\mathbf  L^\infty(\mathbf R^+)}\Vert w^{\theta}_N {v}  \Vert^2_{\mathbf  L^2(\mathbf{R}^{+})}.
\end{align*} Now, let us estimate $III$ as follows. Observing that
\begin{align*}
    &\int\limits_0^\infty \mathcal{H}(w_{N}^\theta \partial_x^2 v) ( w^{\theta}_N v)dx &\\
    &=\int\limits_0^\infty \mathcal{H}(w_{N}^\theta \partial_x^2 v-\partial_{x}^2(w_{N}^\theta v)) ( w^{\theta}_N v)dx+\int_{0}^\infty \mathcal{H}\partial_{x}^2(w_{N}^\theta v)( w^{\theta}_N v)dx\\
    &=\int\limits_0^\infty \mathcal{H}(w_{N}^\theta \partial_x^2 v) ( w^{\theta}_N v)dx   -\int\limits_0^\infty  \mathcal{H}\partial_{x}^2(w_{N}^\theta v) ( w^{\theta}_N v)dx+\int_{0}^\infty \mathcal{H}\partial_{x}^2(w_{N}^\theta v)( w^{\theta}_N v)dx\\
     &=\int\limits_0^\infty \mathcal{H}(w_{N}^\theta \partial_x^2 v) ( w^{\theta}_N v)dx   -\int\limits_0^\infty  \mathcal{H}((\partial_{x}^2w_{N}^\theta)\times v) ( w^{\theta}_N v)dx\\
     &-2\int_{0}^{\infty}\mathcal{H}(\partial_x w_{N}^\theta \partial_x v) w_{N}^\theta  vdx-\int\limits_0^\infty \mathcal{H}(w_{N}^\theta \partial_x^2 v) ( w^{\theta}_N v)dx  +\int_{0}^\infty \mathcal{H}\partial_{x}^2(w_{N}^\theta v)( w^{\theta}_N v)dx\\
     &=  A_2 -\int\limits_0^\infty  \mathcal{H}((\partial_{x}^2w_{N}^\theta)\times v) ( w^{\theta}_N v)dx-2\int_{0}^{\infty}\mathcal{H}(\partial_x w_{N}^\theta \partial_x v)w_{N}^\theta  v dx =A_{2}+A_{3}+A_4 ,
\end{align*} where $A_2:=\int_{0}^\infty  \mathcal{H}\partial_{x}^2(w_{N}^\theta v)( w^{\theta}_N v)dx$ and $A_4:=-2\int_{0}^{\infty}\partial_x w_{N}^\theta \partial_x v dx.$ Let us study the terms $A_2,A_3$ and $A_4$ as follows.
\begin{itemize}
    \item[(i).] {\bf Estimating $A_2.$} Indeed,  since $H^{*}=-H,$ (see e.g. Duoandikoetxea \cite{Duoandikoetxea}), we obtain
\begin{align*}
 A_2:&=\int_{0}^\infty  \mathcal{H}\partial_{x}^2(w_{N}^\theta v)( w^{\theta}_N v)dx=\int\limits_{-\infty}^\infty  {H}\partial_{x}^2(w_{N}^\theta \tilde{v})( w^{\theta}_N \tilde{v})dx\\
 & =-\int\limits_{-\infty}^\infty  \partial_{x}^2(w_{N}^\theta \tilde{v})H( w^{\theta}_N \tilde{v})dx=
 -\int\limits_{0}^\infty  \partial_{x}^2(w_{N}^\theta v )\mathcal H( w^{\theta}_N v)dx,
 \end{align*}and consequently, since $v(t,0)=0$ via integration by parts we conclude that
 \begin{align}
 A_2=\frac{1}{2}v_{x}(t,0)\mathcal H(w_{N}^\theta v)(t,0). \label{6.18}
\end{align}
Note that $|w^{\theta}_N v(y)| \lesssim (1+y^\theta) |v(t,y)|.$ Therefore, 
\begin{equation}
\begin{array}{rl}
|\mathcal H(w_{N}^\theta v)(t,0)|&\hspace{-0.2cm} \displaystyle \lesssim \lim \limits_{\epsilon\to 0}\int \limits_\epsilon^\infty
\frac{(1+y^\theta) |v(t,y)|}{y}dy\\
&\hspace{-0.2cm} \displaystyle \lesssim \lim \limits_{\epsilon\to 0}\left( \int \limits_\epsilon^1
\frac{ |v(t,y)|}{y}dy + \int \limits_1^\infty
\frac{ |v(t,y)|}{y^{1-\theta}}dy \right)\\
&\hspace{-0.2cm} \displaystyle \lesssim \lim \limits_{\epsilon\to 0}\left(\|v(t)\|_{\mathbf{H}^1} \int \limits_\epsilon^1
\frac{ 1}{\sqrt{y}}dy +\|v(t)\|_{\mathbf{L}^2} \int \limits_1^\infty
\frac{ 1}{y^{2(1-\theta)}}dy \right)\\
&\hspace{-0.2cm} \displaystyle \lesssim \|v(t)\|_{\mathbf{H}^1} .
\end{array}
\end{equation}
On the other hand, since Duhamel's principle  gives us an integral representation of $v$, indeed
$$ v(t)=\mathcal{G}(t)u_0-\int_0^t\mathcal{G}(t-\tau)\mathcal{N}(u(\tau))d\tau,$$
by  combining this integral representation with  Lemma  \ref{Lemma 5.1}, and the fact that $\|N(u)\|_{\mathbf{L}^2}\lesssim \|u\|_{\mathbf{L}^\infty}\|\partial_x u\|_{\mathbf{L}^2}\lesssim \|u\|_{\mathbf H^1}\leq C$, where  $C$ is a constant  depending on both the initial and boundary data depend, we have the estimate
\begin{equation}
\begin{array}{rl}\label{6.20}
|v_{x}(t,0)|\lesssim \|v_x(t,x)\|_{\mathbf L^\infty} \lesssim \displaystyle C\left( t^{-\frac{3}{4}}+ \int_0^t (t-\tau)^{\frac{3}{4}}d\tau\right)\lesssim C\{ t\}^{-\frac{3}{4}}\langle t \rangle ^{\frac{1}{4}}.
\end{array}
\end{equation}
From \eqref{6.18}-\eqref{6.20} we have
\begin{equation}
|A_2|\lesssim C\{ t\}^{-\frac{3}{4}}\langle t \rangle ^{\frac{1}{4}}.
\end{equation}

\item[(ii).] {\bf Estimating $A_3.$} Because  $$A_3=-\int\limits_0^\infty  \mathcal{H}((\partial_{x}^2w_{N}^\theta)\times v) ( w^{\theta}_N v)dx=-\int\limits_{-\infty}^\infty  {H}((\partial_{x}^2w_{N}^\theta)\times \tilde{v}) ( w^{\theta}_N \tilde{v})dx,$$  from the $\mathbf L^2$-boundedness of the Hilbert transform we obtain,
\begin{align*}
    |A_3|\lesssim \int\limits_{-\infty}^\infty|(\partial_{x}^2w_{N}^\theta)\times v||w^{\theta}_N \tilde{v})|dx\lesssim \Vert \tilde{v}\Vert_{ \mathbf L^2(\mathbb{R})}\Vert w^{\theta}_N \tilde{v}\Vert_{\mathbf{L}^{2}(\mathbb{R})}\lesssim  \Vert w^{\theta}_N {v}\Vert_{\mathbf{L}^{2}(\mathbf R^+)}^2. 
\end{align*}
\item[(iii).] {\bf Estimating $A_4.$} Finally, using Ponce and Fonseca  \cite[Page 445]{FG}, we have the following estimate for the non-linear term $A_4.$ Indeed,
\begin{align*}
    |A_4|&\lesssim \int_{-\infty}^{\infty}|\mathcal{H}(\partial_x w_{N}^\theta \partial_x \tilde{v})w_{N}^\theta  \tilde{v}|dx \\
    &\lesssim \| \tilde{v}  \|^2_{\mathbf H^1(\mathbb{R})}+\|w_{N}^\theta  \tilde{v}\|^2_{\mathbf L^2(\mathbb{R})}= \| {v}  \|^2_{\textbf{H}^1(\mathbf R^+)}+\|w_{N}^\theta  {v}\|^2_{\textbf{L}^2(\mathbf R^+)}.
\end{align*} 
\end{itemize}
\item \textbf{Estimation of $II$.} Note, we can rewrite $II$ since
$$II=\int_0^\infty w_{N}^{2\theta} u^2\partial_x u dx -\int_0^\infty w_{N}^{2\theta} zu\partial_xu dx=II_1 +II_2$$
From, (3.2) in Ponce and Fonseca  \cite[Page 445]{FG}, we have
\begin{align}\label{6.23}
     |II_1|\leq  \int\limits_{-\infty}^\infty |w^{2\theta}_N \tilde{u}^2\partial_x \tilde{u}| dx\lesssim \Vert \langle x\rangle \tilde{u}\Vert_{L^{2}(\mathbb{R})}\|w_{N}^\theta  \tilde{u}\|_{L^2(\mathbb{R})}=\Vert \langle x\rangle u\Vert_{\textbf{L}^{2}(\mathbf R^+)}\|w_{N}^\theta  {u}\|_{\textbf{L}^2(\mathbf R^+)}.
\end{align}
Moreover via H\"older inequality we get 
\begin{align} \label{6.24}
     |II_2|\leq \|u\|_{\mathbf L^2(\mathbb R^+)}^2 \|w_N^{2\theta}z\|_{\mathbf L^2} \leq \|u\|_{\mathbf L^2(\mathbb R^+)}^2 \|z\|_{\mathbf L^{2,1}}.
\end{align}
From \eqref{6.23} and \eqref{6.24}, we deduce the estimate
$$|II|\lesssim \|u\|_{\mathbf L^2(\mathbb R^+)}^2 \|z\|_{\mathbf L^{2,1}}+ \Vert w^{\theta}_N {v}  \Vert^2_{\mathbf L^2(\mathbf{R}^{+})} +\Vert z \Vert^2_{\mathbf L^{2,1}(\mathbf{R}^{+})} .$$
\end{description}
Summarising, we have proved that
\begin{align*}
 \frac{d}{dt} \Vert v w_{N}^\theta\Vert^2_{\mathbf{L}^{2}(\mathbf R^+)}\lesssim C(t)+\Vert w^{\theta}_N {v}  \Vert^{2}_{L^2(\mathbf{R}^{+})}+\| {v}  \|^2_{\textbf{H}^1(\mathbf R^+)}.
\end{align*} Thus,  Gronwall's inequality implies that\footnote{Indeed, for $\varkappa(t):=\Vert v w_{N}^\theta\Vert^2_{\mathbf{L}^{2}(\mathbf R^+)},$ we have proved $\varkappa'(t)\lesssim C(t)+\varkappa(t)+\| {v}  \|^2_{\textbf{H}^1(\mathbf R^+)},$ from which we deduce that $\varkappa\in C^1([0,T]),$ and by taking $M=\sup_{0\leq t\leq T}\varkappa(t),$ we have $\varkappa(t)\leq C(t)+M+\| {v}  \|^2_{\textbf{H}^1(\mathbf R^+)}.$  After of applying Gronwall's inequality  we obtain $\varkappa(t)\lesssim_{T} Ce^{\int_{0}^{T}M}=Ce^{MT},$ for all $T>0,$ with $C\equiv C\left(T,\| {v}  \|^2_{\textbf{H}^1(\mathbf R^+)}\right).$  }  we have
\begin{equation*}
    \sup_{t\in [0,T]}\Vert w^{\theta}_N {v}(t,\cdot)  \Vert^2_{L^2(\mathbf{R}^{+})}\leq C e^{MT},
\end{equation*} where $M=\sup_{t\in [0,T]}\Vert \langle x\rangle v(t,\cdot)\Vert^2_{\textbf{L}^{2}(\mathbf R^+)},$ $C$ is a constant depending on both, the initial and the boundary condition. This proves that $v\in \mathbf L^\infty([0,T], \mathbf{L}^{2,1}(\mathbf R^+))$ and the previous estimate \eqref{boundforz(t)} implies that  $u\in \mathbf L^\infty([0,T], \mathbf{L}^{2,1}(\mathbf R^+)).$
\end{proof}

\textbf{Acknowledgements:} {
 D. C. was supported by the FWO Odysseus 1 grant G.0H94.18N: Analysis and Partial Differential Equations, L. E.  was supported by  Gran Sasso Science Institute.
}

\end{document}